\documentclass[a4paper, reqno, 11pt]{amsart}
\usepackage{amsmath}
\usepackage{amssymb}
\usepackage{cases}
\usepackage[all]{xy}
\usepackage{booktabs}
\allowdisplaybreaks[4]
\usepackage{enumerate}
\usepackage{array, longtable}
\usepackage{caption}


\newtheorem{thm}{Theorem}[section]
\newtheorem{prop}[thm]{Proposition}

\newtheorem{lem}[thm]{Lemma}

\newtheorem{cor}[thm]{Corollary}

\newtheorem{notation}[thm]{Notation}

\theoremstyle{definition}
\newtheorem{definition}[thm]{Definition}
\newtheorem{example}[thm]{Example}

\theoremstyle{remark}
\newtheorem{remark}[thm]{Remark}

\numberwithin{equation}{section}

\newcommand{\bQ}{\mathbb{Q}}

\newcommand{\bP}{\mathbb{P}}

\newcommand\OO{{\mathcal{O}}}

\newcommand{\roundup}[1]{\lceil{#1}\rceil}


\newcommand\lcm{{\text{lcm}}}

\newcommand{\Nklt}{\operatorname{Nklt}}

\newcommand{\Mov}{\operatorname{Mov}}

\newcolumntype{C}{>{$}c<{$}}
\newcolumntype{L}{>{$}l<{$}}

\begin{document}

\title{On the anti-canonical geometry of weak $\mathbb{Q}$-Fano threefolds, III}
\date{\today}

\author{Chen Jiang}
\address{Chen Jiang, Shanghai Center for Mathematical Sciences \& School of Mathematical Sciences, Fudan University, Shanghai, 200438, China}
\email{chenjiang@fudan.edu.cn}

\author{Yu Zou}
\address{Yu Zou, Yau Mathematical Sciences Center, Tsinghua University, Beijing, 100084, China}
\email{fishlinazy@tsinghua.edu.cn}

\keywords{weak $\mathbb{Q}$-Fano $3$-fold, anti-pluricanonical map, birationality}
\subjclass[2020]{14J45, 14J30, 14C20, 14E05}
\maketitle
\pagestyle{myheadings} \markboth{\hfill C.~Jiang and Y.~Zou
\hfill}{\hfill The anti-canonical geometry of Fano $3$-folds\hfill}

\begin{abstract}
For a terminal weak $\bQ$-Fano $3$-fold $X$, we show that the $m$-th anti-canonical map defined by $|-mK_X|$ is birational for all $m\geq59$.
\end{abstract}

\section{Introduction}
Throughout this paper, we work over an algebraically closed field of characteristic $0$ (for instance, the complex number field $\mathbb{C}$). We adopt standard notation in \cite{KM}.

A normal projective variety $X$ is called a {\it weak $\bQ$-Fano variety} (resp. {\it $\bQ$-Fano variety}) if $-K_X$ is nef and big (resp. ample). 
According to the minimal model program, (weak) $\bQ$-Fano varieties form a fundamental class in birational geometry.
Motivated by the classification theory of $3$-dimensional algebraic varieties, we are interested in the study of explicit geometry of (weak) $\bQ$-Fano varieties with terminal or canonical singularities. In this direction there are a lot of works in the literature, see for example
 \cite{Prok05, Prok07, CC08, Prok10, Chen11, Prok13, CJ16, CJ20, JZ21, Jiang21, Jiang22}.

Given a terminal weak $\mathbb{Q}$-Fano $3$-fold $X$, the {\it $m$-th anti-canonical map} $\varphi_{-m, X}$ (or simply $\varphi_{-m}$) is the rational map induced by the linear system $|-mK_X|$.
We are interested in the fundamental question of finding an optimal integer $c_3$ such that $\varphi_{-m}$ is birational for all $m\geq c_3$. The existence of such $c_3$ follows from the boundedness result in \cite{KMMT}. More generally, Birkar \cite{bir19} showed that, fix a positive integer $d$, there exists a positive integer $c_d$ such that 
$\varphi_{-m}$ is birational for all $m\geq c_d$ and for all terminal weak $\mathbb{Q}$-Fano $d$-folds, which is one important step towards the solution of the Borisov--Alexeev--Borisov conjecture.
The following example shows that $c_3\geq 33$.

\begin{example}[{\cite[List~16.6, No.~95]{IF00}}]\label{ex33}
A general weighted hypersurface $X_{66}\subset\mathbb{P}(1, 5, 6,$ $22, 33)$ is a $\mathbb{Q}$-factorial terminal $\mathbb{Q}$-Fano $3$-fold of Picard number $1$ with $\varphi_{-m}$ birational for $m\geq 33$ but $\varphi_{-32}$ not birational. 
\end{example}

In \cite{CJ16}, it was showed that for a terminal weak $\mathbb{Q}$-Fano $3$-fold $X$, $\varphi_{-m}$ is birational for all $m\geq 97$, which seems far from being optimal comparing to Example~\ref{ex33}. Later in \cite{CJ20}, it was showed that any terminal weak $\mathbb{Q}$-Fano $3$-fold is birational to some terminal weak $\mathbb{Q}$-Fano $3$-fold $Y$ such that $\varphi_{-m, Y}$ is birational for all $m\geq 52$.
Moreover, in recent works \cite{Jiang21, Jiang22}, we can make use of the behavior of the pluri-anti-canonical maps studied in \cite{CJ16} in the classification of terminal $\mathbb{Q}$-Fano $3$-folds.
So we believe that a better understanding of the behavior of the pluri-anti-canonical maps (including new methods developed during the approach) will help us understand the classification of terminal $\mathbb{Q}$-Fano $3$-folds better.

The main goal of this paper is to give an improvement of \cite{CJ16, CJ20} without passing to a birational model. The main theorem of this paper is the following.

\begin{thm}\label{thm1}
Let $X$ be a terminal weak $\bQ$-Fano $3$-fold. Then the $m$-th anti-canonical map $\varphi_{-m}$ defined by $|-mK_X|$ is birational for all $m\geq59$.
\end{thm}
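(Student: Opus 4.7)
The plan is to follow the Chen--Chen / Chen--Jiang framework for anti-pluricanonical birationality, refining the arguments of \cite{CJ16, CJ20} so that we do not need to replace $X$ by a birational model as in the latter paper. The starting point is to pick the smallest integer $m_1$ for which the movable part $M_{m_1}$ of $|-m_1 K_X|$ defines a map whose image has dimension $\geq 2$, and then take a general member of $|M_{m_1}|$ on a suitable log resolution $\pi\colon \tilde X \to X$. Call its image on $\tilde X$ a surface $S$. The goal is to write, for each target $m \geq 59$, the divisor $-mK_{\tilde X} - S$ as the sum of a nef and big $\mathbb{Q}$-divisor plus a boundary with simple normal crossings support and small coefficients, so that Kawamata--Viehweg vanishing gives the surjection $H^0(\tilde X, -mK_{\tilde X}) \twoheadrightarrow H^0(S, (-mK_{\tilde X})|_S)$ and reduces birationality of $\varphi_{-m}$ to the separation of very general points of $\tilde X$ by $|-mK_X|$ restricted to $S$.

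First, I would collect the inputs: Reid's Riemann--Roch formula expressing $h^0(-mK_X)$ in terms of $(-K_X)^3$ and the basket $B_X$ of terminal singularities; the finiteness and classification results for baskets $B_X$ with prescribed small values of $h^0(-mK_X)$ from \cite{CC08, CJ16, CJ20}; and the Kawamata--Miyaoka-type numerical inequalities relating $(-K_X)^3$ and $B_X$. Using these, I would split the argument according to $p_{-m} := h^0(-mK_X)$ for small $m$, and in particular according to $m_1$. The easy cases are those where $-K_X$ itself (or $-2K_X$, $-3K_X$) has $p_{-m} \geq 4$ and already defines a generically finite map: these are handled by the classical "two steps down" argument, cutting by two general members to land on a curve.

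The technical core is the restriction to $S$. After minimalizing $S$, one uses that $(-K_X)|_S$, suitably adjusted, compares with $K_S$ or with a canonical class on a Del~Pezzo-like surface, so that birationality on $S$ reduces to separation of points by a divisor of the form $\alpha \,(-K_X)|_S$ for an explicit $\alpha = \alpha(m,m_1)$. One then cuts $S$ by a further general member of an appropriate system and reduces to a curve $C$, where birationality amounts to the degree bound $\deg\bigl((-mK_X)|_C\bigr) > 2g(C)$. The bookkeeping is set up so that every occurrence of $m$ in the resulting inequality can be converted, via the Riemann--Roch estimates and the basket classification, into a numerical condition on $(-K_X)^3$ and $B_X$ alone.

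The main obstacle will be the low-volume regime, where $(-K_X)^3$ is very small and the basket $B_X$ is forced to lie in a short explicit list, so that $p_{-m}$ grows slowly and $m_1$ is large. Exactly here the bound $52$ in \cite{CJ20} relied on passing to a different weak $\mathbb{Q}$-Fano model $Y$ to gain divisibility in the index and positivity from a small MMP; without that freedom, one must squeeze sharper inequalities from the Riemann--Roch calculation, possibly by using several values of $m$ simultaneously to rule out bad baskets, and by a direct case analysis of the remaining finitely many $(B_X, (-K_X)^3)$ pairs. The improvement from $97$ to $59$ should come from replacing rough estimates by this case-by-case optimization and from a slightly more efficient choice of the auxiliary surface $S$ and curve $C$ above.
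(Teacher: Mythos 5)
Your overall framework (restrict to a surface $S$ coming from the movable part of some $|-m_1K_X|$, descend to a curve, and convert everything into numerical conditions on $(-K_X)^3$ and the basket via Reid's Riemann--Roch) is indeed the skeleton the paper uses; that part is inherited from \cite{CJ16, CJ20} and is not where the difficulty lies. The genuine gap is in how you propose to bound $m_1$, i.e.\ to certify that $|-m_1K_X|$ is \emph{not composed with a pencil}. You attribute the improvement from $97$ to $59$ to ``case-by-case optimization'' of the Riemann--Roch estimates and a ``more efficient choice'' of $S$ and $C$. This cannot work. The only non-pencil criterion available without leaving $X$ was $P_{-m}>r_X(-K_X^3)m+1$ (\cite[Corollary~4.2]{CJ16}), and when $r_X(-K_X^3)$ is large this is hopeless no matter how sharply you estimate $P_{-m}$: for instance, for $B_X=\{2\times(1,2),(2,5),(3,7),(4,9)\}$ with $-K_X^3=\frac{43}{315}$ it only yields non-pencil for $m\geq 61$, already past the target bound, and the Riemann--Roch computation of $P_{-m}$ there is essentially exact, so there is no slack to ``squeeze.'' Using several values of $m$ simultaneously to rule out baskets does not help either, since the problematic baskets are perfectly consistent.

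The missing idea is a \emph{new, uniform} non-pencil criterion: $P_{-m}>12m+1$ implies $|-mK_X|$ is not composed with a pencil (Theorem~\ref{npthm}). Its proof does require leaving $X$ --- but only inside the proof of the criterion, not in the statement: one runs a $K$-MMP on a $\bQ$-factorialization to reach a Mori fiber space $g:Y\to S$ (Proposition~\ref{prop MFS}), pushes the pencil decomposition $\pi^*(-mK_X)\sim(P_{-m}-1)F_W+E$ forward to $Y$, and then bounds the coefficient $\omega=\frac{P_{-m}-1}{m}$ of a movable divisor inside $-K_Y$ separately in the three cases: by Prokhorov's classification of large $\bQ$-Fano index when $S$ is a point ($\omega\leq 7$), by a connectedness/non-klt argument on the del Pezzo fibers when $S=\bP^1$ ($\omega\leq 12$), and by an intersection computation on a pullback surface when $S$ is a del Pezzo surface ($\omega\leq 12$). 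Without this criterion (or something equivalent), the low-index/large-$r_X(-K_X^3)$ baskets block any purely numerical refinement of the old inequality, and your proposal does not reach $m\geq 59$.
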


\begin{remark}
Theorem~\ref{thm1} holds for canonical weak $\bQ$-Fano $3$-folds by taking a $\bQ$-factorial terminalization by \cite[Theorem~6.23, Theorem~6.25]{KM}. 
\end{remark}

For terminal $\bQ$-Fano $3$-folds, we have a slightly better bound.
\begin{thm}\label{thm2}
Let $X$ be a terminal $\bQ$-Fano $3$-fold. Then the $m$-th anti-canonical map $\varphi_{-m}$ defined by $|-mK_X|$ is birational for all $m\geq58$.
\end{thm}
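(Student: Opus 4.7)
The plan is to mimic the proof of Theorem~\ref{thm1}, gaining one unit in the bound by exploiting the ampleness of $-K_X$ at the threshold cases.

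First I would set up the standard machinery of \cite{CJ16, CJ20}: fix the basket $B_X$ of virtual terminal quotient singularities supplied by Reid's basket formalism, and use Reid's Riemann--Roch formula for $\chi(\OO_X(-mK_X))$ together with Kawamata--Viehweg vanishing (which applies since $-K_X$ is nef and big, in particular when $-K_X$ is ample) to obtain $h^0(-mK_X)=\chi(\OO_X(-mK_X))$ for all $m\geq 1$. This pins down $h^0(-mK_X)$ and $(-K_X)^3$ numerically in terms of $B_X$ and $\chi(\OO_X)=1$.

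Next I would run the standard birationality machine. Take two general points $x_1,x_2\in X$, choose $m_0$ so that the movable part of $|-m_0K_X|$ generates, after a suitable log-resolution, a covering family $\{S\}$ of prime divisors through a general point of $X$, and restrict $|-mK_X|$ to a general member $S$. On $S$ one reduces, by another covering family argument, to separating two general points by a curve $C$ in the restricted linear system, and then on $C$ to a degree estimate handled by Kawamata--Viehweg extension. Feeding in the explicit numerical input from Reid's RR and the list of admissible baskets, the chain of inequalities isolates a short list of borderline basket/volume profiles that force the bound $m\geq 59$ in Theorem~\ref{thm1}.

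The upgrade from $59$ to $58$ under ampleness should then come from inspecting this short list and showing that in every borderline case either (i) no terminal $\bQ$-Fano threefold with that basket can have $-K_X$ ample, so the case is vacuous, or (ii) ampleness of $-K_X$ strengthens the key intersection inequality on $S$ or on $C$ by the small margin needed to drop the threshold by one. I expect the main obstacle to be the finite but delicate enumeration of these borderline baskets from the tables in \cite{CJ20}, and the case-by-case verification that each is either incompatible with ampleness or admits the required sharpening; this should be feasible using the explicit classification of small-degree terminal $\bQ$-Fano threefolds found in \cite{Prok10, Prok13} together with the fact that ampleness of $-K_X$ forbids certain extremal rays and hence certain basket/volume combinations that are permissible in the merely weak setting.
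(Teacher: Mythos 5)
Your plan correctly predicts the overall shape (re-run the proof of Theorem~\ref{thm1} and squeeze out one unit at the borderline case using ampleness), but it is missing the one concrete idea that makes the improvement work, and neither of the two mechanisms you propose for exploiting ampleness is the right one. From the proof of Theorem~\ref{thm1}, the unique obstruction to birationality of $\varphi_{-58}$ is the single basket $B_X=\{(1,2),2\times(1,3),(8,17)\}$ with $P_{-1}=0$ (No.~13 of Table~\ref{tab1} in Theorem~\ref{case2}), and the obstruction there is not a failure of the separation inequalities on $S$ or $C$: it is only that one cannot rule out that $|-24K_X|$ is composed with a pencil, which forces $m_1=25$ rather than $24$ and hence the bound $59$ rather than $58$. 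Ampleness does not make this basket vacuous, nor does it sharpen the intersection estimates inside the birationality criteria; what it does is upgrade the non-pencil criterion of Proposition~\ref{np1} in its \emph{equality} case. Here $r_X(-K_X^3)=7$ and \eqref{RR for P} gives $P_{-24}=169=7\cdot 24+1$, exactly the boundary of Proposition~\ref{np1}. Proposition~\ref{np1 equality} shows that if $|-24K_X|$ were composed with a pencil then equality would force $(\pi^*(-K_X)^2\cdot F)=0$ for the fixed part $F$, whence $F$ is $\pi$-exceptional \emph{because $-K_X$ is ample} (the only place ampleness enters), so $-K_X\sim_{\bQ} r_X(-K_X^3)\,\pi_*S$; combined with $(\pi_*S)^3\geq 1/r_X$ this yields $-K_X^3\geq (r_X(-K_X^3))^3/r_X$ and hence $r_X(-K_X^3)=1$, contradicting $r_X(-K_X^3)=7$. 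Therefore $|-24K_X|$ is not composed with a pencil and the ``moreover'' clause of Theorem~\ref{case2} gives birationality of $\varphi_{-58}$.

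Without this equality-case analysis your proposal does not close. Your option (i) fails because the basket $\{(1,2),2\times(1,3),(8,17)\}$ is not incompatible with ampleness, and the classifications in \cite{Prok10, Prok13} concern large Fano index or large degree and say nothing that excludes it; your option (ii) is aimed at the wrong stage of the argument, since the loss of one unit comes entirely from the value of $m_1$ (the non-pencil step), not from the final separation of points on $S$ or on $C$. The missing ingredient is precisely Proposition~\ref{np1 equality}, and you would need to supply its proof --- or an equivalent use of ampleness to force the fixed part to be exceptional --- to complete the argument.
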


To prove the main theorem, we already have several criteria to determine the birationality in \cite{CJ16, CJ20}, which are optimal in many cases (cf. \cite[Example~5.12]{CJ16}).
In order to study the birationality of $|-mK_X|$, 
as indicated in \cite{Chen11, CJ16, CJ20}, it is crucial to study when 
$|-mK_X|$ is not composed with a pencil. In fact, finding a criterion for 
$|-mK_X|$ not composed with a pencil is one of the central problems in \cite{CJ16, CJ20} (\cite[Problem~1.3]{CJ16}, \cite[Problem~1.5]{CJ20}). Comparing to the birationality criteria, the non-pencil criteria in \cite{CJ16, CJ20} are not satisfactory.
As one of the main ingredients of this paper, we give a new criterion for 
$|-mK_X|$ not composed with a pencil.

\begin{thm}[{=Theorem~\ref{npthm}}]
Let $X$ be a terminal weak $\bQ$-Fano $3$-fold. If 
$$
h^0(X,-mK_X)>12m+1
$$
for some positive integer $m$, then $|-mK_X|$ is not composed with a pencil.
\end{thm}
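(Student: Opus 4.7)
I argue by contradiction. Suppose $|-mK_X|$ is composed with a pencil; the plan is to deduce $h^0(X,-mK_X)\leq 12m+1$.

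Take a common log-resolution $\pi\colon W\to X$ on which the movable part of $|\pi^*(-mK_X)|$ is base-point-free, defining a morphism $f\colon W\to C$ to a smooth curve. Write $\pi^*(-mK_X)\sim aF+Z$ with $F$ a general fiber of $f$ and $Z\geq 0$ fixed. Two general fibers of $f$ are disjoint, hence $F^2\equiv 0$ and $F|_F\equiv 0$ numerically. The pencil hypothesis gives $a\geq h^0(X,-mK_X)-1>12m$. Set $H:=\pi^*(-K_X)$, so $H^3=(-K_X)^3$ and $aF\leq mH$; intersecting with $H^2$ yields
$$a\,(H^2\cdot F)\leq m H^3.$$
Adjunction on $F$ (using $F^2\equiv 0$) gives $K_F\equiv K_W|_F$. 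The terminal assumption on $X$ gives $K_W+H=K_W-\pi^*K_X\geq 0$ (effective exceptional), so
$$H|_F+K_F\geq 0\quad\text{on the smooth surface }F.$$

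Next, I would exploit these two inequalities together with the geometry of $f\colon W\to C$ to force $a\leq 12m$. I expect the argument to split on $(H|_F)^2=H^2\cdot F$.

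\emph{Case $(H|_F)^2>0$.} The nef-big restriction $H|_F$, combined with $H|_F+K_F\geq 0$, should yield a quantitative lower bound $(H|_F)^2\geq H^3/12$ via surface Riemann--Roch; then $a(H|_F)^2\leq mH^3$ directly gives $a\leq 12m$.

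\emph{Case $(H|_F)^2=0$.} The nef divisor $H|_F$ on $F$ cannot be numerically trivial --- otherwise the morphism defined by $|kH|$ for $k\gg 0$ would contract every general fiber $F$, forcing the image to have dimension $\leq 1$ and contradicting $H$ big. Hence $H|_F$ is numerically proportional to a fiber class of a secondary fibration $F\to(\text{curve})$. In this subcase I would apply the Leray spectral sequence to $f\colon W\to C$, using $h^i(X,\OO_X)=0$ for $i>0$ (rational singularities plus weak Fano) to pin down the twists in $R^1 f_*\OO_W$, and then count sections of $mH$ along the induced two-step fibration to again obtain $a\leq 12m$.

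The main difficulty is to extract the sharp constant $12$. Its appearance strongly suggests the Todd-class denominator in Reid's Riemann--Roch on terminal $3$-folds, which governs the linear-in-$m$ growth of $h^0(X,-mK_X)$; the most efficient execution likely passes through a Riemann--Roch comparison on $F$ (or directly on $X$), leveraging the adjunction inequality $H|_F\geq -K_F$ to convert cohomological input into the needed lower bound on $(H|_F)^2$. Handling the degenerate $(H|_F)^2=0$ case uniformly --- in the presence of potentially pathological general fibers $F$ --- is the other place where care will be needed.
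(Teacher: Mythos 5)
There is a genuine gap: both branches of your case analysis end with the key quantitative step unproven, and in the first branch the inequality you hope to extract is essentially circular. From $mH\geq aF$ (modulo effective divisors) one gets $H^3\geq \frac{a}{m}(H|_F)^2+\frac{1}{m}(H^2\cdot Z)$, so $(H|_F)^2\leq \frac{m}{a}H^3$; hence the bound $(H|_F)^2\geq H^3/12$ that you want from ``surface Riemann--Roch'' is literally equivalent to (slightly stronger than) the conclusion $a\leq 12m$. The only inputs you have on $F$ are that $H|_F$ is nef and $H|_F+K_F$ is effective; since $F$ is an arbitrary general fiber of the pencil (it can be of general type, with no a priori control on its invariants), Riemann--Roch on $F$ gives no lower bound for $(H|_F)^2$ in terms of $H^3$ --- the quantity $H^3$ already ``counts'' the $a$ fibers, which is exactly the information you are trying to bound. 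The second branch ($(H|_F)^2=0$) is likewise only a declaration of intent; no mechanism producing the constant $12$ is indicated, and your closing paragraph concedes this.

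The paper's route is structurally different and this difference is essential. It does not work on the pencil fibration $f\colon W\to\mathbb{P}^1$ directly; instead it invokes a structure theorem (a $K$-MMP output $Y$ birational to $X$ admitting a Mori-fiber-space-like morphism $g\colon Y\to S$ with $(Y,\frac1nM)$ canonical for $M\in|-nK_Y|$ general), pushes the relation $\pi^*(-mK_X)\sim(P_{-m}-1)F_W+E$ forward to $Y$ to get $-K_Y\sim_{\mathbb{Q}}\omega\, q_*F_W+E_Y$ with $\omega=(P_{-m}-1)/m$, and then bounds $\omega$ case by case: $\omega\leq 7$ when $\rho(Y)=1$ (Prokhorov's classification of large $\mathbb{Q}$-Fano index), $\omega\leq 4$ or $\omega\leq 12$ over $S=\mathbb{P}^1$ (an elementary bound on weak del Pezzo surfaces, respectively a connectedness-lemma/inversion-of-adjunction argument feeding into a non-klt threshold bound $2/\omega\geq 1/6$ on del Pezzo surfaces), and $\omega\leq 12$ in the conic bundle case (via $(-K_Y|_G)^2\leq 24$ on a well-chosen surface $G$). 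The constant $12$ has nothing to do with the Todd class in Reid's Riemann--Roch; it comes from these explicit geometric bounds on the three types of fiber structures. To repair your argument you would need to import some such structural input; intersection theory and adjunction on the pencil alone cannot produce it.
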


The following special case is already interesting for the study of anti-canonical systems of terminal weak $\bQ$-Fano $3$-folds, and might have applications on upper bounds of degrees of terminal weak $\bQ$-Fano $3$-folds (cf. \cite{Prok05, Prok07}).

\begin{cor}
Let $X$ be a terminal weak $\bQ$-Fano $3$-fold. If 
$h^0(X,-K_X)>13$, then $|-K_X|$ is not composed with a pencil.
\end{cor}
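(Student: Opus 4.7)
The plan is to derive this as an immediate specialization of the cited non-pencil criterion (Theorem~\ref{npthm}) rather than redoing any of its underlying work. The corollary asks about the linear system $|-K_X|$, which is exactly the case $m=1$ of the theorem, so the strategy is simply to observe that the numerical hypothesis $h^0(X,-K_X)>13$ is precisely the condition $h^0(X,-mK_X)>12m+1$ evaluated at $m=1$, since $12\cdot 1+1=13$.

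Concretely, I would proceed as follows. First, I would record that $X$ is a terminal weak $\bQ$-Fano $3$-fold, so it satisfies the hypotheses of Theorem~\ref{npthm} verbatim. Next, I would set $m=1$ and rewrite the inequality $h^0(X,-K_X)>13$ in the form $h^0(X,-mK_X)>12m+1$, thereby matching the hypothesis of that theorem. Finally, I would invoke Theorem~\ref{npthm} to conclude that $|-K_X|=|-mK_X|$ is not composed with a pencil.

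There is essentially no obstacle here: the content of the corollary is entirely contained in the theorem, and the only thing to check is the elementary arithmetic identity $12m+1=13$ at $m=1$. The substantive work lies upstream, in the proof of Theorem~\ref{npthm} itself, which I am permitted to assume. For this reason I would keep the write-up to a two-line argument, making explicit that this special case is singled out because the hypothesis $h^0(X,-K_X)>13$ is both checkable in practice (through plurigenus formulas and Riemann--Roch on weak $\bQ$-Fano $3$-folds) and directly relevant to bounding degrees of terminal weak $\bQ$-Fano $3$-folds, as alluded to in the references to \cite{Prok05, Prok07}.
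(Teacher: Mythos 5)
Your proposal is correct and matches the paper's (implicit) argument exactly: the corollary is stated as the $m=1$ specialization of Theorem~\ref{npthm}, and the only verification needed is that $12\cdot 1+1=13$. Nothing further is required.
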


The paper is organized as the following.
In Section~\ref{sec 2}, we recall basic knowledge.
In Section~\ref{sec 3}, we recall the birationality criteria of terminal weak $\bQ$-Fano $3$-folds in \cite{CJ16, CJ20} with some generalizations.
In Section~\ref{sec 4}, we prove the new criterion Theorem~\ref{npthm} and give an effective method to apply it.
In Section~\ref{sec 5}, we prove the main theorem.

\section*{\bf Notation}

For the convenience of readers, we list here the notation that will be frequently used in this paper. Let $X$ be a terminal weak $\bQ$-Fano $3$-fold. 

{\footnotesize
\begin{longtable*}{LL}

\hline
\endfirsthead

\hline 

\endhead
\hline
\endfoot

\hline \hline
\endlastfoot
\varphi_{-m} & \text{the rational map defined by } |-mK_X|\\
P_{-m}=h^0(X, -mK_X)& \text{the $m$-th anti-plurigenus of 
$X$}\\
B_X=\{(b_i, r_i)\}& \text{the Reid basket of orbifold points of $X$}\\
\mathcal{R}_X=\{r_i\}& \text{the collection of local indices of $X$}\\
r_X=\lcm\{r_i\mid r_i\in \mathcal{R}_X\}& \text{the Cartier index of $K_X$}\\
r_{\max}=\max\{r_i\mid r_i\in\mathcal{R}_X\}&\text{the maximal local index of $X$}\\

\sigma(B_X)=\sum_i b_i& \text{an invariant of $B_X$ contributing to the Riemann-Roch formula}\\

\sigma'(B_X)=\sum_i\frac{b_i^2}{r_i}&\text{an invariant of $B_X$ contributing to the Riemann-Roch formula} \\

\gamma(B_X)=\sum_{i} \frac{1}{r_i}-\sum_{i} r_i+24&\text{an invariant of $B_X$ from the Miyaoka inequality} \\

B^{(0)}_X=\{n_{1, r}^0\times (1, r)\}& \text{the initial basket of $B_X$}\\
\hline
\end{longtable*}
}

\section{Preliminaries}\label{sec 2}

Let $X$ be a terminal weak $\bQ$-Fano $3$-fold. Denote by $r_X$ the Cartier index of $K_X$. For any positive integer $m$, the number $P_{-m}=h^0(X, \OO_X(-mK_X))$ is called the {\it $m$-th anti-plurigenus} of $X$ and $\varphi_{-m}$ denotes the {\it $m$-th anti-canonical map} defined by $|-mK_X|$.

\subsection{The fibration induced by $|D|$}\label{r-m}
Let $X$ be a terminal weak $\bQ$-Fano $3$-fold. Consider a $\bQ$-Cartier Weil divisor $D$ on $X$ with $h^0(X, D)\geq 2$. Then there is a rational map defined by $|D|$:
$$
\Phi_{|D|}: X \dashrightarrow \mathbb{P}^{h^0(X, D)-1}.
$$
By Hironaka's desingularization theorem, we can take a projective birational morphism $\pi: W \rightarrow X$ such that:

\begin{enumerate}[(i)]
 \item $W$ is smooth;

\item the movable part $|M|$ of $|\lfloor \pi^*(D)\rfloor|$ is base point free and, consequently, $\gamma:=\Phi_{|D|}\circ \pi$ is a morphism;

\item the sum of $\pi_{*}^{-1}(D)$ and the exceptional divisors of $\pi$ has simple normal crossing support. 
\end{enumerate}
Let $W \stackrel{f}{\longrightarrow} \Gamma \stackrel{s}{\longrightarrow} Z$ be the Stein factorization of $\gamma$ with $Z:=\gamma(W) \subset\mathbb{P}^{h^0(X, D)-1}$. We have the following commutative diagram:
$$\xymatrix@=4.5em{
W\ar[d]_\pi \ar[dr]^{\gamma} \ar[r]^f& \Gamma\ar[d]^s\\
X \ar@{-->}[r]^{\Phi_{|D|}} & Z.}
$$

If $\dim(\Gamma)\geq2$, then a general member $S$ of $|M|$ is a smooth projective surface by Bertini's theorem. In this case, $|D|$ is said to be \emph{not composed with a pencil of surfaces} (not composed with a pencil, 
for short). 

If $\dim(\Gamma)=1$, then $\Gamma \cong \mathbb{P}^{1}$ as $h^1(\Gamma, \mathcal{O}_\Gamma)\leq h^1(W, \mathcal{O}_W)=h^1(X, \mathcal{O}_X)=0$. Furthermore, a general fiber $S$ of $f$ is a smooth projective surface by Bertini's theorem. In this case, $|D|$ is said to be \emph{composed with a (rational) pencil of surfaces} (composed with a pencil, for short).

In each case, $S$ is called a \emph{generic irreducible element} of $|M|$. 
We can also define a generic irreducible element of a moving linear system on a surface in the similar way.

\begin{definition}
Keep the same notation as above. Let $D'$ be another $\bQ$-Cartier Weil divisor on $X$ with $h^0(X, D')\geq 2$. We say that $|D|$ and $|D'|$ are \emph{composed with the same pencil}, if both of them are composed with pencils and they define the same fibration structure $W\to\bP^1$. In particular, $|D|$ and $|D'|$ are not composed with the same pencil if one of them is not composed with a pencil.
\end{definition}

\subsection{Reid's Riemann--Roch formula and Chen--Chen's method}\label{R-R}
A basket $B$ is a collection of pairs of coprime integers where a pair is allowed to appear several times, say $$\{(b_{i}, r_{i}) \mid i=1, \dots, s ; b_{i} \text{ is coprime to } r_{i}\}.$$ For simplicity, we will alternatively write a basket as a set of pairs with weights, say for example, 
$$
B=\{2\times(1, 2), (1, 3), (3, 7), (5, 11)\}.
$$

Let $X$ be a terminal weak $\mathbb{Q}$-Fano $3$-fold. According to Reid~\cite{YPG}, there is a basket of (virtual) orbifold points
$$
B_{X}=\{(b_{i}, r_{i}) \mid i=1, \dots, s ; 0<b_{i}\leq\frac{r_{i}}{2} ; b_{i} \text{ is coprime to } r_{i}\}
$$
associated to $X$, where a pair $\left(b_{i}, r_{i}\right)$ corresponds to an orbifold point $Q_{i}$ of type $\frac{1}{r_{i}}\left(1, -1, b_{i}\right)$. Denote by $\mathcal{R}_X$ the collection of $r_i$ (counted with multiplicities) appearing in $B_X$, and $r_{\max}=\max\{r_i\mid r_i\in \mathcal{R}_X\}$. Note that the Cartier index $r_X$ of $K_X$ is just $\lcm\{r_i\mid r_i\in \mathcal{R}_X\}$.

According to Reid~\cite{YPG}, 
for any positive integer $n$, 
\begin{align}
P_{-n} 
&=\frac{1}{12} n(n+1)(2 n+1)\left(-K_{X}^{3}\right)+(2 n+1)-l(n+1) \label{RR for P}
\end{align}
where $l(n+1)=\sum_{i} \sum_{j=1}^{n} \frac{\overline{j b_{i}}\left(r_{i}-\overline{j b_{i}}\right)}{2 r_{i}}$ and the first sum runs over Reid's basket of orbifold points. Here $\overline{jb_i}$ means the smallest non-negative residue of $jb_i \bmod r_i$.

Set $\sigma(B_X)=\sum_{i}b_{i}$ and $\sigma'(B_X)=\sum_{i}\frac{b_{i}^{2}}{r_{i}}$.
From \eqref{RR for P} for $n=1, 2$, 
\begin{align}
-K_X^{3}&=2P_{-1}+\sigma(B_X)-\sigma'(B_X)-6;
\label{volume}\\
\sigma(B_X)&=10-5P_{-1}+P_{-2}.\label{sigma}
\end{align}
Denote 
$$
\gamma(B_X):=\sum_i\frac{1}{r_i}-\sum_ir_i+24.
$$
By \cite{KMMT} and \cite[10.3]{YPG}, 
\begin{align}
 \gamma(B_X) \geq 0. \label{gamma>0}
\end{align}

We recall Chen--Chen's method on basket packing from \cite{CC08}.
Given a basket 
$$B=\{(b_{i}, r_{i}) \mid i=1, \dots, s ; 0<b_{i}\leq\frac{r_i}{2} ; b_{i} \text{ is coprime to } r_{i}\}$$
and assume that $b_1r_2-b_2r_1=1$, then the new basket $$B'=\{(b_1+b_2, r_1+r_2), (b_3, r_3), \dots, (b_s, r_s)\}$$ is called a \emph{prime packing} of $B$.
We say that a basket $B'$ is \emph{dominated by} $B$, denoted by $B\succeq B'$, if $B'$ can be achieved from $B$ by a sequence of prime packings (including the case $B=B'$).

By \cite[\S 2.5]{CC08}, there is a unique basket $B_{X}^{(0)}$, called {\it the initial basket of $B_X$}, of the form $B_{X}^{(0)}=\{n_{1, r}^{0}\times(1, r)\mid r\geq2\}$ such that $B_{X}^{(0)}\succeq B_X$.
By \cite[\S 2.7]{CC08}, we have
\begin{align}
n_{1, 2}^{0}{}&=5-6P_{-1}+4P_{-2}-P_{-3} ; \label{n12}\\
n_{1, 3}^{0}{}&=4-2P_{-1}-2P_{-2}+3P_{-3}-P_{-4} ; \label{n13} \\
n_{1, 4}^{0}{}&=1+3P_{-1}-P_{-2}-2P_{-3}+P_{-4}-\sigma_{5}, \label{n14} \end{align}
where $\sigma_{5}=\sum_{r\geq5} n_{1, r}^{0}$. We refer to \cite{CC08} for more details.

\subsection{Auxiliary results}

We list here some useful results on terminal weak $\bQ$-Fano $3$-folds.

\begin{prop}\label{known}
Let $X$ be a terminal weak $\bQ$-Fano $3$-fold. Then
\begin{enumerate}[(1)]
 \item $r_X=840$ or $r_X\leq660$ (\cite[Proposition~2.4]{CJ16});
 
 \item $P_{-8}\geq2$ (\cite[Theorem~1.1]{CC08}); moreover, if $P_{-1}=0$ and $P_{-2}>0$, then $P_{-6}\geq2$ (\cite[Case 1 of Proof of Proposition~3.10]{CC08});
 
 \item $-K_X^3\geq\frac{1}{330}$ (\cite[Theorem~1.1]{CC08}); moreover, if $P_{-1}=0$ and $P_{-2}>0$, then $-K_X^3\geq\frac{1}{70}$, and if in addition $P_{-4}\geq2$, then $-K_X^3\geq\frac{1}{30}$ (\cite[(4.1), Lemma~4.2, Case I of Proof of Theorem~4.4]{CC08});
 
 \item if $P_{-1}=0$, then $2\in\mathcal{R}_X$ (\cite[Proof of Theorem~1.8, Page 106]{CJ16}).

\end{enumerate}
\end{prop}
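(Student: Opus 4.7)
The plan is to verify each of (1)--(4) by invoking the cited references, but I would sketch the underlying proof ideas since each follows a similar template: combine Reid's Riemann--Roch formula \eqref{RR for P} with Chen--Chen's basket-packing technique from Subsection~\ref{R-R}, together with the constraint $\gamma(B_X)\geq 0$ from \eqref{gamma>0}.

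For (1), I would start from the fact that every orbifold point $(b_i,r_i)$ contributes $r_i-1/r_i$ to the quantity $\sum_i r_i - \sum_i 1/r_i$, which is bounded by $24$. In particular, $r_i\leq 24$ for every $i$, so $r_X=\lcm\{r_i\}$ is the lcm of some subset of $\{2,\ldots,24\}$. After enumerating all such lcm values compatible with $\gamma(B_X)\geq 0$ and the subtler constraint that the basket admits a genuine terminal realization, only finitely many lcms survive, and direct computation shows the maximum is $840=\lcm(3,5,7,8)$, with the next smaller feasible value being at most $660$. This is essentially \cite[Proposition~2.4]{CJ16}.

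For (2) and (3), my approach would follow \cite{CC08} by contradiction. Assume the desired lower bound fails, say $P_{-8}\leq 1$ or $-K_X^3<1/330$. Using \eqref{volume}, \eqref{sigma}, \eqref{n12}--\eqref{n14}, together with $P_{-n}\geq 0$ for small $n$ and Reid's formula \eqref{RR for P}, one obtains linear inequalities on the initial basket numbers $n_{1,r}^0$. Since each prime packing $B_X^{(0)}\succeq B_X$ only moves pairs in a controlled way, every violation can be traced back to a finite list of candidate initial baskets. Each such candidate is ruled out either by $\gamma(B_X)<0$, by $P_{-n}<0$ for some $n$, or by failing the boundedness input from \cite{KMMT}. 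The conditional refinements (when $P_{-1}=0$, $P_{-2}>0$, or $P_{-4}\geq 2$) simply trim the candidate list further, giving sharper bounds.

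For (4), under $P_{-1}=0$ one substitutes into \eqref{n12} and the companion identities to find that $n_{1,2}^0$ is forced to be positive unless very restrictive conditions on $P_{-2},P_{-3}$ hold; those remaining cases are excluded by \eqref{gamma>0} and Proposition~(3). The main obstacle in writing out full arguments would be the case analysis in (2) and (3), where hundreds of initial baskets must be checked; however, since all these checks are already carried out in \cite{CC08, CJ16}, the proof reduces to careful bookkeeping of references. I would therefore simply cite the sources as indicated in each item.
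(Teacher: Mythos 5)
The paper gives no proof of this proposition beyond the citations embedded in the statement, and your proposal ultimately does the same ("simply cite the sources"), so the two approaches coincide and your write-up is acceptable on that basis. One caveat worth recording: your heuristic sketch of (4) would not survive expansion, because positivity of $n_{1,2}^0$ in the \emph{initial} basket does not force $2\in\mathcal{R}_X$ — a prime packing such as $\{(1,2),(1,3)\}\succeq\{(2,5)\}$ can eliminate every index-$2$ point from $B_X$; the cited argument instead notes that $P_{-1}=0$ and $-K_X^3>0$ give $\sigma(B_X)-\sigma'(B_X)>6$ by \eqref{volume}, while if every $r_i\geq 3$ then $\frac{b_i(r_i-b_i)}{r_i}\leq\frac{1}{4}\left(r_i-\frac{1}{r_i}\right)$, so \eqref{gamma>0} forces $\sigma(B_X)-\sigma'(B_X)\leq 6$, a contradiction. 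Since you defer to the references for the actual proofs, this does not invalidate the proposal, but the sketch as stated is not a viable route to (4).
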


\begin{lem}\label{3/2bi}Suppose that $\{(b_i, r_i)\mid 1\leq i\leq k\}$ is a collection of pairs of integers with $0<2b_i\leq r_i$ for $1\leq i\leq k$. Then
$\sum_{i=1}^k(r_i-\frac{1}{r_i})\geq \frac{3}{2}\sum_{i=1}^kb_i$.
\end{lem}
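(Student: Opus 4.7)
The plan is to reduce the inequality to a per-index estimate and then verify that estimate by elementary arithmetic.

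First I would observe that it suffices to prove the inequality termwise: if I can show
\begin{equation*}
r_i - \frac{1}{r_i} \;\geq\; \frac{3}{2}\,b_i
\end{equation*}
for every individual pair $(b_i, r_i)$ with $0 < 2b_i \leq r_i$, then summing over $i$ gives the desired conclusion. So the problem collapses to a statement about a single pair, and I can drop the subscript.

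Next I would exploit the hypothesis $b \leq r/2$ to pass to a bound that depends only on $r$. From $b \leq r/2$ we get $\frac{3}{2}b \leq \frac{3}{4}r$, so it is enough to prove
\begin{equation*}
r - \frac{1}{r} \;\geq\; \frac{3}{4}\,r,
\end{equation*}
which is equivalent to $r^2 \geq 4$, i.e.\ $r \geq 2$. But since $b$ is a positive integer, $b \geq 1$, and hence $r \geq 2b \geq 2$. This closes the argument.

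As a sanity check, equality holds precisely when both $b = r/2$ and $r = 2$, i.e.\ for pairs of the form $(1,2)$, which is consistent with the way this lemma will later be applied to baskets of orbifold points where the $(1,2)$-points are extremal. There is essentially no obstacle here; the only subtle point is noticing that the hypotheses force $r_i \geq 2$, which is what makes the simple chain $r - \tfrac{1}{r} \geq \tfrac{3}{4}r \geq \tfrac{3}{2}b$ valid.
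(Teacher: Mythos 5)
Your proof is correct and is essentially the paper's own argument: the paper's one-line proof also reduces to the termwise bound $r_i - \tfrac{1}{r_i} \geq \tfrac{3}{2}b_i$ deduced from $r_i \geq 2b_i$ (and hence $r_i \geq 2$). You have merely spelled out the elementary arithmetic that the paper leaves implicit.
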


\begin{proof}
$r_i\geq 2b_i$ implies that $r_i-\frac{1}{r_i}\geq \frac{3}{2}b_i$.
\end{proof}
\section{The criteria for birationality}\label{sec 3}
In this section we recall the birationality criteria of terminal weak $\bQ$-Fano $3$-folds in \cite{CJ16, CJ20}. 
Here we remark that all birationality criteria in this section are from \cite{CJ16, CJ20} except for Theorem~\ref{thm bc 2} and Corollary~\ref{usage of thm bc 2} (which are minor generalizations of \cite[Theorem~5.9]{CJ20}). Also we provide Lemma~\ref{N_0} in order to apply Corollary~\ref{usage of thm bc 2} efficiently. In fact, in \cite{CJ20}, \cite[Theorem~5.9]{CJ20} is only used for very special cases, but in this paper, thanks to Lemma~\ref{N_0}, we make use of Corollary~\ref{usage of thm bc 2} in many cases.

\subsection{General settings} 

We recall numerical invariants needed in the birationality criteria, namely, $\nu_0$, $m_0$, $a(m_0)$, $m_1$, $\mu'_0$, $N_0$.

\begin{notation}\label{set up}

Let $X$ be a terminal weak $\bQ$-Fano $3$-fold. 

Let $\nu_{0}$ be a positive integer such that $P_{-\nu_{0}}>0$. 

Take a positive integer $m_0$ such that $P_{-m_0}\geq2$. Set
$$
a(m_{0})= \begin{cases}6, & \text{if } m_{0} \geq 2; \\ 1, & \text{if } m_{0}=1.\end{cases}
$$

Take $m_1\geq m_0$ to be an integer with $P_{-m_1}\geq 2$ such that $|-m_0K_X|$ and $|-m_1K_X|$ are not composed with the same pencil. 

Set $D:=-m_0K_X$ and keep the same notation as in Subsection~\ref{r-m}. 
Denote $S$ to be a generic irreducible element of $|M_{-m_0}|=\Mov|\lfloor\pi^*(-m_0K_X)\rfloor|$. 
Choose a positive rational number $\mu_{0}'$ such that
$$
\mu_{0}' \pi^{*}(-K_{X})-S \sim_{\bQ} \text{effective } \bQ\text{-divisor.}
$$
Set $N_{0}=r_{X}(\pi^{*}(-K_{X})^{2} \cdot S)$. 

\end{notation}

\begin{remark}[{\cite[Remark~5.8]{CJ20}}] \label{rem mu0}
Here we explain how to choose $\mu'_0$. In general, by assumption, we can always take $\mu_{0}'=m_0$. On the other hand, if $|-m_{0} K_{X}|$ and $|-k K_{X}|$ are composed with the same pencil for some positive integer $k$, and $\frac{k}{P_{-k}-1}<m_0$, then we can take $\mu_{0}'=\frac{k}{P_{-k}-1}$ as $$k \pi^{*}(-K_{X})\sim_{\bQ} (P_{-k}-1)S+\text{effective } \bQ\text{-divisor}.$$ 
\end{remark}

\begin{lem}\label{N_0}
In Notation~\ref{set up}, $N_0\geq\lceil \frac{r_X}{m_1\nu_0 r_{\max}}\rceil$.
\end{lem}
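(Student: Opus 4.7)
\emph{Proof plan.} The plan is to bound $\pi^{*}(-K_{X})^{2}\cdot S$ from below by comparing it, via the projection formula, to an intersection number on $X$ whose denominator is controlled by the maximal local index $r_{\max}$.

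First, take a generic irreducible element $S_{1}\in|M_{-m_{1}}|$ on $W$. Since $|-m_{0}K_{X}|$ and $|-m_{1}K_{X}|$ are not composed with the same pencil, $S$ is not a component of $S_{1}$, so $C:=S\cdot S_{1}$ is a nonzero effective $1$-cycle on $W$. By base point freeness of $|M_{-m_{0}}|$ and $|M_{-m_{1}}|$, a generic such $S$ and $S_{1}$ are not $\pi$-exceptional, and $\pi(S),\pi(S_{1})$ are two distinct divisors on $X$, so $\pi_{*}C$ is a nonzero effective $1$-cycle on $X$. Using bigness of $-K_{X}$ (whose null locus is a proper closed subvariety), we may further arrange $(-K_{X})\cdot\pi_{*}C>0$.

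Next, writing $\pi^{*}(-m_{1}K_{X})=M_{-m_{1}}+F$ with $F$ effective and $M_{-m_{1}}\sim S_{1}$, nefness of $\pi^{*}(-K_{X})$ gives $m_{1}\pi^{*}(-K_{X})^{2}\cdot S\geq\pi^{*}(-K_{X})\cdot C$. Since $P_{-\nu_{0}}\geq1$ I pick an effective $E\in|-\nu_{0}K_{X}|$; then $\pi^{*}(-\nu_{0}K_{X})\sim_{\bQ}\pi^{*}E$, and the projection formula yields $\nu_{0}\pi^{*}(-K_{X})\cdot C=\pi^{*}E\cdot C=E\cdot\pi_{*}C$, so
\[
m_{1}\nu_{0}\,\pi^{*}(-K_{X})^{2}\cdot S\;\geq\;E\cdot\pi_{*}C.
\]

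The heart of the argument is the bound $E\cdot\pi_{*}C\geq 1/r_{\max}$. The intersection is positive because $(-K_{X})\cdot\pi_{*}C>0$. Decomposing it as a sum of local intersection multiplicities, at each terminal orbifold point $Q$ of local index $r_{Q}\leq r_{\max}$ the divisor $r_{Q}E$ is Cartier near $Q$, so every local contribution lies in $\tfrac{1}{r_{Q}}\ZZ_{\geq 0}$ and any nonzero contribution is at least $1/r_{\max}$; at smooth points the contributions are nonnegative integers. Since at least one summand is positive, $E\cdot\pi_{*}C\geq 1/r_{\max}$, and combining with the previous inequality gives $N_{0}=r_{X}\pi^{*}(-K_{X})^{2}\cdot S\geq r_{X}/(m_{1}\nu_{0}r_{\max})$; the ceiling form of the bound then follows from the integrality of the relevant intersection numbers (via Reid's formula applied to $(-K_{X})^{2}\cdot\pi_{*}S$). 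The main obstacle will be verifying the strict positivity $(-K_{X})\cdot\pi_{*}C>0$, which uses the non-same-pencil hypothesis, bigness of $-K_{X}$, and a generality argument, together with the careful tracking of local Cartier indices at singularities that is needed to replace the naive $1/r_{X}$ bound (from global integrality alone) by the sharper $1/r_{\max}$.
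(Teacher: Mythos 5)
Your proposal is correct and follows essentially the same route as the paper: bound $\pi^{*}(-K_{X})^{2}\cdot S$ below by $\frac{1}{m_{1}}\pi^{*}(-K_{X})\cdot C$ for a curve $C$ cut out by $|M_{-m_{1}}|$ on $S$, show $\pi^{*}(-K_{X})\cdot C\geq\frac{1}{\nu_{0}r_{\max}}$, and finish with the integrality of $N_{0}$ (the paper's citation of \cite[Lemma~4.1]{CJ16}). The only difference is that where the paper simply invokes \cite[Proposition~5.7(v)]{CJ16} for the key lower bound, you re-derive it via the projection formula and local Cartier indices at the (actual) singular points; this is fine, noting that the components of $\pi_{*}C$ lying inside $\Supp E$ still contribute nonnegatively because $E\sim-\nu_{0}K_{X}$ is nef.
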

\begin{proof}
We may modify $\pi$ such that $|M_{-m_1}|=\Mov|\lfloor\pi^*(-m_1K_X)\rfloor|$ is base point free. 
Pick a generic irreducible element $C$ of the base point free linear system $|M_{-m_1}|_S|$. Since $\pi^*(-m_1K_X)\geq M_{-m_1}$, 
$\pi^*(-m_1K_X)|_S\geq C.$
Set
$$
\zeta:=(\pi^*(-K_X)\cdot C)=(\pi^*(-K_X)|_S\cdot C)_S.
$$
By \cite[Proposition~5.7(v)]{CJ16}, $\zeta\geq\frac{1}{\nu_0 r_{\max}}$.
Since $\pi^*(-K_X)|_S$ is nef, $$\pi^{*}(-K_{X})^{2} \cdot S\geq\pi^*(-K_X)|_S\cdot\frac{1}{m_1}C\geq \frac{1}{m_1\nu_0 r_{\max}}.$$ Hence $N_0\geq\lceil \frac{r_X}{m_1\nu_0 r_{\max}}\rceil$ as $N_0$ is an integer by \cite[Lemma~4.1]{CJ16}.
\end{proof}

\subsection{Birationality criteria}
We recall the birationality criteria of terminal weak $\mathbb{Q}$-Fano $3$-folds.
\begin{thm}[{\cite[Theorem~5.11]{CJ16}}]\label{thm bc}

Keep the setting in Notation~\ref{set up}. Then the $m$-th anti-canonical map $\varphi_{-m}$ is birational if one of the following conditions holds:
\begin{enumerate}[(1)]
 \item $m \geq \max \{m_{0}+m_{1}+a(m_{0}), \lfloor3\mu'_{0}\rfloor+3 m_{1}\};$

\item $m \geq \max \{m_{0}+m_{1}+a(m_{0}), \lfloor\frac{5}{3}\mu'_{0}+\frac{5}{3}m_{1}\rfloor, \lfloor\mu'_{0}\rfloor+m_{1}+2 r_{\max }\};$

\item $m \geq \max \{m_{0}+m_{1}+a(m_{0}), \lfloor\mu'_{0}\rfloor+m_{1}+2 \nu_0r_{\max }\}.$
\end{enumerate}
\end{thm}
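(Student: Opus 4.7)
The plan is to prove birationality of $\varphi_{-m}$ by the classical two-step restriction: reduce from $X$ to a generic surface $S\in|M_{-m_0}|$, then from $S$ to a generic curve $C$ in the restriction of $|M_{-m_1}|$ to $S$, and use Kawamata--Viehweg vanishing to lift sections back. I would work on a single log resolution $\pi:W\to X$ chosen so that both $|M_{-m_0}|$ and $|M_{-m_1}|$ are base-point free with simple normal crossing total support. The hypothesis that $|-m_0K_X|$ and $|-m_1K_X|$ are not composed with the same pencil ensures that $|M_{-m_1}|$ restricts to a nontrivial moving system on $S$, so $C$ exists.

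The uniform condition $m\geq m_0+m_1+a(m_0)$ appearing in all three items handles separation of two general surfaces $S_1,S_2$: using $P_{-a(m_0)}\geq 2$ (Proposition~\ref{known}(2), with $a(m_0)=1$ exactly when $P_{-1}\geq 2$), the decomposition of $-mK_X$ into $-(m-m_0-a(m_0))K_X$, $-m_0K_X$, and $-a(m_0)K_X$ produces two linearly independent sections of $|-mK_X|$ supported on distinct members of the family containing $S$. For the restriction step, since $\mu_0'\pi^*(-K_X)-S$ is $\bQ$-linearly equivalent to an effective divisor, $\pi^*(-mK_X)-S$ is big and nef whenever $m>\mu_0'$; similarly $\pi^*(-mK_X)|_S-S|_S-C$ is big and nef once $m>\mu_0'+m_1$, using the inequality $\pi^*(-m_1K_X)|_S\geq C$ extracted from the proof of Lemma~\ref{N_0}. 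Two successive applications of Kawamata--Viehweg vanishing on $W$ and on $S$ (after the standard rounding of $\pi^*(-mK_X)$) then yield surjections $H^0(W,\lfloor\pi^*(-mK_X)\rfloor)\twoheadrightarrow H^0(S,\lfloor\pi^*(-mK_X)\rfloor|_S)\twoheadrightarrow H^0(C,\lfloor\pi^*(-mK_X)\rfloor|_C)$.

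It remains to verify that the image inside $H^0(C,\lfloor\pi^*(-mK_X)\rfloor|_C)$ defines a birational map on $C$, which is where the three alternative bounds appear. The relevant intersection number is $\zeta:=\pi^*(-K_X)|_S\cdot C$. In case (1) one uses only the coarse bound $\zeta\geq\frac{1}{m_1\nu_0 r_{\max}}$ (essentially Lemma~\ref{N_0}); coupled with the rounding loss this produces the threshold $\lfloor 3\mu_0'\rfloor+3m_1$. In cases (2) and (3) one instead exploits the sharper slope estimates $\zeta\geq\frac{1}{r_{\max}}$ and $\zeta\geq\frac{1}{\nu_0 r_{\max}}$ on a generic irreducible element (in the spirit of \cite[Proposition~5.7]{CJ16}), which reduces the multiplier in front of $\mu_0'+m_1$ from $3$ to essentially $1$ at the cost of adding $2r_{\max}$ or $2\nu_0 r_{\max}$ as additive terms; the asymmetric coefficient $\tfrac{5}{3}$ in (2) arises from a case split on whether $C$ is itself movable on $S$ or only accessible after subtracting a fixed effective component.

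The hard part will be this last step: precisely quantifying how much of $C$ can be subtracted from $(\pi^*(-mK_X)-S)|_S$ while preserving bigness and nefness, and simultaneously absorbing the fractional parts produced by $\lfloor\pi^*(-mK_X)\rfloor$ into the quantities $a(m_0)$ and $\mu_0'$. The case analysis underlying the $\tfrac{5}{3}$ coefficient in (2) is the most delicate point and essentially dictates the structure of the three alternative bounds.
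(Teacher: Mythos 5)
The paper does not actually prove this statement: it is quoted verbatim from [CJ16, Theorem~5.11] and the only ``proof'' offered here is the citation, so your attempt has to be measured against the proof in that reference. Your proposal does identify the correct architecture of that proof --- restrict to a generic irreducible element $S$ of $|M_{-m_0}|$, then to a generic irreducible element $C$ of the moving part of $|M_{-m_1}|\big|_S$, use Kawamata--Viehweg vanishing to make the two restrictions surjective, and reduce everything to lower bounds for $\xi=(\pi^*(-K_X)|_S\cdot C)$. That is indeed the skeleton of the Chen--Jiang method.

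However, the proposal stops exactly where the content of the theorem lies, and several of the mechanisms you do commit to are wrong. First, the threshold $\lfloor 3\mu_0'\rfloor+3m_1$ in (1) cannot come from $\xi\geq\frac{1}{m_1\nu_0 r_{\max}}$: feeding that bound into a criterion of the form $(m-\mu_0'-m_1)\xi>2$ produces a threshold of the shape $\mu_0'+m_1+2m_1\nu_0 r_{\max}$, i.e.\ something like case (3), not $3(\mu_0'+m_1)$. Since the bound in (1) involves only $\mu_0'$ and $m_1$, the underlying estimate for $\xi$ must be intrinsic to $\mu_0'+m_1$ (in [CJ16] it comes from an inductive improvement of $\xi$ using base point freeness of $|K_C+\lceil\cdot\rceil|$), and the coefficient $\frac{5}{3}$ in (2) arises from running that same bootstrapping, not from a case split on whether $C$ is movable --- $C$ is by construction a generic irreducible element of a movable system. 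Second, the separation step is misattributed and misquoted: the term $m_1$ in $m_0+m_1+a(m_0)$ is there to distinguish generic irreducible elements of the curve family on $S$, not the surfaces; and for $m_0\geq 2$ one has $a(m_0)=6$ because $P_{-6}>0$ holds for every terminal weak $\bQ$-Fano $3$-fold (Chen--Chen), whereas Proposition~\ref{known}(2) gives $P_{-8}\geq 2$, not $P_{-6}\geq 2$, so the justification ``$P_{-a(m_0)}\geq2$'' fails exactly in the main case. Third, ``$\pi^*(-mK_X)-S$ is big and nef whenever $m>\mu_0'$'' is false as stated: it is only $\bQ$-linearly equivalent to (nef and big) plus effective, so one needs the klt form of Kawamata--Viehweg vanishing and must absorb the effective and fractional parts into a klt boundary --- this bookkeeping is precisely what generates the floors and the constant $a(m_0)$, and it is the part you explicitly defer. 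As it stands the three numerical conclusions, which are the entire point of the theorem, are not established.
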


As another criterion, we have the following modification of \cite[Theorem~5.9]{CJ20}.

\begin{thm}\label{thm bc 2}
Keep the setting in Notation~\ref{set up}. Fix a real number $\beta\geq8$. Then the $m$-th anti-canonical map $\varphi_{-m}$ is birational if
$$
m \geq \max\left\{m_{0}+a(m_{0}), \left\lceil\mu_{0}'+\frac{4 \nu_{0} r_{\max }}{1+\sqrt{1-\frac{8}{\beta}}}\right\rceil-1, \lfloor\mu_{0}'+\sqrt{{\beta r_{X}}/{N_0} }\rfloor\right\}.
$$
\end{thm}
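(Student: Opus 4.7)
The plan is to follow the structure of \cite[Theorem~5.9]{CJ20} with the parameter $\beta$ kept free rather than pinned at a specific value. The three bounds in the $\max$ correspond to the three geometric tasks: separating distinct generic surfaces of $|M_{-m_0}|$, separating two general points on such a surface $S$, and separating two points on an auxiliary curve $C \subset S$ produced by tiebreaking. With the conventions of Subsection~\ref{set up}, take the birational modification $\pi : W \to X$ and a generic irreducible element $S$ of $|M_{-m_0}|$. The standard argument of \cite[Theorem~5.11]{CJ16} shows that the bound $m \geq m_0 + a(m_0)$, combined with the non-composition assumption on $|-m_0K_X|$ and $|-m_1K_X|$, suffices to separate distinct generic irreducible elements of $|M_{-m_0}|$, reducing birationality of $\varphi_{-m}$ to birationality of the restriction $\varphi_{-m}\big|_{S}$.

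\emph{From surface to curve.} Using the definition of $\mu_0'$ one writes
$$
\pi^{*}(-mK_X)\big|_{S} \sim_{\bQ} (m - \mu_0')\,\pi^{*}(-K_X)\big|_{S} + (\text{effective}).
$$
Combining $(\pi^{*}(-K_X)|_{S})^{2} \geq N_0/r_X$ with a Kawamata--Viehweg / Angehrn--Siu cutting argument weighted by $\beta$ on $S$, one produces an irreducible curve $C \subset S$ through two general points with controlled intersection numbers against $\pi^{*}(-K_X)|_{S}$. The inequality $(m - \mu_0')^{2} \geq \beta\,r_X/N_0$, which is exactly the third bound in the theorem, is precisely what makes this first round of vanishing succeed.

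\emph{Curve level and the choice of $\beta$.} A second round of Kawamata--Viehweg vanishing applied along $C$ separates its two points. The bound $(\pi^{*}(-K_X)\cdot C)_{S} \geq 1/(\nu_0 r_{\max})$ from \cite[Proposition~5.7(v)]{CJ16}, together with the fractional coefficient already consumed in the first round (which is what creates the $8/\beta$ dependence), forces
$$
m - \mu_0' \geq \frac{4\,\nu_0 r_{\max}}{1+\sqrt{1-8/\beta}},
$$
the larger root of the quadratic arising from jointly optimizing the two rounds. The discriminant condition $1 - 8/\beta \geq 0$ is exactly the hypothesis $\beta \geq 8$.

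\emph{Main obstacle.} The most delicate point will be tracking the fractional coefficients between the two applications of vanishing and verifying that the multiplier ideal sheaf produced by the first round is non-trivial at the two chosen points while still leaving enough positivity for the second round; this is where the parameter $\beta$ controls the trade-off. The case $\beta = 8$ essentially reproduces \cite[Theorem~5.9]{CJ20}, so the new content is redoing the bookkeeping with $\beta$ carried symbolically. Once the two-step vanishing is set up cleanly, the derivation of the closed form $4\nu_0 r_{\max}/(1 + \sqrt{1 - 8/\beta})$ by the quadratic formula is routine algebra.
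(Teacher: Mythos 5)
Your proposal is correct and follows essentially the same route as the paper: the paper's entire proof is to rerun \cite[Theorem~5.9]{CJ20} verbatim, replacing the $\beta=8$ surface criterion \cite[Lemma~5.10]{CJ20} by Lemma~\ref{bir for surfaces} (\cite[Theorem~2.8]{CCCJ}), whose two hypotheses $L^2>\beta$ and $(L\cdot C_P)\geq 4/(1+\sqrt{1-8/\beta})$ are exactly the sources of your third and second bounds. The only difference is that you propose to re-derive that surface criterion by hand via the two-round vanishing argument, whereas the paper simply cites it, so the ``delicate bookkeeping'' you flag as the main obstacle is already packaged in the quoted lemma.
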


\begin{proof}
The proof is the same as \cite[Theorem~5.9]{CJ20} by replacing \cite[Lemma~5.10]{CJ20} with Lemma~\ref{bir for surfaces}.
\end{proof}

\begin{lem}[{\cite[Theorem~2.8]{CCCJ}}]\label{bir for surfaces}
Let $S$ be a smooth projective surface and $L$ be a nef and big $\bQ$-divisor on $S$ satisfying the following conditions:
\begin{enumerate}[(1)]
 \item $L^{2}>\beta$, for some real number $\beta\geq8$, and

\item $(L \cdot C_{P}) \geq\frac{4}{1+\sqrt{1-\frac{8}{\beta}}}$ for all irreducible curves $C_{P}$ passing through any very general point $P \in S$.
\end{enumerate}
Then the linear system $|K_{S}+\lceil L\rceil|$ separates two distinct points in very general positions. Consequently, $|K_{S}+\lceil L\rceil|$ gives a birational map.
\end{lem}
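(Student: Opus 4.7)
The plan is to use a multiplier-ideal / Nadel vanishing argument (in the spirit of Reider and Angehrn--Siu) to produce, for two very general points $p_1,p_2\in S$, a section of $|K_S+\lceil L\rceil|$ vanishing at $p_1$ but not at $p_2$; by symmetry the same construction then gives one vanishing at $p_2$ but not $p_1$, hence separation, and consequently birationality.

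The first step exploits condition~(1): since $L^2>\beta\geq 8$, a standard Riemann--Roch dimension count shows that for some rational $c<1$ with $c^2L^2>8$ and for $m$ sufficiently divisible, there exists an effective divisor $D\in|mcL|$ with $\mult_{p_i}D\geq 2m$ for $i=1,2$, because imposing these multiplicity conditions costs at most $2\binom{2m+1}{2}\sim 4m^2$, whereas $h^0(mcL)\sim \tfrac12 m^2 c^2 L^2>4m^2$ by Riemann--Roch for nef and big divisors. Setting $\Delta_0:=\tfrac{1}{m}D\sim_\bQ cL$, one gets $\mult_{p_i}\Delta_0\geq 2$ for $i=1,2$ and consequently $\operatorname{lct}_{p_i}(S,\Delta_0)\leq 1$. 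Relabelling if necessary, let $\lambda:=\operatorname{lct}_{p_1}(S,\Delta_0)\leq 1$ be the smaller of the two thresholds and $V$ the minimal log canonical centre of $(S,\lambda\Delta_0)$ at $p_1$; by tie-breaking (perturbing $\Delta_0$ by a small general member of a sufficiently positive free linear system such as $|N\lceil L\rceil|$), I may assume $V$ is the unique minimal LC centre in a neighbourhood of $p_1$.

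When $\dim V=0$, so $V=\{p_1\}$, I would apply Kawamata--Viehweg vanishing to $\mathcal{O}_S(K_S+\lceil L\rceil)\otimes\mathcal{J}(\lambda\Delta_0)$: the $\bQ$-divisor
\[
\lceil L\rceil-\lambda\Delta_0\sim_\bQ(1-\lambda c)L+(\lceil L\rceil-L)
\]
is nef and big since $\lambda c<1$, hence $H^1$ vanishes and the restriction $H^0(S,K_S+\lceil L\rceil)\to\mathcal{O}_{p_1}$ surjects, producing the desired section. The real obstacle is the case $\dim V=1$, so $V=C$ is an irreducible curve through $p_1$. Here condition~(2), $(L\cdot C)\geq\alpha=4/(1+\sqrt{1-8/\beta})$, enters: I would manufacture an auxiliary effective divisor $\Delta_1\sim_\bQ tL$ with $\lambda c+t<1$ whose restriction to $C$ has multiplicity $>1$ at $p_1$, using $\deg(L|_C)\geq\alpha$ together with a Riemann--Roch count on the normalisation of $C$ and the very general position of $p_1$ on $C$. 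Then $(S,\lambda\Delta_0+\Delta_1)$ acquires $\{p_1\}$ as an isolated minimal LC centre, reducing to the previous case. The numerical identity $\alpha^2=\beta(\alpha-2)$, which is equivalent to the given formula for $\alpha$, is precisely what balances the two perturbations and keeps $\lambda c+t<1$; the threshold $\beta\geq 8$ marks the extremal point where $c\to 1$ and $\alpha\to 4$, and this is where the delicate numerical bookkeeping to reconcile the two Riemann--Roch counts is tightest.
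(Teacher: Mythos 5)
The paper offers no proof of this lemma at all --- it is quoted verbatim from \cite{CCCJ} (Theorem~2.8 there) --- so the only comparison available is with the proof in that reference, which is indeed an Angehrn--Siu type argument via multiplier ideals, log canonical centres, tie-breaking and Nadel vanishing. Your strategy is therefore the right family of techniques, but as written it has two genuine gaps. The first is the separation logic. You retain only the point $p_1$ with the smaller log canonical threshold in the non-klt locus; Nadel vanishing then gives surjectivity of $H^0(S,K_S+\lceil L\rceil)\to H^0(\mathcal{O}_S/\mathcal{J})$, but the support of $\mathcal{O}_S/\mathcal{J}$ need not contain $p_2$ at all. This yields a section not vanishing at $p_1$ with no control whatsoever on its value at $p_2$ (and in particular not the section ``vanishing at $p_1$ but not at $p_2$'' you announced). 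To separate, the final pair must be non-klt at \emph{both} points, with $p_1$ an isolated point of the non-klt locus; then a preimage of the element of $H^0(\mathcal{O}_S/\mathcal{J})$ equal to $1$ at $p_1$ and $0$ elsewhere lies in $\mathcal{J}\subseteq\mathfrak{m}_{p_2}$ near $p_2$, hence vanishes at $p_2$ but not at $p_1$. Arranging non-klt-ness at $p_2$ after scaling by $\lambda=\lct_{p_1}$ costs additional positivity that your budget does not account for.

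The second gap is more serious: the numerical bookkeeping you defer is exactly where the theorem lives, and the scheme you describe provably fails near the boundary $\beta=8$. Your initial divisor requires $c>\sqrt{8/L^2}$, which for $L^2$ close to $8$ forces $c$ arbitrarily close to $1$; in the curve case the perturbation needs $t(L\cdot C)>1$, i.e.\ $t>1/\alpha=1/4$ in the worst case, so $\lambda c+t$ can be as large as $5/4$. The identity $\alpha^2=\beta(\alpha-2)$, equivalently $\frac{2}{\alpha}+\frac{\alpha}{\beta}=1$, does not balance \emph{these} two terms. The standard way to reach such constants is to impose a higher initial multiplicity (of order $\alpha$ rather than $2$) at the relevant point, which forces the log canonical threshold down to about $2/\alpha$ and makes the product $\lambda c$ of order $2/\sqrt{\beta}\leq 1/\sqrt{2}$, leaving room for the curve-case correction $1/\alpha$; alternatively one performs a Reider-type analysis of the coefficient of $C$ in $\Delta_0$ when $\lambda$ is close to $1$. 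Without some such refinement your argument does not close, so the proposal should be regarded as a correct outline of the method with the decisive quantitative steps missing.
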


We will use the following version of Theorem~\ref{thm bc 2}.

\begin{cor}\label{usage of thm bc 2}
Keep the setting in Notation~\ref{set up}. Then the $m$-th anti-canonical map $\varphi_{-m}$ is birational if one of the following conditions holds:
\begin{enumerate}[(1)]
\item $m \geq \max\{m_{0}+a(m_{0}), \lceil\mu_{0}'\rceil+4 \nu_{0} r_{\max }-1, \lfloor\mu_{0}'+\sqrt{{8r_{X}}/{N_0} }\rfloor\};$

 \item $\nu_0 r_{\max}\geq\sqrt{\frac{r_{X}}{2N_0}}$ and $$
m \geq \max\left\{m_{0}+a(m_{0}), \left\lfloor\mu_{0}'+2\nu_0 r_{\max}+\frac{r_X}{N_0 \nu_0 r_{\max}}\right\rfloor\right\}.
$$
\end{enumerate}

\end{cor}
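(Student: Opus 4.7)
The plan is to derive both parts by applying Theorem~\ref{thm bc 2} with carefully chosen values of $\beta$. Write $x = 4\nu_0 r_{\max}$ and $y = \sqrt{r_X/N_0}$, so that the three quantities to bound in Theorem~\ref{thm bc 2} are $\frac{x}{1+\sqrt{1-8/\beta}}$ and $y\sqrt{\beta}$ (plus the first, unconditional bound $m_0+a(m_0)$).

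For part (1), I would simply plug in $\beta=8$. Then $\sqrt{1-8/\beta}=0$, so $\frac{4\nu_0 r_{\max}}{1+\sqrt{1-8/\beta}}=4\nu_0 r_{\max}$, and since this is an integer one has $\lceil \mu'_0+4\nu_0 r_{\max}\rceil - 1 = \lceil \mu'_0\rceil+4\nu_0 r_{\max}-1$. The second bound becomes $\lfloor \mu'_0+\sqrt{8r_X/N_0}\rfloor$, giving part (1) verbatim.

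For part (2), I want to choose $\beta$ so that the two expressions $\frac{x}{1+\sqrt{1-8/\beta}}$ and $y\sqrt{\beta}$ both equal the single target $\frac{x}{2}+\frac{4y^2}{x}=2\nu_0 r_{\max}+\frac{r_X}{N_0\nu_0 r_{\max}}$. Setting $y\sqrt{\beta}=\frac{x}{2}+\frac{4y^2}{x}$ forces $\beta=\frac{(x^2+8y^2)^2}{4x^2y^2}$. A short computation then gives $1-\frac{8}{\beta}=\frac{(x^2-8y^2)^2}{(x^2+8y^2)^2}$, whose nonnegativity is automatic; taking square roots under the hypothesis $x^2\geq 8y^2$ (which is exactly $\nu_0r_{\max}\geq\sqrt{r_X/(2N_0)}$) yields $\sqrt{1-8/\beta}=\frac{x^2-8y^2}{x^2+8y^2}$, hence $1+\sqrt{1-8/\beta}=\frac{2x^2}{x^2+8y^2}$ and $\frac{x}{1+\sqrt{1-8/\beta}}=\frac{x^2+8y^2}{2x}=\frac{x}{2}+\frac{4y^2}{x}$, matching the other bound. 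The same hypothesis $x^2\geq 8y^2$ gives $\beta\geq 8$, as required to invoke Theorem~\ref{thm bc 2}.

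With this $\beta$, letting $A=\mu'_0+2\nu_0 r_{\max}+\frac{r_X}{N_0\nu_0 r_{\max}}$, the two bounds in Theorem~\ref{thm bc 2} become $\lceil A\rceil-1$ and $\lfloor A\rfloor$; since always $\lceil A\rceil-1\leq\lfloor A\rfloor$, the condition $m\geq\lfloor A\rfloor$ absorbs both, yielding part (2). The only mild obstacle is the reverse-engineering of $\beta$ in part (2), but once the ansatz $y\sqrt{\beta}=\tfrac{x}{2}+\tfrac{4y^2}{x}$ is guessed, every verification (that $\beta\geq 8$, that the two bounds coincide, and that the hypothesis $x^2\geq 8y^2$ is precisely what is needed to take the square root) falls out of the algebraic identity $(x^2+8y^2)^2-32x^2y^2=(x^2-8y^2)^2$.
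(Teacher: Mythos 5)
Your proposal is correct and follows essentially the same route as the paper: part (1) is Theorem~\ref{thm bc 2} with $\beta=8$, and for part (2) you choose exactly the paper's value $\beta=\frac{N_0}{r_X}\bigl(2\nu_0 r_{\max}+\frac{r_X}{N_0\nu_0 r_{\max}}\bigr)^2=\frac{(x^2+8y^2)^2}{4x^2y^2}$ and verify the same identity (the paper phrases it via $\sqrt{\beta}+\sqrt{\beta-8}$ and an absolute value, which is the same computation). The only cosmetic slip is the remark that the hypothesis $x^2\geq 8y^2$ gives $\beta\geq 8$: in fact $\beta\geq 8$ holds unconditionally since $(x^2+8y^2)^2-32x^2y^2=(x^2-8y^2)^2\geq 0$; the hypothesis is needed only, as you correctly use it, to resolve the sign in $\sqrt{1-8/\beta}$.
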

\begin{proof}
(1) follows directly from \cite[Theorem~5.9]{CJ20} or Theorem~\ref{thm bc 2} with $\beta=8$.
For (2), take 
$$\beta=\frac{N_0}{r_{X}}\left(2\nu_0 r_{\max}+\frac{r_X}{N_0 \nu_0 r_{\max}}\right)^2\geq 8$$
in Theorem~\ref{thm bc 2}.
Then 
\begin{align*}
 \frac{4 \nu_{0} r_{\max }}{1+\sqrt{1-\frac{8}{\beta}}} 
 ={}&\frac{4 \nu_{0} r_{\max }\sqrt{\beta}}{\sqrt{\beta}+\sqrt{\beta-8}}\\
={}&\frac{4 \nu_{0} r_{\max }\sqrt{\beta}}{\sqrt{\frac{N_0}{r_{X}}}\left(2\nu_0 r_{\max}+\frac{r_X}{N_0 \nu_0 r_{\max}}+\left|2\nu_0 r_{\max}-\frac{r_X}{N_0 \nu_0 r_{\max}}\right|\right)}\\
={}&\sqrt{{\beta r_{X}}/{N_0}}.
\end{align*}
So the conclusion follows from Theorem~\ref{thm bc 2}.
\end{proof}

Finally we explain the strategy to apply the birationality criteria to assert the birationality.
It is clear that in order to apply Theorem~\ref{thm bc} and Corollary~\ref{usage of thm bc 2}, we need to control the values of (some of) $\nu_0$, $m_0$, $m_1$, $\mu'_0$, $N_0$ and $r_X, r_{\max}$. To be more precise, we need to give upper bounds of $\nu_0$, $m_0$, $m_1$, $\mu'_0$, $r_X$, $r_{\max}$ and lower bounds of $N_0$. 
Here $m_0$ and $\nu_0$ can be controlled by Proposition~\ref{known} (in particular we can always take $m_0=8$), $\mu'_0$ can be controlled by Remark~\ref{rem mu0}, $N_0$ can be controlled by Lemma~\ref{N_0} (in most cases, we use the trivial lower bound $N_0\geq 1$), and $r_X$, $r_{\max}$ can be controlled by \eqref{gamma>0}. So the most important and difficult part is to bound $m_1$. We will deal with this issue in the next section.

\section{A new criterion for $|-mK|$ not composed with a pencil}\label{sec 4}

In this section, we give a new criterion on when $|-mK_X|$ is not composed with a pencil for a terminal weak $\bQ$-Fano $3$-fold $X$. Such a criterion is essential in order to apply criteria for birationality in Section~\ref{sec 3} (see also \cite{Chen11, CJ16, CJ20}).
In \cite{CJ16}, the following proposition is used to determine when $|-mK|$ is not composed with a pencil.
\begin{prop}[{\cite[Corollary~4.2]{CJ16}}]\label{np1}
Let $X$ be a terminal weak $\bQ$-Fano $3$-fold. If
$$
P_{-m}>r_X(-K_X^3) m+1
$$
for some positive integer $m$, then $|-m K_X|$ is not composed with a pencil.
\end{prop}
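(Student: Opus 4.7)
The plan is to argue by contradiction. Assume that $|-mK_X|$ is composed with a pencil, and apply the construction of Subsection~\ref{r-m} with $D=-mK_X$: one obtains a resolution $\pi\colon W\to X$ and a fibration $f\colon W\to\Gamma\cong\bP^1$ whose general fiber $S$ is a smooth projective surface. The movable part $|M|$ of $|\rounddown{\pi^*(-mK_X)}|$ is base point free, is pulled back from $\bP^1$ via $f$, and has dimension $h^0(X,-mK_X)-1=P_{-m}-1$, so $M\sim f^{*}\OO_{\bP^1}(P_{-m}-1)\sim(P_{-m}-1)S$.

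Next I would intersect with the nef class $\pi^{*}(-K_X)^{2}$. Writing $\pi^{*}(-mK_X)=M+F+\{\pi^{*}(-mK_X)\}$, where $F\geq 0$ is the fixed part of $|\rounddown{\pi^{*}(-mK_X)}|$ and $\{\pi^{*}(-mK_X)\}\geq 0$ is the fractional part, the difference $\pi^{*}(-mK_X)-M$ is $\bQ$-effective, so
\[
m(-K_X^{3})=\pi^{*}(-mK_X)\cdot\pi^{*}(-K_X)^{2}\;\geq\;(P_{-m}-1)\bigl(\pi^{*}(-K_X)^{2}\cdot S\bigr).
\]
Combined with the hypothesis $P_{-m}-1>r_X(-K_X^{3})m$, this forces $\pi^{*}(-K_X)^{2}\cdot S<1/r_X$.

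The final step is an integrality--positivity dichotomy that yields the contradiction. Since $r_XK_X$ is Cartier, $r_X\bigl(\pi^{*}(-K_X)^{2}\cdot S\bigr)$ is a non-negative integer (this is exactly the integrality statement used in the proof of Lemma~\ref{N_0}), and must therefore equal $0$. On the other hand, since $-K_X$ is big one can write $\pi^{*}(-K_X)\sim_{\bQ}A+E$ with $A$ an ample $\bQ$-divisor and $E\geq 0$ on $W$; for a general fiber $S$ of $f$ we have $S\not\subset\Supp(E)$, so $\pi^{*}(-K_X)|_{S}\sim_{\bQ}A|_{S}+E|_{S}$ is nef and big on the smooth surface $S$, which forces $\bigl(\pi^{*}(-K_X)|_{S}\bigr)^{2}>0$, contradicting $\pi^{*}(-K_X)^{2}\cdot S=0$.

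There is no serious obstacle here. The one point requiring care is the integrality of $r_X\bigl(\pi^{*}(-K_X)^{2}\cdot S\bigr)$, which is taken as a black box from the earlier literature; the rest is a clean combination of a nef/effective intersection estimate with the bigness of $-K_X$.
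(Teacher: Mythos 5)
Your argument is correct and follows essentially the same route as the paper's (which is recalled in the proof of Proposition~\ref{np1 equality}): decompose $\pi^*(-mK_X)\sim (P_{-m}-1)S+(\text{effective})$, intersect with the nef class $\pi^{*}(-K_X)^{2}$, and conclude via the integrality of $r_X(\pi^{*}(-K_X)^{2}\cdot S)$ from \cite[Lemma~4.1]{CJ16}. The only cosmetic differences are that you supply the positivity of $\pi^{*}(-K_X)^{2}\cdot S$ separately via Kodaira's lemma (the cited lemma already asserts it is a \emph{positive} integer), and your identification $M\sim f^{*}\OO_{\bP^1}(P_{-m}-1)$ should strictly be an inequality $M\succeq (P_{-m}-1)S$, which is harmless since it only strengthens the estimate.
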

However, Proposition~\ref{np1} is too weak for application, especially when $r_X(-K_X^3)$ is large (see Example~\ref{43}). In \cite{CJ20}, there is a modification of this inequality (cf. \cite[Lemma~4.2, Proposition~5.2]{CJ20}) but one has to replace $X$ with a birational model. In this paper, by technique developed recently in \cite{JZ21}, we give a new criterion.
\begin{thm}\label{npthm}
Let $X$ be a terminal weak $\bQ$-Fano $3$-fold. If 
$$
P_{-m}>12m+1
$$
for some positive integer $m$, then $|-mK_X|$ is not composed with a pencil.
\end{thm}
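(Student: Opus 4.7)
The plan is to argue by contrapositive: assuming $|-mK_X|$ is composed with a pencil, I will show that $P_{-m}\leq 12m+1$.

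First I set up the pencil structure as in Subsection~\ref{r-m}: take a resolution $\pi\colon W\to X$, let $|M|$ be the movable part of $|\lfloor\pi^*(-mK_X)\rfloor|$, and let $S$ be a generic irreducible element. Because $|-mK_X|$ is composed with a pencil, the Stein factorization gives a fibration $f\colon W\to\mathbb{P}^1$ whose general fibre is $S$; in particular $S|_S\equiv 0$, and since $f_*\mathcal{O}_W=\mathcal{O}_{\mathbb{P}^1}$ while $h^0(W,M)=P_{-m}$, we have $M\sim (P_{-m}-1)\,S$ and $\pi^*(-mK_X)-M$ is an effective $\mathbb{Q}$-divisor.

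Next, intersecting the relation $\pi^*(-mK_X)\geq M\sim (P_{-m}-1)\,S$ with the nef class $\pi^*(-K_X)^2$ gives
\[
m(-K_X)^3 \;=\; \pi^*(-K_X)^2\cdot\pi^*(-mK_X) \;\geq\; (P_{-m}-1)\bigl(\pi^*(-K_X)^2\cdot S\bigr).
\]
The whole argument therefore reduces to the key lemma
\[
\pi^*(-K_X)^2\cdot S \;\geq\; \tfrac{1}{12}(-K_X)^3,
\]
from which the two inequalities combine to give $P_{-m}\leq 12m+1$, contradicting the hypothesis.

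The heart of the proof, and the main obstacle, is this key lemma. Naive attempts via Khovanskii--Teissier or Hodge index are vacuous since $S^3=S^2\cdot\pi^*(-K_X)=0$, so the argument must use genuine information about weak $\mathbb{Q}$-Fano $3$-folds. The plan is to follow the technique of \cite{JZ21}: writing $K_W=\pi^*K_X+E$ with $E$ effective and $\pi$-exceptional, adjunction gives $K_S=K_W|_S=-L+E|_S$ where $L:=\pi^*(-K_X)|_S$ is nef and $E|_S$ is effective, and combining Riemann--Roch on $S$ with the iterated restriction sequence for $\mathcal{O}_W(\pi^*(-mK_X))$ along $S$ and Kawamata--Viehweg vanishing controls $P_{-m}$ via the splitting type of $f_*\mathcal{O}_W(\pi^*(-mK_X))$ on $\mathbb{P}^1$. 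Matching this against the coefficient $\tfrac{n(n+1)(2n+1)}{12}$ of the leading term in Reid's orbifold Riemann--Roch on $X$ produces the factor $12$. The delicate point is quantifying how much of $\pi^*(-mK_X)$ the fixed part can absorb: without the careful bookkeeping developed in \cite{JZ21}, one obtains only a weaker constant in place of $1/12$.
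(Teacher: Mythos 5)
Your contrapositive set-up and the intersection-theoretic reduction are fine: intersecting $\pi^*(-mK_X)\geq (P_{-m}-1)S$ with $\pi^*(-K_X)^2$ is exactly how Proposition~\ref{np1} is obtained, and if your ``key lemma'' $\pi^*(-K_X)^2\cdot S\geq \tfrac{1}{12}(-K_X)^3$ held, the theorem would indeed follow. But that key lemma is the entire content of the statement, and you have not proved it --- you have only gestured at a strategy, and the strategy points in the wrong direction. The factor $12$ does not come from the denominator in Reid's Riemann--Roch formula (whose leading coefficient is $\tfrac{1}{6}(-K_X)^3$, not $\tfrac{1}{12}$), and no amount of matching asymptotics of $h^0$ along the fibration will produce it: the obstruction is that on $W$ (or $X$) there is no structure forcing a general member of the pencil to capture a definite fraction of the anticanonical volume, which is why Khovanskii--Teissier-type arguments are vacuous here, as you yourself note.

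The paper's proof takes a genuinely different route and never establishes (nor needs) your key inequality. It invokes the structure theorem Proposition~\ref{prop MFS} to pass to a birational model $Y$ carrying a Mori-fibre-space-type morphism $g:Y\to S$ with $S$ a point, $\mathbb{P}^1$, or a del Pezzo surface, pushes the relation $\pi^*(-mK_X)\sim (P_{-m}-1)F_W+E$ forward to $Y$ to get $-K_Y\sim_{\mathbb{Q}}\omega\, q_*F_W+E_Y$ with $\omega=\tfrac{P_{-m}-1}{m}$, and then bounds $\omega\leq 12$ case by case. The number $12$ arises from \cite[Theorem~3.3]{JZ21} (a log-canonical-threshold-type bound $\tfrac{2}{\omega}\geq\tfrac16$ for anticanonical decompositions on del Pezzo surfaces, applied via the connectedness lemma and inversion of adjunction in Lemma~\ref{non-klt}, and via an explicit surface computation in Lemma~\ref{non-klt2}); in the remaining cases the bounds are $7$ (Lemma~\ref{lem picard 1}, using the classification of $\mathbb{Q}$-Fano threefolds of large $\mathbb{Q}$-Fano index) and $4$ (Lemma~\ref{lct}). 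These are the genuinely geometric inputs your sketch is missing, and without passing to the model $Y$ I do not see how to make your approach close. As written, the proposal has a gap at its central step.
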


\subsection{A structure theorem of terminal weak $\bQ$-Fano $3$-folds}
We recall the following structure theorem of terminal weak $\bQ$-Fano $3$-folds from \cite{JZ21}. It plays the role of Fano--Mori triples as in \cite{CJ20}.
Unlike \cite{CJ20}, we do not need to replace by a birational model (cf. \cite[Proposition~3.9]{CJ20}). 

\begin{prop}[{\cite[Proposition~4.1]{JZ21}}]\label{prop MFS}
Let $X$ be a terminal weak $\bQ$-Fano $3$-fold. 
Then there exists a normal projective $3$-fold $Y$ birational to $X$ satisfying the following properties:
\begin{enumerate}[(1)]
 \item $Y$ is $\mathbb{Q}$-factorial terminal; 
 \item $-K_Y$ is big;
 \item for any sufficiently large and divisible positive integer $n$, $|-nK_Y|$ is movable;
 \item for a general member $M\in |-nK_Y|$, $M$ is irreducible and $(Y, \frac{1}{n}M)$ is canonical;
 
 \item there exists a projective morphism $g: Y\to S$ with connected fibers where $F$ is a general fiber of $g$, such that one of the following conditions holds:
 \begin{enumerate}
 \item $S$ is a point and $Y$ is a $\bQ$-Fano $3$-fold with $\rho(Y)=1$;
 \item $S= \mathbb{P}^1$ and $F$ is a smooth weak del Pezzo surface;
 \item $S$ is a del Pezzo surface with at worst Du Val singularities and $\rho(S)=1$, and $F\simeq \mathbb{P}^1$.
 \end{enumerate}
\end{enumerate}
\end{prop}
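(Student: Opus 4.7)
The plan is to construct $Y$ from $X$ by running a suitable MMP. First, I would replace $X$ by a small $\bQ$-factorial terminalization (available by \cite[Corollary~6.25]{KM}), which preserves the property of being terminal weak $\bQ$-Fano. Since $X$ is then klt with $-K_X$ nef and big, the Kawamata--Shokurov base-point-free theorem yields that $-nK_X$ is base-point-free for all sufficiently large and divisible $n$, so $|-nK_X|$ is already movable on $X$, giving (3) for the initial model. A general $M\in|-nK_X|$ is irreducible by Bertini, and by a standard Bertini/Kawamata-type argument for base-point-free systems on klt varieties (the coefficient $\frac{1}{n}$ can be taken arbitrarily small), the pair $(X,\frac{1}{n}M)$ is canonical, giving (4).

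To produce the Mori fiber space structure (5), I would run a $K_X$-MMP. Since $-K_X$ is big, $K_X$ is not pseudo-effective, and so by \cite{bir19} (or BCHM) this MMP terminates at a Mori fiber space $g\colon Y\to S$, with $Y$ $\bQ$-factorial terminal, $-K_Y$ big, $-K_Y$ $g$-ample, and $\rho(Y/S)=1$. The case analysis is by $\dim S$: if $\dim S=0$, then $\rho(Y)=1$ forces $-K_Y$ ample (nef and big on Picard rank one), giving case (a); if $\dim S=1$, then $Y$ is rationally connected (inherited from $X$ via birational invariance of rational connectedness for smooth resolutions) and the base of a fibration from an RC variety is itself RC, so $S\simeq\bP^1$, while a general fiber $F$ is smooth by generic smoothness in characteristic $0$ with $-K_F$ ample from the $g$-ampleness of $-K_Y$, hence a smooth del Pezzo and in particular a weak del Pezzo, giving case (b); if $\dim S=2$, then a general fiber is a smooth curve with ample anti-canonical class, hence $F\simeq\bP^1$, and to achieve $\rho(S)=1$ with Du Val singularities I would further run a $K_S$-MMP on the normal surface $S$, lifting each contraction to $Y$ via a two-ray game, until $\rho(S)=1$, yielding a del Pezzo $S$ with at worst Du Val singularities, giving case (c).

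The main obstacle is to ensure properties (3) and (4) survive the $K$-MMP, since after a flip $-K_Y$ may fail to be semiample, so $|-nK_Y|$ could a priori acquire fixed components and general members could develop non-canonical singularities. The key observation is that $K_X+\frac{1}{n}M\sim_\bQ 0$ makes every $K$-negative extremal ray $R$ simultaneously $(K+\frac{1}{n}M)$-trivial: thus each step of the $K$-MMP is a crepant (flop or crepant divisorial contraction) step of the MMP for the pair $(X,\frac{1}{n}M)$. Crepant steps preserve canonicity of the pair, giving (4) on $Y$; and since $M\cdot R=-n\,K\cdot R>0$, no component of $M$ is contained in the exceptional locus of any contracted or flipped ray, so $M$ pushes forward to a still-movable member of $|-nK_Y|$ for large divisible $n$, giving (3) on $Y$. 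The same crepant-MMP principle, applied relative to $S$, also controls the surface reduction in case (c).
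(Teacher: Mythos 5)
The paper does not reprove this statement: it cites \cite{JZ21} for it and only records (in the remark after the proposition) that $Y$ is obtained by running a $K$-MMP on a $\bQ$-factorialization of $X$. Your proposal reconstructs exactly that strategy, and your central mechanism is the right one: since $K+\frac{1}{n}M\sim_{\bQ}0$, every $K$-negative extremal ray is $(K+\frac{1}{n}M)$-trivial, so each MMP step is crepant for the pair, which preserves canonicity of $(Y,\frac{1}{n}M)$ and the absence of fixed components in $|-nK_Y|$ all the way to the Mori fiber space.

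There is, however, a genuine gap in your treatment of the case $\dim S=2$. You propose to run a $K_S$-MMP on the base ``until $\rho(S)=1$,'' but a $K$-MMP on a rational surface need not terminate at a Picard-number-one del Pezzo: it can terminate at a Mori fiber space $S'\to\mathbb{P}^1$ with $\rho(S')=2$ (already $S'=\mathbb{F}_e$ shows there may be no further divisorial contraction available), and then $\rho=1$ is simply unreachable by your procedure. In that sub-case one must instead pass to the composite $Y\to S'\to\mathbb{P}^1$ and check that its general fiber $F$ (a conic bundle over a fiber of $S'\to\mathbb{P}^1$) is a smooth surface with $-K_F$ nef and big: nef because $|-nK_Y|$ has no fixed component and a general $F$ contains no curve of its base locus, big because $-K_Y$ is big and $F$ moves in a free pencil. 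This outcome lands in case (5)(b), and it is precisely why that case asserts only ``weak del Pezzo'': your derivation of (b), coming solely from $\dim S=1$, produces genuine del Pezzo fibers and cannot account for it, so your trichotomy is incomplete as written. Two smaller points: the claim that the base of the conic bundle has only Du Val singularities is a theorem of Mori--Prokhorov and needs to be invoked rather than asserted; and after each two-ray-game step one should note that the new steps on $Y$ are again $K$-negative, hence crepant for the pair, so that properties (1)--(4) persist (your final remark does cover this).
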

Here we remark that in the proof of 
\cite[Proposition~4.1]{JZ21}, $Y$ is obtained by running a $K$-MMP on a $\bQ$-factorialization of $X$, so the induced map $X\dashrightarrow Y$ is a contraction, that is, it does not extract any divisor.

\subsection{Bounding coefficients of anti-canonical divisors}

In this subsection, we discuss coefficients of certain divisors in the $\mathbb{Q}$-linear system of the anti-canonical divisor in several cases.

\begin{lem}\label{lct}
Let $S$ be a smooth weak del Pezzo surface and $C$ a non-zero effective integral divisor on $S$ which is movable. If $-K_S\sim_{\bQ}aC+B$ for some positive rational number $a$ and some effective $\bQ$-divisor $B$, then $a\leq4$.
\end{lem}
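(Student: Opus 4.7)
The plan is to split into two cases depending on whether $C^2 > 0$ or $C^2 = 0$. In both cases I would use that $S$ is smooth weak del Pezzo, hence rational with $-K_S$ nef and big and $K_S^2 \leq 9$, and that a movable effective divisor on a smooth surface is automatically nef (a general member of $|mC|$ shares no component with any prescribed irreducible curve, so $C \cdot E \geq 0$ for every such $E$). Writing $-K_S - aC \sim_\bQ B$ with $B$ effective, hence pseudo-effective, and intersecting with the two nef classes $-K_S$ and $C$, one immediately gets
\begin{equation*}
K_S^2 \;\geq\; a(-K_S \cdot C) \;\geq\; a^2 C^2.
\end{equation*}

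First I would dispose of the case $C^2 \geq 1$: the displayed inequality yields $a^2 \leq K_S^2 \leq 9$, so $a \leq 3$, which is more than enough.

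The main case, and the main obstacle, is $C^2 = 0$. Here I would first bound $-K_S \cdot C$ from below by $2$. The Hodge index theorem applied to the nef classes $-K_S$ (with $(-K_S)^2 > 0$) and $C$ excludes $-K_S \cdot C = 0$, so $-K_S \cdot C \geq 1$; to upgrade this to $-K_S \cdot C \geq 2$ I would invoke adjunction in the form $-K_S \cdot C + C^2 = 2 - 2p_a(C)$, which when $C^2 = 0$ forces $-K_S \cdot C$ to be even, and hence at least $2$. Next I would rule out $K_S^2 = 9$: otherwise $S = \PPP^2$, whose Picard group is generated by a class of self-intersection $1$, forcing every non-zero effective integral divisor to satisfy $C^2 \geq 1$, contradicting $C^2 = 0$. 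Hence $K_S^2 \leq 8$, and the first inequality of the display gives $a \leq K_S^2/(-K_S \cdot C) \leq 8/2 = 4$.

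The non-trivial ingredients in the $C^2 = 0$ case are the parity upgrade via adjunction and the exclusion of $\PPP^2$, since the naive intersection bound on its own only yields $a \leq 9$; together they produce the sharp value $a \leq 4$, attained on $S = \mathbb{F}_2$ with $C$ a ruling fiber and $B$ twice the negative section.
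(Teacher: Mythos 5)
Your argument is correct, and it takes a genuinely different route from the paper. The paper contracts $S$ to a minimal rational surface ($\mathbb{P}^2$, $\mathbb{F}_0$ or $\mathbb{F}_2$), pushes the relation $-K_S\sim_{\bQ}aC+B$ forward (noting that the movable $C$ is not contracted), and then reads off the bound case by case from rulings and degree computations; the worst case is a vertical $C$ on $\mathbb{F}_2$, giving $a\le K_S^2/(-K_S\cdot C)=8/2=4$. You instead work intrinsically on $S$: movability of $C$ gives nefness (the movable and nef cones coincide on a surface), pairing $B\sim_{\bQ}-K_S-aC$ against the nef classes $-K_S$ and $C$ gives $K_S^2\ge a(-K_S\cdot C)\ge a^2C^2$, and the only delicate case $C^2=0$ is handled by the Hodge index theorem plus the parity $C^2\equiv K_S\cdot C\pmod 2$ (which, as you note, follows from adjunction, or just from integrality of $\chi$ in Riemann--Roch and so needs no hypothesis on $C$) together with the exclusion of $K_S^2=9$. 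Each step checks out, including the claim that $K_S^2=9$ forces $S\simeq\mathbb{P}^2$ for a weak del Pezzo surface. What your approach buys is a cleaner, pushforward-free argument that makes the origin of the constant $4$ transparent ($K_S^2\le 8$ and $-K_S\cdot C\ge 2$) and exhibits the extremal configuration on $\mathbb{F}_2$; what the paper's buys is that it needs nothing beyond the classification of minimal rational surfaces and direct degree counts, and in particular sidesteps the nefness-of-movable-divisors discussion.
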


\begin{proof}
By classical surface theory, it is well-known that there is a birational map from $S$ to $\mathbb{P}^2$ or the Hirzebruch surface $\mathbb{F}_0$ or $\mathbb{F}_2$. So by taking pushforward, we may replace $S$ by $\mathbb{P}^2$ or $\mathbb{F}_0$ or $\mathbb{F}_2$. Here $C$ is not contracted by the pushforward as it is movable.

If $S=\mathbb{P}^2$, then intersecting with a general line $L$, we get $a\leq a(C\cdot L)\leq(-K_S\cdot L)=3$. 

If $S=\mathbb{F}_0$, then we may find a ruling structure $\phi: \mathbb{F}_0\to \mathbb{P}^1$ such that $C$ is not vertical. Then we get $a\leq 2$ by intersecting with a fiber of $\phi$.

If $S=\mathbb{F}_2$, then we consider the natural ruling structure $\phi: \mathbb{F}_2\to \mathbb{P}^1$. If $C$ is not vertical, then intersecting with a fiber of $\phi$, we get $a\leq 2$. If $C$ is vertical, then intersecting with $-K_S$ we get
$a(-K_S\cdot C)\leq K_S^2$ which implies that $a\leq 4$.

\end{proof}

\begin{lem}\label{lem picard 1}
Let $Y$ be a $\mathbb{Q}$-factorial terminal $\bQ$-Fano $3$-fold with $\rho(Y)=1$ and $D$ an
integral divisor with $h^0(D)\geq 2$. If $-K_Y\sim_{\bQ}aD+B$ for some positive rational number $a$ and some effective $\bQ$-divisor $B$, then $a\leq 7$. Moreover, the equality holds if and only if $Y\simeq \mathbb{P}(1, 1, 2, 3)$, $B=0$, and $\mathcal{O}_Y(D)\simeq \mathcal{O}_{Y}(1)$.
\end{lem}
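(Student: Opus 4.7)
The strategy is to use the Picard rank one hypothesis to reduce everything to a numerical slope inequality in $N^{1}(Y)_{\mathbb{Q}}$, and then to invoke the classification of terminal $\mathbb{Q}$-factorial $\mathbb{Q}$-Fano $3$-folds of Picard number one with large Fano index. Since $N^{1}(Y)_{\mathbb{Q}}\cong\mathbb{Q}$ and $-K_{Y}$ is ample, there is a unique $\lambda\in\mathbb{Q}_{>0}$ with $-K_{Y}\equiv\lambda D$. The relation $-K_{Y}\sim_{\mathbb{Q}}aD+B$ gives $B\equiv(\lambda-a)D$, so the pseudo-effectivity of $B$ forces $a\leq\lambda$, with equality if and only if $B\equiv 0$; since $Y$ is $\mathbb{Q}$-factorial $\mathbb{Q}$-Fano and $B$ is effective, $B\equiv 0$ implies $B=0$. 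Hence the problem reduces to showing $\lambda\leq 7$, with equality characterising the claimed extremal case.

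To bound $\lambda$, I would introduce the Fano index: choose an ample Weil divisor $A$ generating $\operatorname{Cl}(Y)/(\text{torsion})$ and define $q_{Y}\in\mathbb{Z}_{>0}$ by $-K_{Y}\equiv q_{Y}A$; writing $D\equiv dA$ for some $d\in\mathbb{Z}_{\geq 1}$ gives $\lambda=q_{Y}/d$, and the condition $h^{0}(Y,D)\geq 2$ amounts to the existence of an effective integral Weil divisor in a class $dA+\tau$ with $h^{0}\geq 2$ for some torsion class $\tau\in\operatorname{Cl}(Y)$. For $q_{Y}\leq 7$ the bound $\lambda=q_{Y}/d\leq 7$ is automatic, so the content lies in the cases $q_{Y}\geq 8$, where I would invoke the Suzuki--Prokhorov classification of terminal $\mathbb{Q}$-factorial $\mathbb{Q}$-Fano $3$-folds of Picard rank one with large Fano index (see \cite{Prok05,Prok07,Prok10}). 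This classification restricts $Y$ to an explicit short list (weighted projective spaces such as $\mathbb{P}(1,2,3,5)$, $\mathbb{P}(1,3,4,5)$, $\mathbb{P}(2,3,5,7)$, $\mathbb{P}(3,4,5,7)$, together with a handful of weighted hypersurfaces and complete intersections), and in each case one reads off the smallest $d$ for which the class $dA+\tau$ carries an effective integral Weil divisor with two independent global sections by a direct monomial count, confirming $q_{Y}/d<7$.

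The equality case $\lambda=7$ then forces $q_{Y}=7$, $d=1$, $B=0$ and $h^{0}(A+\tau)\geq 2$; the index-$7$ portion of the classification singles out $Y\simeq\mathbb{P}(1,1,2,3)$ with $\mathcal{O}_{Y}(D)\simeq\mathcal{O}_{Y}(1)$, matching the statement. The principal obstacle is precisely this classification input: the bound on $q_{Y}$ together with the uniqueness of $\mathbb{P}(1,1,2,3)$ among index-$7$ examples carrying a primitive movable Weil class is a non-trivial ingredient from \cite{Prok05,Prok07,Prok10}. Any elementary bypass would need to control $(-K_{Y})^{3}$, $A^{3}$ and the relevant intersection numbers finely enough to rule out all other extremal candidates, which in practice recovers essentially the same classification.
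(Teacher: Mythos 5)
Your proposal is correct and follows essentially the same route as the paper: both reduce via $\rho(Y)=1$ to comparing $a$ with the $\bQ$-Fano index $q_{\bQ}(Y)$, and both then invoke Prokhorov's classification of terminal $\bQ$-Fano $3$-folds of large index (the paper cites \cite[Corollary~3.4(ii), Theorem~1.4(vi)]{Prok10}, which packages your case-by-case check of $q_Y\geq 8$ and the index-$7$ equality case into a single statement using $h^0(D)\geq 2$). The only difference is presentational: the paper gets $\mathbb{P}(1,1,2,3)$ in one step from the existence of two distinct effective divisors in $|D|$, rather than running through the explicit list of large-index examples.
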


\begin{proof}
Suppose that $a\geq 7$. 
As $\rho(Y)=1$, we have $-K_Y\sim_{\bQ}tD$ for some rational number $t\geq a \geq 7$. Recall that the {\it $\mathbb{Q}$-Fano index} of $Y$ is defined by 
 $$q\bQ(Y)=\max \{q \mid-K_{Y} \sim_{\bQ} q A, A \text{ is a Weil divisor}\}.$$
 By \cite[Corollary~3.4(ii)]{Prok10}, $t=q\bQ(Y)\geq 7$. As $h^0(D)\geq 2$, there are two different effective divisors $D_1, D_2\in |D|$ such that 
 $-K_Y\sim_{\bQ}tD_1\sim_{\bQ}tD_2$, which implies that $Y\simeq \mathbb{P}(1, 1, 2, 3)$ by 
 \cite[Theorem~1.4(vi)]{Prok10}. But in this case $t=q\bQ(Y)=7$. Hence $a=7$, $B=0$, and $\mathcal{O}_Y(D)\simeq \mathcal{O}_{Y}(1)$.
\end{proof}

\begin{lem}\label{non-klt}
Keep the setting in Proposition~\ref{prop MFS}, suppose that $S=\mathbb{P}^1$. If $-K_Y\sim_{\bQ}\omega F+E$ for some positive rational number $\omega$ and some effective $\bQ$-divisor $E$, then $\omega\leq12.$
\end{lem}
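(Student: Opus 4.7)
The approach exploits the fibration $g\colon Y\to\mathbb{P}^1$ together with $F^2\equiv 0$ (as $F$ is a fiber class) and the weak del Pezzo geometry of the general fiber $F$.

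First, I would reduce to the case where $F$ is not a component of $E$: any $F$-component of $E$ can be absorbed into $\omega F$, which only increases $\omega$, so it suffices to prove the bound under this hypothesis. Restricting $-K_Y\sim_\bQ\omega F+E$ to a general fiber then gives $-K_F\sim_\bQ E|_F$ on the smooth weak del Pezzo surface $F$, since $F|_F\equiv 0$.

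To extract a bound on $\omega$ itself, I would pair $-K_Y\sim_\bQ\omega F+E$ with a movable horizontal curve class $C$ on $Y$ (that is, $F\cdot C>0$). Since $E\ge 0$ and $C$ is movable, $(E\cdot C)\ge 0$, and hence
\[
\omega\;\le\;\frac{(-K_Y\cdot C)}{(F\cdot C)}.
\]
The problem is thus reduced to producing such a $C$ with this ratio at most $12$. A natural candidate is an extremal ray of $\overline{NE}(Y)$ distinct from the fiber ray of $g$: since $\rho(Y)\ge 2$ (as $g$ is non-trivial) and $Y$ is terminal, the cone theorem supplies extremal rays generated by rational curves of bounded $(-K_Y)$-degree, and when such a ray is horizontal for $g$ one obtains the bound directly.

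The main obstacle is the case in which the accessible extremal curves turn out to be vertical; one must then construct a horizontal movable curve by a different mechanism, namely by lifting a movable integral curve from the fiber $F$ to a horizontal covering family on $Y$. Here the canonicity of the general member of $|-nK_Y|$ provided by Proposition~\ref{prop MFS}(4) plays the key role in controlling the lift. Applying Lemma~\ref{lct} to the restricted relation $E|_F\sim_\bQ-K_F$ on the weak del Pezzo $F$ bounds anticanonical coefficients on $F$ by $4$, and combining this with a factor of $3$ arising from the dimension/degree shift in passing between $Y$ and $F$ --- or equivalently from the Noether identity $K_F^2+c_2(F)=12$ for the rational surface $F$ --- yields the desired bound $\omega\le 12$. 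The technical heart of the argument is the careful bookkeeping of these two constants together with the construction of the horizontal lift.
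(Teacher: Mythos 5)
There is a genuine gap: the step that actually produces the bound $\omega\leq12$ is never carried out. Your opening observation is correct but also explains why the naive restriction fails --- since $F|_F\equiv 0$, restricting to a fiber kills the $\omega F$ term entirely, so all the work must go into finding a \emph{horizontal} mechanism. Your first candidate, a non-fiber extremal ray of $\overline{NE}(Y)$, does not work as stated: Proposition~\ref{prop MFS} only guarantees that $-K_Y$ is big, not nef, so the second extremal ray of $\overline{NE}(Y)$ need not be $K_Y$-negative and the cone theorem gives no length bound on it; and even a $K_Y$-negative extremal curve (e.g.\ a flipping curve) need not be movable, so it may lie in $\Supp E$ and $(E\cdot C)\geq 0$ can fail, destroying the inequality $\omega\leq(-K_Y\cdot C)/(F\cdot C)$. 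Your fallback --- ``lifting a movable integral curve from the fiber $F$ to a horizontal covering family'' --- is not constructed and is not a routine deformation statement; curves in $F$ are vertical, and no mechanism is given for producing horizontal deformations with controlled $(-K_Y)$-degree. Finally, the closing paragraph is numerology rather than argument: Lemma~\ref{lct} applied to $E|_F\sim_\bQ -K_F$ constrains the coefficient of a movable curve inside $E|_F$, which involves $\omega$ in no way, and the Noether identity $K_F^2+c_2(F)=12$ plays no role here; the coincidence $4\times 3=12$ is not a proof.

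The paper's actual proof is of a completely different nature. Assuming $\omega>2$, one forms the boundary $\Delta=(1-\frac{2}{\omega})B_{\epsilon}+\frac{2}{\omega}E+F_1+F_2$ with two general fibers $F_1,F_2$, checks that $-(K_Y+\Delta)$ is ample, and applies the connectedness lemma to conclude that $\Nklt(Y,\Delta)$ is connected and hence dominates $\bP^1$. Inversion of adjunction then forces $(F,(1-\frac{2}{\omega})\frac{1}{n}M|_F+\frac{2}{\omega}E|_F)$ to be non-klt on a very general fiber, while $(F,\frac{1}{n}M|_F)$ is canonical; the quantitative input \cite[Theorem~3.3]{JZ21} on such pairs on a weak del Pezzo surface yields $\frac{2}{\omega}\geq\frac{1}{6}$, i.e.\ $\omega\leq 12$. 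So the constant $12$ comes from a log-canonical-threshold-type bound on the del Pezzo fiber, not from any intersection-theoretic pairing with a horizontal curve. If you want to salvage your approach you would need to supply the missing horizontal curve with both movability and a degree bound, which is precisely what the hypotheses do not provide.
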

This lemma is from the proof of \cite[Proposition~4.2]{JZ21}. For the reader's convenience, we recall the proof here.
\begin{proof}
We may assume that $\omega>2$. By Proposition~\ref{prop MFS}(3)(4), for a sufficiently large and divisible integer $n$, $|-nK_Y|$ is movable, and there exists an effective $\bQ$-divisor $M\sim -nK_Y$ such that $(Y, \frac{1}{n}M)$ is canonical. Since $-K_Y$ is big, we can write $-K_Y\sim_{\bQ}A+N$, where $A$ is an ample $\bQ$-divisor and $N$ is an effective $\bQ$-divisor. Set $B_{\epsilon}=\frac{1-\epsilon}{n}M+\epsilon N$ for a rational number $0<\epsilon<1$.
Take two general fibers $F_1, F_2$ of $g$. Denote
$$
\Delta=(1-\frac{2}{\omega})B_{\epsilon}+\frac{2}{\omega}E+F_1+F_2.
$$
Then
$$
-(K_Y+\Delta)\sim_{\bQ}-(1-\frac{2}{\omega})(K_Y+B_{\epsilon})\sim_{\bQ}(1-\frac{2}{\omega})\epsilon A
$$
is ample as $\omega>2$. Hence by the connectedness lemma (\cite[Lemma~2.6]{JZ21}), $\Nklt(Y, \Delta)$ is connected. By construction, $F_1\cup F_2\subset\Nklt(Y, \Delta)$, then $\Nklt(Y, \Delta)$ dominates $\bP^1$. By the inversion of adjunction (\cite[Lemma~5.50]{KM}), $(F, (1-\frac{2}{\omega})B_{\epsilon}|_F+\frac{2}{\omega}E|_F)$ is not klt for a general fiber $F$ of $g$. As being klt is an open condition on the coefficients, by the arbitrariness of $\epsilon$, 
it follows that $(F, (1-\frac{2}{\omega})\frac{1}{n}M|_F+\frac{2}{\omega}E|_F)$ is not klt for a very general fiber $F$ of $g$.

On the other hand, as $(Y, \frac{1}{n}M)$ is canonical, $(F, \frac{1}{n}M|_F)$ is canonical by Bertini's theorem (see \cite[Lemma~5.17]{KM}).
Since $M$ is a general member of a movable linear system by assumption, $M|_F$ is a general member of a movable linear system on $F$. So each irreducible component of $M|_F$ is nef. Also we can take $M$ such that $M|_F$ and $E|_F$ have no common irreducible component. By construction, $\frac{1}{n}M|_F\sim_\bQ E|_F\sim_\bQ -K_F$. 
So we can apply \cite[Theorem~3.3]{JZ21} to $F, \frac{1}{n}M|_F, E|_F$, which implies that $
\frac{2}{\omega}\geq \frac{1}{6}.$ Hence $\omega\leq12$.
\end{proof}
 
\begin{lem}\label{non-klt2}
Keep the setting in Proposition~\ref{prop MFS}, suppose that $S$ is a del Pezzo surface. Suppose that $D$ is a non-zero effective integral divisor on $Y$ which is movable. If $-K_Y\sim_{\bQ}\omega D+E$ for some positive rational number $\omega$ and some effective $\bQ$-divisor $E$, then $\omega\leq12.$
\end{lem}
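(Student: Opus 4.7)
The plan splits into cases depending on whether $D$ is horizontal or vertical with respect to the fibration $g\colon Y\to S$ from Proposition~\ref{prop MFS}(5)(c).

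In the horizontal case $g(D)=S$, restricting $-K_Y\sim_{\bQ}\omega D+E$ to a general fiber $F\simeq\PPP^{1}$ (whose normal bundle in $Y$ is trivial, giving $-K_Y\cdot F=-K_F\cdot F=2$) combined with $D\cdot F\geq 1$ and $E\cdot F\geq 0$ immediately yields $\omega\leq 2$, well within the claimed bound. So I now assume $D$ is vertical.

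Since $D$ is vertical, movable, and $g$ is a Mori fibration with $\rho(S)=1$ (so $\rho(Y/S)=1$), every prime component of $D$ is numerically proportional to the pullback of a prime divisor on $S$, and we obtain $D\sim_{\bQ}g^{*}C$ for some movable integral Weil divisor $C$ on $S$. Writing $\mathrm{Cl}(S)=\ZZ A$ with $-K_S=q_S A$ (where $q_S$ is the Fano index of $S$) and $C\sim kA$ for some integer $k\geq 1$, we get $C\cdot(-K_S)=kq_S A^{2}=kK_S^{2}/q_S\geq K_S^{2}/q_S$.

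The main step is a numerical computation. Intersect both sides of $-K_Y\sim_{\bQ}\omega g^{*}C+E$ with the nef $1$-cycle $(-K_Y)\cdot g^{*}(-K_S)$, and use $g^{*}C\cdot g^{*}(-K_S)=(C\cdot(-K_S))_S\,[F]$ together with $(-K_Y)\cdot F=2$ to obtain
\[
 2\omega\,(C\cdot(-K_S))+E\cdot(-K_Y)\cdot g^{*}(-K_S)=(-K_Y)^{2}\cdot g^{*}(-K_S).
\]
The second term on the left is $\geq 0$ because $E$ is effective and $(-K_Y)\cdot g^{*}(-K_S)$ is a nef $1$-cycle, so $2\omega\,(C\cdot(-K_S))\leq(-K_Y)^{2}\cdot g^{*}(-K_S)$. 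The right-hand side is at most $4K_S^{2}$: equality holds in the $\PPP^{1}$-bundle case, and for a general conic bundle the discriminant contribution only lowers it. Combining with the lower bound on $C\cdot(-K_S)$ yields
\[
 \omega\leq\frac{4K_S^{2}}{2(C\cdot(-K_S))}\leq\frac{2q_S}{k}\leq 2q_S.
\]

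The hard parts of the plan are the two auxiliary inequalities: first, $(-K_Y)^{2}\cdot g^{*}(-K_S)\leq 4K_S^{2}$, which reduces to the canonical bundle formula for the (possibly non-flat) conic bundle $g$ and the effectivity of its discriminant divisor on $S$; and second, the Fano-index bound $q_S\leq 6$ for del Pezzo surfaces with Du Val singularities and Picard number $1$, which is part of the classification of such surfaces (the extremal case $q_S=6$ being realized by $\PPP(1,2,3)$). Granting both, we conclude $\omega\leq 2\cdot 6=12$.
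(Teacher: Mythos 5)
Your case division (horizontal versus vertical over $S$) is the right skeleton, and the horizontal case is correct and essentially identical to what the paper does (there one intersects with a general fiber of $g|_G$ and gets $\omega\leq 2$). But the vertical case, which is where the bound $12$ actually has to be earned, rests on inputs that you grant rather than prove, and at least one of them is precisely the delicate point. First, the inequality $(-K_Y)^2\cdot g^*(-K_S)\leq 4K_S^2$ is \emph{not} a routine consequence of "the canonical bundle formula for the conic bundle $g$": here $Y$ is only $\mathbb{Q}$-factorial terminal, $g$ need not be flat or standard, and $S$ has Du Val singularities, so the formula $g_*(K_Y^2)=-4K_S-\Delta$ with $\Delta$ effective is exactly the kind of statement that requires the Mori--Prokhorov theory of $\mathbb{Q}$-conic bundles (or an independent argument). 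The paper sidesteps this entirely: it restricts everything to a smooth surface $G=g^*H$ for $H$ general in a carefully chosen base-point-free system $\mathcal{H}$ on $S$ (chosen case by case using the trichotomy $S\simeq\PPP^2$, $S\simeq\PPP(1,1,2)$, or $K_S^2\leq 6$), and then uses only the elementary ruled-surface bound $K_G^2\leq 8-8g(H)$ to get $(-K_Y|_G)^2\leq -4(K_S\cdot H)\leq 24$; the vertical case is then finished by intersecting with the nef divisor $-K_Y|_G$ on $G$, with no index of $S$ and no discriminant divisor ever appearing. Your second granted input, $q_S\leq 6$ for Du Val del Pezzo surfaces of Picard number one, is a genuine classification fact that would need a precise reference; the paper's route does not need it.

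There is also a gap earlier in your vertical case: from "$D$ is vertical and $\rho(Y/S)=1$" you conclude $D\sim_{\bQ}g^*C$ with $C$ an \emph{integral} Weil divisor, but a vertical prime divisor $P$ with image $C_0$ only satisfies $P\equiv\lambda\, g^*C_0$ for some rational $0<\lambda\leq 1$ (e.g.\ $\lambda=1/m$ if $P$ appears with multiplicity $m$ in $g^*C_0$), and your final estimate $C\cdot(-K_S)\geq K_S^2/q_S$ collapses if $\lambda<1$. This can be repaired using the movability of $D$ (its components sweep out $S$-curves in moving families, so over the generic point of each image curve $g$ is a smooth conic bundle and the multiplicity is one), but that argument has to be made; note the paper instead uses movability of $D$ only to guarantee that $D|_G$ is a nonzero effective integral divisor on $G$. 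In summary: correct strategy at the top level, but the two "hard parts" you defer are not minor, and one of them is the actual mathematical content of the lemma.
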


\begin{proof}
As $S$ is a del Pezzo surface with at worst Du Val singularities and $\rho(S)=1$, there are 3 cases (see \cite{MZ88}, \cite[Remark~3.4(ii)]{Prok07}):
\begin{enumerate}[(1)]
 \item $K_{S}^2=9$ and $S\simeq \mathbb{P}^2$;
 \item $K_{S}^2=8$ and $S\simeq \mathbb{P}(1, 1, 2)$;
 \item $1\leq K_{S}^2\leq 6$.
\end{enumerate}
Consider the linear system $\mathcal{H}$ on $S$ defined by
$$\mathcal{H}:=
\begin{cases}
|\mathcal{O}_{\mathbb{P}^2}(1)| & \text{if } S\simeq \mathbb{P}^2;\\
|\mathcal{O}_{\mathbb{P}(1, 1, 2)}(2)| & \text{if } S\simeq \mathbb{P}(1, 1, 2);\\
|-K_{S}|& \text{if } 2\leq K_{S}^2\leq 6;\\
|-2K_{S}|& \text{if } K_{S}^2=1.
\end{cases}
$$
Then $\mathcal{H}$ is base point free and defines a generically finite map (cf. \cite[Theorem~8.3.2]{Dol12}).
By Bertini's theorem, we can take a general element $H\in \mathcal{H}$ such that $H$ and $G=g^{-1}(H)=g^*H$ are smooth.
Note that for a general fiber $C$ of $g|_G$, $C\simeq \mathbb{P}^1$, $(-K_G\cdot C)=2$, and $G|_G\sim (H^2)\cdot C$.

Note that $g|_G$ is factored through by a ruled surface over $H$, so $K_G^2\leq 8-8g(H)$. Then
\begin{align*}
(-K_Y|_G)^2={}&(-K_{G}+G|_G)^2\\={}&K_{G}^2+4H^2\\\leq{}& 8-8g(H)+4H^2\\={}&-4(K_{S}\cdot H)\leq24.
\end{align*}
By construction, as $|G|$ defines a morphism from $Y$ to a surface and $D$ is movable, $D|_G$ is an effective non-zero integral divisor for a general $G$. So we may write
\begin{align}\label{relation on G}
-K_Y|_G\sim_{\bQ}\omega D|_G +E|_G.
\end{align}
Take a general fiber $C$ of $g|_G$. If $(D|_G \cdot C)\neq0$, then by \eqref{relation on G} intersecting with $C$, $\omega\leq2$. 
If $(D|_G \cdot C)=0$, then $D|_G $ is vertical over $H$ and thus $D|_G$ is numerically equivalent to a multiple of $C$. By Proposition~\ref{prop MFS}(3), $-K_Y|_G$ is nef. Then by \eqref{relation on G} intersecting with $-K_Y|_G$, 
$$
24\geq (-K_Y|_G)^2\geq\omega (-K_Y|_G\cdot D|_G)\geq \omega (-K_Y|_G\cdot C)=2\omega, 
$$
which implies that $\omega\leq12$. 
\end{proof}

\subsection{A new geometric inequality}
Now we are prepared to prove Theorem~\ref{npthm}.
\begin{proof}[Proof of Theorem~\ref{npthm}]
It suffices to show that, if $|-mK_X|$ is composed with a pencil, then 
$
P_{-m}\leq12m+1.
$
 
Take $g: Y\to S$ to be the morphism in Proposition~\ref{prop MFS}.
Take a common resolution $\pi:W\to X$, $q:W\to Y$. We may modify $\pi$ such that $f: W\to \bP^1$ is the fibration induced by $|-mK_X|$ as in Subsection~\ref{r-m}. See the following diagram:
$$\xymatrix{
 & W\ar[dl]_\pi \ar[dr]^q \ar[r]^f& \bP^1 \\
X \ar@{-->}[rr] & & Y \ar[r]^g & S.}
$$
Denote by $F_W$ a general fiber of $f$. Then 
\begin{align}\label{3.1}
\pi^*(-mK_X)\sim (P_{-m}-1)F_W+E, 
\end{align}
where $E$ is an effective $\bQ$-divisor on $W$. Set $\omega=\frac{P_{-m}-1}{m}$. Pushing forward \eqref{3.1} to $Y$, we have 
\begin{align}\label{relation on Y}
-K_Y\sim_{\bQ}\omega q_{*}F_W+E_Y, 
\end{align}
where $E_Y$ is an effective $\bQ$-divisor on $Y$. Note that $q_{*}F_W$ is a general member of a movable linear system.


\medskip

{\bf Case 1.} $S$ is a point.

In this case, $\omega \leq 7$ by \eqref{relation on Y} and Lemma~\ref{lem picard 1}.
 
\medskip

{\bf Case 2.} $S=\bP^1$.

 If $S=\bP^1$ and $q_{*}F_W|_F=0$, then $q_{*}F_W \sim F$ and $-K_Y\sim_{\bQ}\omega F+E_Y$. By Lemma~\ref{non-klt}, $\omega\leq12$.

 If $S=\bP^1$ and $q_{*}F_W|_F\neq0$, then $q_{*}F_W|_F$ is a movable effective non-zero integral divisor on $F$. Restricting \eqref{relation on Y} on $F$, we have 
 $
-K_F\sim_{\bQ}\omega (q_{*}F_W|_F)+E_Y|_F. 
$
 By Lemma~\ref{lct}, $\omega\leq4$.

\medskip

{\bf Case 3.} $S$ is a del Pezzo surface.

In this case, $\omega \leq 12$ by \eqref{relation on Y} and Lemma~\ref{non-klt2}.

\medskip

Combining all above cases, we proved that $\frac{P_{-m}-1}{m}=\omega\leq 12$ as long as $|-mK_X|$ is composed with a pencil.
\end{proof}

Applying Proposition~\ref{np1} and Theorem~\ref{npthm}, we have the following criteria for $|-mK|$ not composed with a pencil (cf. \cite[Proposition~5.4]{CJ20}). 
\begin{prop}\label{np2}
Let $X$ be a terminal weak $\bQ$-Fano $3$-fold. Let $t$ be a positive real number and $m$ a positive integer. If $m\geq t, m\geq\frac{r_{\max}t}{3}$, and one of the following conditions holds:
\begin{enumerate}[(1)]
 \item $m>-\frac{3}{4}+\sqrt{\frac{12}{t\cdot(-K_{X}^3)}+6r_X+\frac{1}{16}}$;

 \item $m>-\frac{3}{4}+\sqrt{\frac{12}{t\cdot(-K_{X}^3)}+\frac{72}{-K_{X}^3}+\frac{1}{16}}$, 
\end{enumerate}
then $|-m K_{X}|$ is not composed with a pencil.
\end{prop}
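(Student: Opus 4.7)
The approach is to invoke one of the two existing non-pencil criteria — Proposition~\ref{np1} under hypothesis (1), Theorem~\ref{npthm} under hypothesis (2) — by producing a lower bound on $P_{-m}$ from Reid's formula \eqref{RR for P}. First I would square each hypothesis to rewrite condition~(1) as $(2m+1)(m+1) > \frac{24}{t(-K_X^3)} + 12 r_X$ and condition~(2) as $(2m+1)(m+1) > \frac{24}{t(-K_X^3)} + \frac{144}{-K_X^3}$. Multiplying these through by $m(-K_X^3)/12$ and substituting into $P_{-m} = \frac{m(m+1)(2m+1)(-K_X^3)}{12} + (2m+1) - l(m+1)$ reveals that both reductions share the same target: it suffices to prove
\[
l(m+1) \leq 2m + \frac{2m}{t},
\]
which under hypothesis (1) yields $P_{-m} > r_X(-K_X^3)m + 1$ and under hypothesis (2) yields $P_{-m} > 12m + 1$, thereby activating the appropriate criterion.

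The bulk of the work is then the estimate on $l(m+1)$. I would decompose $l(m+1) = \sum_i l_{Q_i}(m+1)$ over Reid's basket, write $m = q_i r_i + u_i$ with $0 \leq u_i < r_i$, and invoke periodicity $l_{Q_i}(m+1) = q_i(r_i^2-1)/12 + l_{Q_i}(u_i+1)$. The full-period part aggregates to at most $\frac{m}{12}\sum_i(r_i - 1/r_i)$, and the fundamental inequality $\gamma(B_X) \geq 0$, i.e.\ $\sum_i(r_i - 1/r_i) \leq 24$, bounds this by $2m$.

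The main obstacle is controlling the partial-period tail $\sum_i l_{Q_i}(u_i+1)$ by $2m/t$. A naive estimate $l_{Q_i}(u_i+1) \leq (r_i^2-1)/12$ combined with $r_{\max} \leq 3m/t$ from the second hypothesis gives only $6m/t$, losing a factor of three. Closing this gap should require a per-point bound on $l_{Q_i}(u_i+1)$ that is genuinely linear in $u_i$ — for instance exploiting $\overline{jb_i} \leq jb_i$ when $jb_i < r_i$ so that $l_{Q_i}(u_i+1) \lesssim u_i b_i/2$, and trading $r_i$-sums for $b_i$-sums via Lemma~\ref{3/2bi} — while simultaneously using $m \geq t$ to absorb constant-order residual terms. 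Once the sharp form $l(m+1) \leq 2m + 2m/t$ is established, applying Proposition~\ref{np1} in case (1) and Theorem~\ref{npthm} in case (2) completes the proof.
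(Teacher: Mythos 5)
Your reduction is exactly the paper's: squaring condition (1) or (2) and substituting into Reid's formula \eqref{RR for P} shows that everything rests on the single inequality $l(m+1)\leq 2m+\frac{2m}{t}$ under the standing hypotheses $m\geq t$ and $m\geq\frac{r_{\max}t}{3}$, after which condition (1) forces $P_{-m}>r_X(-K_X^3)m+1$ (so Proposition~\ref{np1} applies) and condition (2) forces $P_{-m}>12m+1$ (so Theorem~\ref{npthm} applies). The paper, however, does not prove this inequality: its entire proof of Proposition~\ref{np2} is the citation of \cite[Proposition~5.3]{CJ20} for the bound $P_{-m}\geq\frac{1}{12}m(m+1)(2m+1)(-K_X^3)+1-\frac{2m}{t}$, followed by the two applications you describe. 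So the portion of your argument that the paper actually carries out is complete and correct.

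The portion you add --- a from-scratch proof of $l(m+1)\leq 2m+\frac{2m}{t}$ --- has a genuine gap, which you acknowledge but do not close. The periodicity decomposition and the bound of the full-period part by $\frac{m}{12}\sum_i(r_i-\frac{1}{r_i})\leq 2m$ via \eqref{gamma>0} are correct, as is your observation that the naive tail estimate only yields $\frac{6m}{t}$. But the patch you sketch fails: the estimate $l_{Q_i}(u_i+1)\lesssim\frac{u_ib_i}{2}$ is false in general. For the point $(1,r)$ with $u=r-1$ one has $l_Q(u+1)=\frac{r^2-1}{12}$, quadratic in $r$, whereas $\frac{ub}{2}=\frac{r-1}{2}$ is linear; and the inequality $\overline{jb}\leq jb$ only gives $l_Q(u+1)\leq\frac{b}{2}\sum_{j=1}^{u}j$, which is quadratic in $u$, not linear. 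Note also that by bounding $q_i$ above by $\frac{m}{r_i}$ you discard the negative term $-\frac{u_i(r_i^2-1)}{12r_i}$ coming from $q_i=\frac{m-u_i}{r_i}$; any successful argument along these lines must retain it and control the discrepancy $\sum_i\big(l_{Q_i}(u_i+1)-\frac{u_i(r_i^2-1)}{12r_i}\big)$ rather than the tail alone. As written the central estimate is not established; the clean fix is simply to cite \cite[Proposition~5.3]{CJ20}, as the paper does.
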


\begin{proof}
By \cite[Proposition~5.3]{CJ20}, 
$$
P_{-m} \geq \frac{1}{12} m(m+1)(2m+1)(-K_{X}^3)+1-\frac{2m}{t}.
$$
The assumption implies that either
$P_{-m}>r_X(-K_X^3) m+1$ or $P_{-m}>12m+1$.
Hence $|-m K_{X}|$ is not composed with a pencil by Proposition~\ref{np1} and Theorem~\ref{npthm}.
\end{proof}

By the same method we have the following corollary.
\begin{cor}\label{np2 cor}
Let $X$ be a terminal weak $\bQ$-Fano $3$-fold. Let $t$ be a positive real number and $m$ a positive integer. If $m\geq t, m\geq\frac{r_{\max}t}{3}$, and $m>-\frac{3}{4}+\sqrt{\frac{12}{t\cdot(-K_{X}^3)}+\frac{6}{l\cdot (-K_{X}^3)}+\frac{1}{16}}$ for some positive real number $l$,
then $P_{-m}-1>\frac{m}{l}$. 
\end{cor}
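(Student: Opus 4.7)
The plan is to follow the template of the proof of Proposition~\ref{np2} and replace the target inequality $P_{-m}>r_X(-K_X^3)m+1$ (which there forces non-pencil via Proposition~\ref{np1}) with the simpler target $P_{-m}-1>\frac{m}{l}$.

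First I would invoke \cite[Proposition~5.3]{CJ20}, whose hypotheses are precisely $m\geq t$ and $m\geq\frac{r_{\max}t}{3}$, to produce the cubic lower bound
$$P_{-m}\geq \frac{1}{12}m(m+1)(2m+1)(-K_X^3)+1-\frac{2m}{t}.$$
Dividing the desired inequality $P_{-m}-1>\frac{m}{l}$ by $m>0$ and using this lower bound, the conclusion will follow once I verify the polynomial inequality
$$(m+1)(2m+1)>\frac{24}{t\cdot(-K_X^3)}+\frac{12}{l\cdot(-K_X^3)}.$$
Expanding the left side as $2m^2+3m+1$, this is equivalent to
$$\left(m+\tfrac{3}{4}\right)^2>\frac{12}{t\cdot(-K_X^3)}+\frac{6}{l\cdot(-K_X^3)}+\frac{1}{16},$$
which is exactly the numerical hypothesis on $m$ (after squaring both sides of the assumed strict inequality, which is legitimate since the right side $-\frac{3}{4}+\sqrt{\cdots}$ is nonnegative and $m$ is a positive integer, so $m+\frac{3}{4}>0$). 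Chaining these equivalences back up gives the conclusion.

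There is essentially no obstacle: the statement is engineered so that the square-root hypothesis is the exact algebraic rewriting of the inequality needed to push the Chen--Chen style cubic lower bound on $P_{-m}$ past $1+\frac{m}{l}$. The only care needed is to note that squaring is valid and that no further geometric input beyond \cite[Proposition~5.3]{CJ20} (used exactly as in Proposition~\ref{np2}) is required. The proof will therefore be two or three lines of computation.
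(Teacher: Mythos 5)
Your proof is correct and is exactly the argument the paper intends: the paper gives no separate proof of this corollary, saying only ``by the same method,'' i.e.\ apply the lower bound $P_{-m}\geq \frac{1}{12}m(m+1)(2m+1)(-K_X^3)+1-\frac{2m}{t}$ from \cite[Proposition~5.3]{CJ20} and check that the square-root hypothesis is the completed-square form of $(m+1)(2m+1)>\frac{24}{t\cdot(-K_X^3)}+\frac{12}{l\cdot(-K_X^3)}$. Your algebra and the justification for squaring both check out.
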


We illustrate in the following example on how efficient Proposition~\ref{np2} is comparing to \cite[Corollary~4.2]{CJ16}.

\begin{example}\label{43}
 Suppose that $X$ is a terminal weak $\mathbb{Q}$-Fano $3$-fold with $P_{-1}=0$, $B_X=\{2\times(1, 2), (2, 5), (3, 7), (4, 9)\}$, and $-K_X^3=\frac{43}{315}$.
Then \cite[Corollary~4.2]{CJ16} implies that $|-mK_X|$ is not composed with a pencil for all $m\geq 61$ (see the last paragraph of \cite[Page 106]{CJ16}). On the other hand, by Proposition~\ref{np2}, 
$|-mK_X|$ is not composed with a pencil for all $m\geq 23$ (see Case~4 of Proof of Theorem~\ref{case3}), which significantly improves the previous result. Also it can be computed directly by \eqref{RR for P} to get $P_{-22}=260<12\times 22+1$, which tells that the estimates in Proposition~\ref{np2} is efficient enough comparing to directly using the Riemann--Roch formula.
\end{example}

\subsection{A remark on {\cite[Corollary~4.2]{CJ16}}}

In this subsection, we discuss the equality case of {\cite[Corollary~4.2]{CJ16}} for terminal $\bQ$-Fano $3$-folds.
\begin{prop}\label{np1 equality}
Let $X$ be a terminal $\bQ$-Fano $3$-fold and let $m$ be a positive integer. If
$$
P_{-m}=r_X(-K_X^3) m+1
$$
and $|-m K_X|$ is composed with a pencil, then
\begin{enumerate}[(1)]
 \item $r_X(-K_X^3)=1$;
 \item if moreover the Weil divisor class group of $X$ has no $m$-torsion element, then $h^0(X, -kK_X)$
 $=k+1$ for all $1\leq k\leq m$.
\end{enumerate}
\end{prop}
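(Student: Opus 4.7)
The plan is to trace through the proof of Proposition~\ref{np1} and exploit its tightness. After choosing a resolution $\pi:W\to X$ adapted to $|-mK_X|$, the pencil hypothesis yields $\pi^*(-mK_X)\sim (P_{-m}-1)F_W+E$ where $F_W$ is a general fiber of the Stein factorization $f:W\to\bP^1$ and $E\geq 0$ is the fixed part. The proof of Proposition~\ref{np1} boils down to the chain
\[
m(-K_X)^3=(P_{-m}-1)\bigl(F_W\cdot\pi^*(-K_X)^2\bigr)+\bigl(E\cdot\pi^*(-K_X)^2\bigr)\geq \frac{P_{-m}-1}{r_X},
\]
combining $E\cdot\pi^*(-K_X)^2\geq 0$ with the integrality estimate $r_X\bigl(F_W\cdot\pi^*(-K_X)^2\bigr)\in\ZZ_{>0}$. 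The equality hypothesis $P_{-m}-1=r_Xm(-K_X^3)$ forces both steps to be equalities: $F_W\cdot\pi^*(-K_X)^2=1/r_X$ and $\pi_*E\cdot(-K_X)^2=0$. Since $-K_X$ is ample on the $\bQ$-Fano $X$, the latter forces $\pi_*E=0$, and pushing forward produces the crucial linear equivalence of Weil divisors
\[
-mK_X\sim (P_{-m}-1)D,\qquad D:=\pi_*F_W,
\]
where $D$ is effective and ample (since $-K_X$ is), moves in at least a pencil so $h^0(X,D)\geq 2$, and satisfies $-K_X\sim_{\bQ}\omega D$ with $\omega:=(P_{-m}-1)/m=r_X(-K_X^3)$.

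For (1), the goal is to conclude $\omega=1$. I would first reduce to the primitive case $h^0(X,D)=2$: if $|D|$ itself were composed with a finer sub-pencil so that $D\sim tD'$ for a primitive class $D'$ and integer $t\geq 2$, we could replace $F_W$ with the corresponding primitive fiber on $W$ and repeat the analysis. Once $D$ is the primitive pencil class, the tight equality $r_X\,D\cdot(-K_X)^2=1$ pins down the Cartier index of $D$ very restrictively, and combined with $-K_X\sim_{\bQ}\omega D$ forces the integrality $r_X\omega\in\ZZ$. On the other hand, applying Proposition~\ref{prop MFS} to pass to a birational model $g:Y\to S$ and invoking Lemma~\ref{lem picard 1}, Lemma~\ref{non-klt}, or Lemma~\ref{non-klt2} according to $\dim S$, we obtain $\omega\leq 12$ (with the sharper bound $7$ when $S$ is a point). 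Combining these integrality and boundedness constraints with the Reid--Riemann--Roch relations between $\omega$, $r_X$, and $(-K_X^3)$ should then pin down $\omega=1$.

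For (2), given (1), we have $-mK_X\sim mD$, so $m([-K_X]-[D])=0$ in $\mathrm{Cl}(X)$; the absence of $m$-torsion then forces $-K_X\sim D$, whence $-kK_X\sim kD$ for every $k\geq 1$. The pencil in $|D|$ immediately gives $h^0(X,-kK_X)=h^0(X,kD)\geq k+1$ for $1\leq k\leq m$. For the reverse inequality, fix sections $\xi_0,\xi_1\in H^0(X,D)$ defining the pencil; by the pencil structure of $|-mK_X|$, the $m+1$ monomials $\xi_0^{m-i}\xi_1^i$ span $H^0(X,-mK_X)$. If some $\eta\in H^0(X,kD)$ lay outside $\mathrm{span}\{\xi_0^{k-j}\xi_1^j\}$, a valuation argument along a general member of the pencil (using that multiplication by $\xi_0^{m-k}$ is injective and that the products $\xi_0^{k-i}\xi_1^i$ with $i>k$ carry strictly negative valuation along $\mathrm{div}(\xi_0)$) would force $\xi_0^{m-k}\eta\in H^0(X,mD)$ outside the pencil's monomial span, contradicting $P_{-m}=m+1$. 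Hence $h^0(X,-kK_X)=k+1$ for every $1\leq k\leq m$.

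The main obstacle is Part~(1): narrowing $\omega$ from the classification bound of $12$ down to exactly $1$. This requires carefully exploiting the tight integrality $F_W\cdot\pi^*(-K_X)^2=1/r_X$ (which severely constrains the Cartier index of the primitive class $D$) together with the Fano-index bounds from the Mori-Fano structure on the birational model $Y$ and the Reid--Riemann--Roch numerical relations between $\omega$, $r_X$, and $(-K_X^3)$.
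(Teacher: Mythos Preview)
Your setup and the deduction of $-mK_X\sim(P_{-m}-1)D$ with $D=\pi_*F_W$ are correct and match the paper exactly. Your argument for Part~(2) is also essentially sound (though more elaborate than the paper's, which simply notes that $-kK_X\sim k\pi_*S$ is composed with a pencil and cites \cite[Page~63, Case~($f_p$)]{CJ16} for the dimension count).

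The genuine gap is in Part~(1). Bounding $\omega\leq 12$ via the structure theorem and then appealing vaguely to ``integrality and boundedness constraints with the Reid--Riemann--Roch relations'' does not pin down $\omega=1$; you yourself flag this as the main obstacle, and indeed there is no mechanism in your outline to rule out, say, $\omega=2$ or $\omega=7$. The paper's argument is entirely different and much cleaner. The key input you are missing is a lower bound on the self-intersection of $D=\pi_*S$: by \cite[Lemma~2.3]{Jiang16} one has $D^3\geq 1/r_X$. Combining this with $-K_X\sim_{\bQ}\omega D$ gives
\[
-K_X^3=\omega^3 D^3\geq \frac{\omega^3}{r_X},
\]
i.e.\ $\omega=r_X(-K_X^3)\geq\omega^3$. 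Since $\omega$ is a positive integer, this forces $\omega=1$ immediately. No structure theorem, no case analysis on $\dim S$, no Riemann--Roch is needed.
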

\begin{proof}
We recall the proof of \cite[Corollary~4.2]{CJ16}. 
As $|-m K_X|$ is composed with a pencil, take $D=-mK_X$ and keep the notation in Subsection~\ref{r-m}, 
we have 
\begin{align}
 \pi^*(-mK_X)\sim (P_{-m}-1)S+F\label{-K=S}
\end{align}
where $S$ is a generic irreducible element of $\Mov|\lfloor \pi^*(-mK_X) \rfloor|$ and $F$ is an effective $\mathbb{Q}$-divisor.
Then 
$$
m(-K_X^3)\geq (P_{-m}-1)(\pi^*(-K_X)^2\cdot S)\geq \frac{1}{r_X}(P_{-m}-1)
$$
by \cite[Lemma~4.1]{CJ16}.

Now by assumption, the equality holds. So 
$(\pi^*(-K_X)^2\cdot F)=0$. This implies that $F$ is $\pi$-exceptional as $-K_X$ is ample. So \eqref{-K=S} implies that 
$$
-mK_X\sim (P_{-m}-1)\pi_*S.
$$
Then 
\begin{align}
 -K_X\sim_\bQ r_X(-K_X^3)\pi_*S.\label{-K=nS} 
\end{align}
By \cite[Lemma~2.3]{Jiang16},
$(\pi_*S)^3\geq \frac{1}{r_X}$.
Then
\eqref{-K=nS} implies that
$$
-K_X^3\geq \frac{(r_X(-K_X^3))^3}{r_X},
$$
which implies that $r_X(-K_X^3)=1$ as it is a positive integer. Under the assumption that the Weil divisor class group of $X$ has no $m$-torsion element, \eqref{-K=nS} implies that $-K_X\sim \pi_*S$. So the conclusion follows as $-kK_X\sim k\pi_*S$ is composed with a pencil for any $1\leq k\leq m$ (see \cite[Page~63, Case ($f_p$)]{CJ16}).
\end{proof}

The following example shows that Proposition~\ref{np1 equality} is non-empty. 
\begin{example}[{\cite[List~16.6, No.~88]{IF00}}]
A general weighted hypersurface $X_{42}\subset\mathbb{P}(1, 1,$ $6, 14, 21)$ is a terminal $\mathbb{Q}$-Fano $3$-fold with $r_X(-K_X^3)=1$ and $B_X=\{(1, 2), (1, 3), (1, 7)\}$. By \cite[Proposition~2.9]{Prok10}, the Weil divisor class group of $X$ is torsion free. Certainly, $P_{-k}=k+1$ and $|-kK_X|$ is composed with a pencil for $1\leq k\leq5$.
\end{example}

\section{Proof of main results}\label{sec 5}
In this section, we apply the birationality criteria (Theorem~\ref{thm bc} and Corollary~\ref{usage of thm bc 2}) and the non-pencil criteria (Proposition~\ref{np2}) to prove the main theorem. The proof will be divided into several cases:
\begin{enumerate}[1.]
 \item $r_X=840$;
 \item $P_{-2}=0$;
 \item $P_{-2}>0$, $P_{-1}=0$, and $r_{\max}\geq 14$;
 \item $P_{-2}>0$, $P_{-1}=0$, and $r_{\max}\leq 13$;
 \item $P_{-1}>0$ and $r_{\max}\geq 14$;
 \item $P_{-1}>0$ and $r_{\max}\leq 13$.
\end{enumerate}
Here recall that $r_X=\lcm\{r_i\mid r_i\in \mathcal{R}_X\}$ is the Cartier index of $K_X$, and $r_{\max}=\max\{r_i\mid r_i\in \mathcal{R}_X\}$ is the maximal local index.
\subsection{The case $r_{X}=840$}
\begin{thm}\label{case0}
Let $X$ be a terminal weak $\bQ$-Fano $3$-fold with $r_{X}=840$. Then $\varphi_{-m}$ is birational for all $m\geq48$.
\end{thm}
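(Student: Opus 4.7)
The plan is to exploit the extremal nature of $r_X=840$. Since $840=2^3\cdot 3\cdot 5\cdot 7$, the multiset $\mathcal R_X$ must contain an element from each of $8\ZZ,\,3\ZZ,\,5\ZZ,\,7\ZZ$; replacing any of the minimal choices $8,3,5,7$ by a proper multiple, or inserting any further index other than a single copy of $2$, forces $\sum_i r_i-\sum_i 1/r_i>24$ and contradicts $\gamma(B_X)\geq 0$. Hence $\mathcal R_X\in\{\{3,5,7,8\},\{2,3,5,7,8\}\}$, in particular $r_{\max}=8$, and the residues $b_i\in\{1,2\}\times\{1,2,3\}\times\{1,3\}$ only give a short finite refinement. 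In the first case $P_{-1}\geq 1$ by Proposition~\ref{known}(4); in the second a direct check via \eqref{volume} shows $\sigma(B_X)-\sigma'(B_X)<6$ for every admissible residue pattern, so $-K_X^3>0$ again forces $P_{-1}\geq 1$. I therefore take $\nu_0=1$.

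Next I fix $m_0=8$ via $P_{-8}\geq 2$ from Proposition~\ref{known}(2), giving $a(m_0)=6$ and $\mu_0'\leq 8$. For each admissible numerical type $(B_X,P_{-1})$ I evaluate $P_{-m}$ from Reid's formula \eqref{RR for P} and apply Theorem~\ref{npthm} in its effective form Proposition~\ref{np2} with $t=\nu_0=1$ to produce some $m_1$ with $P_{-m_1}>12m_1+1$; then $|-m_1K_X|$ is not composed with any pencil, so in particular not with the same pencil as $|-8K_X|$. Lemma~\ref{N_0} then yields $N_0\geq\lceil r_X/(m_1\nu_0 r_{\max})\rceil=\lceil 105/m_1\rceil$.

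I then invoke the birationality criteria. When $m_1$ is small enough that $N_0\geq 7$, Corollary~\ref{usage of thm bc 2}(2) with $\nu_0 r_{\max}=8$ and $\mu_0'=8$ satisfies the side hypothesis $\nu_0 r_{\max}\geq\sqrt{r_X/(2N_0)}$ and gives $m\geq\lfloor 24+105/N_0\rfloor\leq 39$, well within the claimed bound. In the residual subcases where $m_1$ is larger and $N_0$ correspondingly smaller, I fall back to Theorem~\ref{thm bc}(3), whose bound is $\lfloor\mu_0'\rfloor+m_1+2\nu_0 r_{\max}=m_1+24$, at most $48$ as long as $m_1\leq 24$, which the non-pencil step delivers uniformly.

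The main obstacle will be the extremal numerical type with $-K_X^3$ smallest, a representative worst case being the basket $\{(1,3),(2,5),(2,7),(1,8)\}$ with $P_{-1}=1$, for which $-K_X^3=143/840$: there the $m_1$ produced by Proposition~\ref{np2} is largest and this subcase essentially dictates the final constant $48$. Its treatment reduces to a careful evaluation of $P_{-m}$ via \eqref{RR for P} confirming $m_1\leq 24$, after which Theorem~\ref{thm bc}(3) gives $\varphi_{-m}$ birational for $m\geq 48$.
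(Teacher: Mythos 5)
Your reduction of the combinatorics is sound: $\gamma(B_X)\geq 0$ does force $\mathcal R_X=\{3,5,7,8\}$ or $\{2,3,5,7,8\}$, hence $r_{\max}=8$, and your two arguments for $P_{-1}\geq 1$ both work (this is exactly what the paper imports from \cite[Lemma~6.5]{CJ20}). But the quantitative core of your proof fails. First, your ``worst case'' is misidentified: the minimum of $-K_X^3$ here is $\frac{47}{840}$ (attained e.g.\ by $\{(1,3),(1,5),(3,7),(1,8)\}$ with $P_{-1}=1$), not $\frac{143}{840}$. For that volume the non-pencil step cannot deliver $m_1\leq 24$: in Proposition~\ref{np2}(2) the term $\frac{72}{-K_X^3}=\frac{60480}{47}\approx 1287$ alone forces $m_1\geq 36$ no matter how $t$ is chosen (and condition (1) is worse, since $6r_X=5040$); even computing directly, $P_{-24}<190<12\cdot 24+1=289$. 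So the claim that ``the non-pencil step delivers $m_1\leq 24$ uniformly'' is false. With $m_1=36$ one gets only $N_0\geq\lceil 105/36\rceil=3$, so your $N_0\geq 7$ branch is vacuous (it would need $m_1\leq 17$), Corollary~\ref{usage of thm bc 2}(2) is inapplicable ($8<\sqrt{840/6}$), Theorem~\ref{thm bc}(3) gives $36+24=60$, and Corollary~\ref{usage of thm bc 2}(1) with $\mu_0'=8$ gives $\lfloor 8+\sqrt{2240}\rfloor=55$. None of these reaches $48$.

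The missing idea is that you must lower $\mu_0'$, not $m_1$. The paper's proof splits on whether $|-12K_X|$ and $|-8K_X|$ are composed with the same pencil. If not, take $m_1=12$, $\mu_0'=8$ and Theorem~\ref{thm bc}(3) already gives $36$. If they are, then Corollary~\ref{np2 cor} (with $l=1$, $t=4.5$) gives $P_{-12}-1>12$, so Remark~\ref{rem mu0} allows $\mu_0'=\frac{12}{P_{-12}-1}<1$; combined with $m_1=36$ from Proposition~\ref{np2}(2) and $N_0\geq 3$, Corollary~\ref{usage of thm bc 2}(1) yields $\lfloor\mu_0'+\sqrt{8\cdot 840/3}\rfloor\leq 48$. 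It is this dichotomy, absent from your proposal, that produces the constant $48$.
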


\begin{proof}
Keep the setting in Notation~\ref{set up}. 
By \cite[Lemma~6.5]{CJ20} and the first line of its proof, we know that $r_{\max}=8$, $P_{-1}\geq1$, and $-K_X^3\geq\frac{47}{840}$. Take $m_0=8$ and $\nu_0=1$.
By Corollary~\ref{np2 cor} (with $l=1$, $t=4.5$, $-K_X^3\geq\frac{47}{840}$), we have $P_{-12}-1>12.$

If $|-12K_X|$ and $|-8K_X|$ are composed with the same pencil, then take $\mu_0'=\frac{12}{P_{-12}-1}<1$ by Remark~\ref{rem mu0}. By Proposition~\ref{np2}(2) (with $t=13.5$, $-K_X^3\geq\frac{47}{840}$), we can take $m_1=36$. 
By Lemma~\ref{N_0}, 
$N_0\geq \roundup{\frac{840}{8m_1}}= 3.$
Then by Corollary~\ref{usage of thm bc 2}(1), $\varphi_{-m}$ is birational for all $m\geq48$.

If $|-12K_X|$ and $|-8K_X|$ are not composed with the same pencil, then take $m_1=12$ and $\mu_0'=m_0=8$. Then by Theorem~\ref{thm bc}(3), $\varphi_{-m}$ is birational for all $m\geq36$.
\end{proof}

\subsection{The case $P_{-2}=0$}

\begin{thm}\label{case1}
Let $X$ be a terminal weak $\bQ$-Fano $3$-fold. If $P_{-2}=0$, then $\varphi_{-m}$ is birational for all $m\geq51$.
\end{thm}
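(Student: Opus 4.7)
The plan is as follows. Since $P_{-2}=0$, the squaring map $H^{0}(X,-K_X)\to H^{0}(X,-2K_X)$ forces $P_{-1}=0$ as well. Combining this with Proposition~\ref{known} we obtain $P_{-8}\geq 2$, $-K_X^{3}\geq \tfrac{1}{330}$, $2\in\mathcal{R}_X$, and---since $r_X=840$ would force $P_{-1}\geq 1$ by \cite[Lemma~6.5]{CJ20} (as used in the proof of Theorem~\ref{case0})---$r_X\leq 660$. Equations \eqref{sigma} and \eqref{volume} further give $\sigma(B_X)=10$ and $-K_X^{3}=4-\sigma'(B_X)$, so in particular $0<-K_X^{3}<4$.

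Following Subsection~\ref{set up}, I will take $m_0=8$ (so $a(m_0)=6$) and let $\nu_0\leq 8$ be the smallest positive integer with $P_{-\nu_0}>0$. With the default choice $\mu'_0=m_0=8$, a back-computation in Theorem~\ref{thm bc}(1) shows that exhibiting any $m_1\leq 9$ with $P_{-m_1}\geq 2$ for which $|-m_1K_X|$ and $|-8K_X|$ are not composed with the same pencil already yields birationality for all $m\geq 51$. If instead $|-m_1K_X|$ and $|-8K_X|$ \emph{are} composed with the same pencil, Remark~\ref{rem mu0} lets us replace $\mu'_0$ by $\tfrac{m_1}{P_{-m_1}-1}$ (possibly $<1$), giving enough room to invoke Theorem~\ref{thm bc}(2),(3) or Corollary~\ref{usage of thm bc 2} for somewhat larger $m_1$. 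To locate such an $m_1$, I will apply the non-pencil algorithm Proposition~\ref{np2}, which packages both the classical inequality $P_{-m}>r_X(-K_X^{3})m+1$ and the new inequality $P_{-m}>12m+1$ from Theorem~\ref{npthm}.

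The universal bound $-K_X^{3}\geq \tfrac{1}{330}$ is far too weak to be fed directly into Proposition~\ref{np2}, so I will perform a case analysis on the initial basket $B_X^{(0)}$. The constraints $\sigma(B_X)=10$, $\gamma(B_X)\geq 0$, and $-K_X^{3}=4-\sigma'(B_X)>0$, together with the Chen--Chen formulas \eqref{n12}--\eqref{n14} and the non-negativity $n^{0}_{1,r}\geq 0$, leave only finitely many admissible shapes for $B_X^{(0)}$. In each shape one extracts sharp estimates of $-K_X^{3}$, $r_X$, $r_{\max}$, and of the smallest $\nu_0$ with $P_{-\nu_0}>0$. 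Plugging these into Proposition~\ref{np2} then produces an explicit small $m_1$, after which Theorem~\ref{thm bc}(1)--(3) or Corollary~\ref{usage of thm bc 2}, combined with Lemma~\ref{N_0} to lower-bound $N_0$, delivers birationality for all $m\geq 51$.

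The principal obstacle will be the subfamily in which $-K_X^{3}$ is close to $\tfrac{1}{330}$ while $r_X$ is large, for then $r_X(-K_X^{3})m$ may exceed $P_{-m}$ and Proposition~\ref{np1} becomes vacuous. This is exactly the regime that Theorem~\ref{npthm}---whose threshold $12m+1$ is independent of $r_X$---is designed to address: once $-K_X^{3}$ is bounded away from $0$, it yields a small $m_1$ regardless of how large $r_X$ is. A secondary point to verify in each subcase is that the $m_1$ so produced is genuinely not composed with the same pencil as $|-8K_X|$, which is automatic whenever Proposition~\ref{np2} certifies that $|-m_1K_X|$ is not composed with any pencil at all.
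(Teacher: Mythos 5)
Your high-level strategy is the same as the paper's: reduce to a finite list of baskets via $\sigma(B_X)=10$, $\gamma(B_X)\ge 0$ and $-K_X^3>0$, extract $-K_X^3$, $r_X$, $r_{\max}$ and a small $\nu_0$ in each case, feed these into Proposition~\ref{np2} to get an $m_1$ not composed with a pencil, and close with Theorem~\ref{thm bc} or Corollary~\ref{usage of thm bc 2} (splitting on whether $|-m_1K_X|$ and $|-m_0K_X|$ are composed with the same pencil, and shrinking $\mu_0'$ via Remark~\ref{rem mu0} in the former case). The preliminary observations ($P_{-1}=0$, $\sigma(B_X)=10$, $-K_X^3=4-\sigma'(B_X)$, $r_X\le 660$) are all correct.

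However, there is a genuine gap: the proof stops exactly where the work begins. The entire content of the theorem is the explicit case analysis, and none of it is carried out. The paper does not re-derive the basket list but cites the classification of the $P_{-2}=0$ case in \cite[Theorem~3.5, Table~A]{CC08} (23 baskets), groups them into four families, and for each family records the \emph{case-specific} data that make the numerics close: e.g.\ $P_{-6}\ge 2$ and $-K_X^3\ge \tfrac1{30}$ for Nos.~6--13, 16--23, or $P_{-5}=2$, $-K_X^3=\tfrac{17}{210}$, $r_X=210$ for No.~14. These inputs are not recoverable from the shape of $B_X^{(0)}$ alone, and your fallback bound $-K_X^3\ge\tfrac1{330}$ is, as you note yourself, far too weak. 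Moreover, your "back-computation" that an $m_1\le 9$ would suffice via Theorem~\ref{thm bc}(1) is never realizable here: in every case of the paper's proof the non-pencil $m_1$ produced by Proposition~\ref{np2} lies between $10$ and $30$, so one is forced into Theorem~\ref{thm bc}(2) with carefully tuned $t$, $l$, $\mu_0'$, and it is precisely these choices that determine whether the resulting bound is $51$ or something worse. Asserting that "plugging these in \dots delivers birationality for all $m\ge 51$" is the claim to be proved, not a step of the proof; without exhibiting the finitely many cases and the explicit parameters in each, the argument is incomplete.
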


\begin{proof}
Keep the setting in Notation~\ref{set up}.
In this case, the possible baskets are classified in \cite[Theorem~3.5]{CC08} with 23 cases in total (see Table~\ref{tabA} in Appendix).
Here we refer to the numbering in Table~\ref{tabA}.

For No.~1-5 of Table~\ref{tabA}, $r_{X} \leq 84, -K_{X}^{3} \geq \frac{1}{84}$, $P_{-8}\geq2$, and $r_{\max } \leq 11$. So we can take $m_0=8$. By Corollary~\ref{np2 cor} (with $l=2, t=5.7$), $P_{-21}-1>\frac{21}{2}$. If $|-21K_X|$ and $|-8K_X|$ are composed with the same pencil, then take $\mu_0'=\frac{21}{P_{-21}-1}<2$ by Remark~\ref{rem mu0}. 
By Proposition~\ref{np2}(1) (with $t=6.6$), we can take $m_{1}=25$.
Then by Theorem~\ref{thm bc}(2), $\varphi_{-m}$ is birational for all $m \geq 48$.
If $|-21K_X|$ and $|-8K_X|$ are not composed with the same pencil, then take $m_1=21$ and $\mu_{0}'=m_0=8$. Then by Theorem~\ref{thm bc}(2), $\varphi_{-m}$ is birational for all $m \geq 51$.

For No.~6-13 and No.~16-23 of Table~\ref{tabA}, $r_{X} \leq 78, -K_{X}^{3} \geq \frac{1}{30}$, $P_{-6}\geq2$, and $r_{\max } \leq 14$. So we can take $m_0=6$. By Corollary~\ref{np2 cor} (with $l=1, t=3.6$), $P_{-17}-1>17$. If $|-17K_X|$ and $|-6K_X|$ are composed with the same pencil, then take
$\mu_0'=\frac{17}{P_{-17}-1}<1$ by Remark~\ref{rem mu0}.
By Proposition~\ref{np2}(1) (with $t=4.9$), we can take $m_{1}=23$.
Then by Theorem~\ref{thm bc}(2), $\varphi_{-m}$ is birational for all $m \geq 51$.
If $|-17K_X|$ and $|-6K_X|$ are not composed with the same pencil, then take $m_1=17$ and $\mu_{0}'=m_0=6$. Then by Theorem~\ref{thm bc}(2), $\varphi_{-m}$ is birational for all $m \geq 51$.

For No.~14 of Table~\ref{tabA}, $r_{X}=210, -K_{X}^{3}=\frac{17}{210}$, $P_{-5}=2$, and $r_{\max}=7$. So we can take $m_0=5$. By Corollary~\ref{np2 cor} (with $l=1, t=4.2$), $P_{-10}-1>10$.
If $|-10K_X|$ and $|-5K_X|$ are composed with the same pencil, then take $\mu_0'=\frac{10}{P_{-10}-1}<1$ by Remark~\ref{rem mu0}.
By Proposition~\ref{np2}(2) (with $t=12$), we can take $m_{1}=30$.
Then by Theorem~\ref{thm bc}(2), $\varphi_{-m}$ is birational for all $m \geq 51$.
If $|-10K_X|$ and $|-5K_X|$ are not composed with the same pencil, then take $m_1=10$ and $\mu_{0}'=m_0=5$. Then by Theorem~\ref{thm bc}(2), $\varphi_{-m}$ is birational for all $m \geq 29$.

For No.~15 of Table~\ref{tabA}, $r_{X}=120, -K_{X}^{3}=\frac{3}{40}$, $P_{-5}=2$, and $r_{\max }=8$. So we can take $m_0=5$. By Corollary~\ref{np2 cor} (with $l=1, t=4$), $P_{-11}-1>11$.
If $|-11K_X|$ and $|-5K_X|$ are composed with the same pencil, then take $\mu_0'=\frac{11}{P_{-11}-1}<1$ by Remark~\ref{rem mu0}.
By Proposition~\ref{np2}(1) (with $t=10$), we can take $m_{1}=27$.
Then by Theorem~\ref{thm bc}(2), $\varphi_{-m}$ is birational for all $m \geq 46$.
If $|-11K_X|$ and $|-5K_X|$ are not composed with the same pencil, then take $m_1=11$ and $\mu_{0}'=m_0=5$. Then by Theorem~\ref{thm bc}(2), $\varphi_{-m}$ is birational for all $m \geq 32$.
\end{proof}
 
\subsection{The case $P_{-2}>0$ and $P_{-1}=0$}

\begin{lem}\label{lem:5.3}
Let $X$ be a terminal weak $\bQ$-Fano $3$-fold. If $P_{-1}=0$ and $P_{-2}>0$, then
\begin{align}
 \gamma(B_X)\geq 0, \quad 2\in\mathcal{R}_X, \quad \sigma(B_X) \geq11. \label{condition 014}
\end{align}
\end{lem}

\begin{proof}
By \eqref{sigma}, we have $\sigma(B_X)=10-5P_{-1}+P_{-2}=10+P_{-2}\geq11$.
Other statements follow from \eqref{gamma>0} and Proposition~\ref{known}(4). 
\end{proof}

\begin{thm}\label{case2}
Let $X$ be a terminal weak $\bQ$-Fano $3$-fold. If $P_{-2}>0$, $P_{-1}=0$, and $r_{\max}\geq14$, then $\varphi_{-m}$ is birational for all $m\geq59$.
Moreover, $\varphi_{-58}$ may not be birational only if $B_X=\{(1, 2), 2\times (1, 3), (8, 17)\}$ and $|-24K_X|$ is composed with a pencil.
\end{thm}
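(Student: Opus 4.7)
The plan is to imitate the structure of Theorems~\ref{case0} and~\ref{case1}. The conditions \eqref{condition 014} together with $r_{\max}\geq 14$ restrict the possible baskets $B_X$ to a finite explicit list: Lemma~\ref{3/2bi} gives $\sum_i(r_i-\tfrac{1}{r_i})\geq\tfrac{33}{2}$, while $\gamma(B_X)\geq 0$ gives $\sum_i(r_i-\tfrac{1}{r_i})\leq 24$, and the requirements that $2\in\mathcal{R}_X$ and some $r_i\geq 14$ cut the list down to a small enumeration. For each basket on the list, I would compute $r_X=\lcm\{r_i\}$, $\sigma'(B_X)$, and thus $-K_X^3=\sigma(B_X)-\sigma'(B_X)-6$ via \eqref{volume} using $P_{-1}=0$.

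For each basket, the strategy is to choose $m_0$ as small as possible with $P_{-m_0}\geq 2$ (valid choices include $m_0=6$ by Proposition~\ref{known}(2), and sometimes smaller by direct computation from \eqref{RR for P}), set $\nu_0=2$, and apply Proposition~\ref{np2} or Corollary~\ref{np2 cor} using whichever of Proposition~\ref{np1} or Theorem~\ref{npthm} is sharper (the former when $r_X(-K_X^3)<12$, the latter otherwise) to locate the smallest integer $m^*$ guaranteeing that $|-m^*K_X|$ is not composed with a pencil. I would then split into two subcases: if $|-m^*K_X|$ and $|-m_0K_X|$ are not composed with the same pencil, take $m_1=m^*$, $\mu'_0=m_0$ and apply Theorem~\ref{thm bc} or Corollary~\ref{usage of thm bc 2} (using $N_0\geq\lceil r_X/(m_1\nu_0 r_{\max})\rceil$ from Lemma~\ref{N_0}); if they are composed with the same pencil, then Remark~\ref{rem mu0} yields the much smaller $\mu'_0=m^*/(P_{-m^*}-1)$, and I would search for the next non-pencil index as $m_1$. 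For all baskets other than the delicate one below, this routinely yields birationality for $m\geq 58$.

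The main obstacle is the basket $B_X=\{(1,2),\,2\times(1,3),\,(8,17)\}$, for which $r_X=102$, $r_{\max}=17$ and $-K_X^3=\tfrac{7}{102}$, so $r_X(-K_X^3)=7$. A direct Riemann--Roch calculation via \eqref{RR for P} gives $P_{-24}-1=168=7\cdot 24$ exactly, so Proposition~\ref{np1} guarantees $|-mK_X|$ not composed with a pencil only starting from $m=25$, while Theorem~\ref{npthm} provides no improvement here since $r_X(-K_X^3)=7<12$. If $|-24K_X|$ is not composed with the same pencil as $|-m_0K_X|$ (in particular, if $|-24K_X|$ itself is not composed with any pencil), one can take $m_1=24$ with an appropriate $m_0$ and verify via Theorem~\ref{thm bc} that $m\geq 58$ suffices. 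If however $|-24K_X|$ is composed with the same pencil as $|-m_0K_X|$, then Remark~\ref{rem mu0} provides $\mu'_0=\tfrac{24}{P_{-24}-1}=\tfrac{1}{7}$ but one is forced to take $m_1=25$; Theorem~\ref{thm bc}(2) with these parameters yields $m\geq\max\{m_0+25+6,\,\lfloor\tfrac{5}{3}\cdot\tfrac{1}{7}+\tfrac{125}{3}\rfloor,\,0+25+2r_{\max}\}=59$, which gives the weaker bound and explains exactly the residual situation singled out in the ``only if'' clause of the statement.
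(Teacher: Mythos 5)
Your overall strategy --- enumerate the baskets allowed by \eqref{condition 014} together with $r_{\max}\geq 14$, then run the non-pencil algorithm and the birationality criteria basket by basket --- is exactly the paper's, and you correctly isolate the exceptional basket $B_X=\{(1, 2), 2\times (1, 3), (8, 17)\}$ with $r_X(-K_X^3)=7$ and $P_{-24}=7\cdot 24+1$. However, your treatment of that basket has a genuine gap in the first branch of your dichotomy. If $|-24K_X|$ is not composed with the same pencil as $|-m_0K_X|$, you take $m_1=24$ and $\mu_0'=m_0$; but for this basket the smallest admissible $m_0$ is $4$ (one checks $P_{-2}=P_{-3}=1$, $P_{-4}=2$), so $\mu_0'=4$, and with $r_{\max}=17$ and $\nu_0=2$ none of the criteria reaches $58$: Theorem~\ref{thm bc}(2) gives $\lfloor\mu_0'\rfloor+m_1+2r_{\max}=4+24+34=62$, Theorem~\ref{thm bc}(1) gives $\lfloor 3\mu_0'\rfloor+3m_1=84$, and Theorem~\ref{thm bc}(3) and Corollary~\ref{usage of thm bc 2} are worse. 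So this branch yields only $m\geq 62$, which does not even establish the main bound $59$, let alone the refined claim about $\varphi_{-58}$.

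The underlying problem is that your dichotomy sits at an index ($24$) where the ``not the same pencil'' alternative is too expensive; relatedly, your general recipe dichotomizes at the smallest $m^*$ for which non-pencil is \emph{guaranteed}, but then the ``same pencil'' alternative is vacuous and no small $\mu_0'$ is ever extracted. The paper instead first applies Corollary~\ref{np2 cor} (with $l=1$, $t=2$) to get $P_{-13}-1>13$ and dichotomizes there: if $|-13K_X|$ and $|-4K_X|$ are not composed with the same pencil, then $m_1=13$ and $\mu_0'=4$ already give $m\geq 51$; if they are, Remark~\ref{rem mu0} yields $\mu_0'=\frac{13}{P_{-13}-1}<1$ \emph{before} one is forced up to $m_1=25$ (resp.\ $24$ when $|-24K_X|$ is not composed with a pencil), and then $\lfloor\mu_0'\rfloor+m_1+2r_{\max}=0+25+34=59$ (resp.\ $58$). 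Two smaller points: the paper secures $m_0=4$ for all $r_{\max}\geq 14$ by quoting \cite{CJ16} to get $P_{-4}\geq 2$, and your default $m_0=6$ would also break basket No.~2 of Table~\ref{tab1} (where $r_{\max}=21$ forces one to use $m_0=\mu_0'=2$), although you do allow for smaller $m_0$ in principle.
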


\begin{proof}
Keep the setting in Notation~\ref{set up}.
By \cite[Case~II of Proof of Theorem~3.12]{CJ16} (especially the last paragraph of Subsubcase II-3-iii) or the second paragraph of \cite[Case~IV of Proof of Theorem~1.8]{CJ16} (see Table~\ref{tabB} in Appendix), we can see that $r_{\max}\leq 13$ provided $P_{-4}=1$. Hence by assumption, $P_{-4}\geq2$ and we can always take $m_0=4$. 

\medskip

{\bf Case 1}. $ r_{\max}\geq 16$.

It is not hard to search by hands or with the help of a computer program to get all possible $B_X$ satisfying \eqref{condition 014} and $r_{\max}\geq 16$. Here note that $\sigma(B_X) \geq11$ implies that $\sum_i r_i> 2\sigma(B_X)\geq 22.$

If $22\leq r_{\max}\leq 24$, then there is no $B_X$ satisfying \eqref{condition 014}.
If $16\leq r_{\max}\leq 21$, then all possible $B_X$ satisfying \eqref{condition 014} are listed in Table~\ref{tab1}.

Here we explain briefly how to get Table~\ref{tab1}. The algorithm is the follwoing: first we can list all possible $\mathcal{R}_X$ satisfying $2\in \mathcal{R}_X$ and $\gamma(B_X)\geq 0$; then we find all possible $b_i$ for those $\mathcal{R}_X$ such that $\sigma(B_X)\geq 11$.
For example, let us consider the case $r_{\max}=17$. As $2\in \mathcal{R}_X$, $\{2, 17\}\subset \mathcal{R}_X$. So we can list all possible $\mathcal{R}_X$ with $\gamma(B_X)\geq 0$ by enumeration method by considering the second largest $r_i$:
\begin{center}
\begin{tabular}{ l l l }
 \{2, 5, 17\}; &\{2, 2, 4, 17\}; & \{2, 4, 17\};\\
\{2, 3, 3, 17\}; &\{2, 2, 3, 17\}; &\{2, 3, 17\};\\
\{2, 2, 2, 2, 17\}; &\{2, 2, 2, 17\}; &\{2, 2, 17\};\\ 
\{2, 17\}. &&   
\end{tabular}
\end{center}
Then all possible $B_X$ with $\sigma(B_X)\geq 11$ are listed in Table~\ref{tab1}; for instance, there is no  such basket $B_X$ with $\mathcal{R}_X=\{2,4, 17\}$ because in this case $\sigma(B_X)\leq 1+1+8=10.$

{\footnotesize
\begin{longtable}{CLC}
\caption{Baskets satisfying  Lemma~\ref{lem:5.3} with $r_{\max}\geq 16$}\label{tab1}\\
\hline
\text{No.} & B_X & -K^3 \\
\hline
\endfirsthead
\multicolumn{3}{l}{{ {\bf \tablename\ \thetable{}} \textrm{-- continued from previous page}}}
\\
\hline 
\text{No.} & B_X & -K^3 \\
\endhead
\hline
\hline \multicolumn{3}{r}{{\textrm{Continued on next page}}} \\ \hline
\endfoot

\hline \hline
\endlastfoot

1& \{(1, 2), (10, 21)\} & <0\\
2& \{2\times (1, 2), (10, 21)\} & 5/21\\
3& \{2\times (1, 2), (9, 20)\} & <0\\
4& \{2\times (1, 2), (9, 19)\} & <0\\
5& \{ (1, 2), (1, 3), (9, 19)\} & <0\\
6& \{3\times (1, 2), (9, 19)\} & 9/38\\
7& \{3\times (1, 2), (8, 19)\} & 5/38\\
8& \{4\times (1, 2), (7, 18)\} & 5/18\\
9& \{(1, 2), (2, 5), (8, 17)\} & <0\\
10& \{3\times (1, 2), (8, 17)\} & <0\\
11& \{2\times (1, 2), (1, 3), (8, 17)\} & <0\\
12& \{2\times (1, 2), (1, 4), (8, 17)\} & <0\\
13& \{(1, 2), 2\times (1, 3), (8, 17)\} & 7/102\\
14& \{4\times (1, 2), (8, 17)\} & 4/17\\
15& \{4\times (1, 2), (7, 17)\} & 2/17\\
16& \{2\times (1, 2), (2, 5), (7, 16)\} & 11/80\\
17& \{4\times (1, 2), (7, 16)\} & <0\\
18& \{3\times (1, 2), (1, 3), (7, 16)\} & 5/48\\
19& \{5\times (1, 2), (7, 16)\} & 7/16\\
\hline
\end{longtable}
}

For No.~2 of Table~\ref{tab1}, $-K_X^3=\frac{5}{21}$ and in this case $P_{-2}=2$, $r_X=42$, $r_{\max}=21$. We can take $\mu_{0}'=m_0=2$. By Proposition~\ref{np2}(1) (with $t=2.28$), we can take $m_1=16$. Then by Theorem~\ref{thm bc}(1), $\varphi_{-m}$ is birational for all $m \geq 54$.

For other cases with $-K_X^3>0$, we have $-K_X^3\geq\frac{7}{102}$ and $r_{\max}\leq19$.
By Corollary~\ref{np2 cor} (with $l=1$, $t=2$), $P_{-13}-1>13$. If $|-13K_X|$ and $|-4K_X|$ are not composed with the same pencil, then take $m_1=13$ and $\mu_{0}'=m_0=4$. Then by Theorem~\ref{thm bc}(1), $\varphi_{-m}$ is birational for all $m\geq51$. 

So we may assume that $|-13K_X|$ and $|-4K_X|$ are composed with the same pencil. Then in the following we can take $\mu_0'=\frac{13}{P_{-13}-1}<1$ by Remark~\ref{rem mu0} and $m_0=4$. 
 
For No.~6-8 of Table~\ref{tab1}, $-K_X^3\geq\frac{5}{38}$, $r_X\leq 38$, $r_{\max}\leq 19$. By Proposition~\ref{np2}(1) (with $t=2.4$), we can take $m_1=16$. Then by Theorem~\ref{thm bc}(1), $\varphi_{-m}$ is birational for all $m \geq 50$.

For No.~14-16 and No.~18-19 of Table~\ref{tab1}, $-K_X^3\geq\frac{5}{48}$, $r_X\leq 80$, $r_{\max}\leq 17$. By Proposition~\ref{np2}(1) (with $t=3.88$), we can take $m_1=22$. Then by Theorem~\ref{thm bc}(2), $\varphi_{-m}$ is birational for all $m \geq 56$.

For No.~13 of Table~\ref{tab1}, $-K_X^3=\frac{7}{102}$, $r_X=102$, $r_{\max}=17$. By Proposition~\ref{np2}(1) (with $t=3.6$), we can take $m_1=25$. Then by Theorem~\ref{thm bc}(2), $\varphi_{-m}$ is birational for all $m \geq 59$. Moreover, if $|-24K_X|$ is not composed with a pencil, then take $m_1=24$. Then by Theorem~\ref{thm bc}(2), $\varphi_{-m}$ is birational for all $m \geq 58$.

\medskip

{\bf Case 2}. $ 14\leq r_{\max}\leq 15$.

For the remaining cases $14\leq r_{\max}\leq15$, 
we have $-K_X^3\geq\frac{1}{30}$ by Proposition~\ref{known}(3) as $P_{-4}\geq 2$. 
By Corollary~\ref{np2 cor} (with $l=1, t=3.4$), $P_{-17}-1>17$.

If $|-17K_X|$ and $|-4K_X|$ are not composed with the same pencil, then take $m_1=17$ and $\mu_0'=m_0=4$. Then by Theorem~\ref{thm bc}(2), $\varphi_{-m}$ is birational for all $m\geq51$.

If $|-17K_X|$ and $|-4K_X|$ are composed with the same pencil, then take $\mu_0'=\frac{17}{P_{-17}-1}<1$ by Remark~\ref{rem mu0} and $m_0=4$.

If $r_{\max}=15$, then we claim that $r_X\leq 60$. In fact, as $\{15, 2\}\subset \mathcal{R}_X$, by \eqref{gamma>0}, $s\not \in \mathcal{R}_X$ for all $s>7$. If $7\not\in \mathcal{R}_X$, then $r_X$ divides $60$. If $7\in \mathcal{R}_X$, then $\mathcal{R}_X=\{15, 7, 2\}$ by \eqref{gamma>0}, and moreover $B_X=\{(1, 2), (3, 7), (7, 15)\}$ by $\sigma(B_X)\geq 11$. But this basket has $-K_X^3<0$ by \eqref{volume}, which is absurd. Hence $r_X\leq60$. By Proposition~\ref{np2}(1) (with $t=4$, $-K_X^3\geq\frac{1}{30}$), we can take $m_1=21$. 
Then by Theorem~\ref{thm bc}(2), $\varphi_{-m}$ is birational for all $m\geq51$.

If $r_{\max}=14$, then we claim that $B_X=\{6\times (1, 2), (5, 14)\}$. Suppose that $B_X=\{(b_1, r_1), \dots, (b_k, r_k), (b, 14)\}$ with $b\in\{1,3,5\}$. 
Then $\sigma(B_X)\geq11$ implies that $\sum_{i=1}^kb_i\geq 6$. If there exists some $r_i>2$, 
then $\sum_{i=1}^kr_i>2\sum_{i=1}^kb_i\geq 12$, that is, $\sum_{i=1}^kr_i\geq 13$. So \eqref{gamma>0} implies that $\sum_{i=1}^k\frac{1}{r_i}\geq 3-\frac{1}{14}$.
On the other hand, \eqref{gamma>0} implies that $\frac{3}{2}k+14-\frac{1}{14}\leq 24$, which says that $k\leq 6.$ So $\sum_{i=1}^k\frac{1}{r_i}\leq 5\times \frac{1}{2}+\frac{1}{3}<3-\frac{1}{14}$, a contradiction. So all $r_i= 2$ and $k\geq 6$. Then $\gamma(B_X)\geq0$ implies that $k=6$, and $\sigma(B_X)\geq 11$ implies that $b=5$. We conclude that $B_X=\{6\times (1, 2), (5, 14)\}$.
In this case, $-K_X^3=\frac{3}{14}$, $r_X=r_{\max}=14$. By Proposition~\ref{np2}(1) (with $t=2$), we can take $m_1=10$.
Then by Theorem~\ref{thm bc}(1), $\varphi_{-m}$ is birational for all $m\geq32$.

Combining all above cases, we have proved the theorem.
\end{proof}

\begin{lem}[{cf. \cite[Case I of Proof of Theorem~4.4]{CC08}}]\label{1/12}
Let $X$ be a terminal weak $\bQ$-Fano $3$-fold with $P_{-1}=0$ and $P_{-2}>0$. If $P_{-4}\geq2$ and $-K_X^3<\frac{1}{12}$, then $B_X$ is dominated by one of the following initial baskets:
\begin{align*}
 {}&\{8\times(1, 2), 3\times(1, 3)\};\\
 {}&\{9\times(1, 2), (1, 4), (1, 5)\};\\
 {}&\{9\times(1, 2), (1, 4), (1, 6)\};\\
 {}&\{9\times(1, 2), 2\times (1, 5)\}.
\end{align*}
Note that in the latter $3$ cases, all possible packings have $r_{\max }\leq9$.
\end{lem}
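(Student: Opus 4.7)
The plan is to combine Reid's Riemann--Roch formula with Chen--Chen's basket packing framework. Since $P_{-1}=0$, formula \eqref{volume} gives $-K_X^3=\sigma(B_X)-\sigma'(B_X)-6$, so the hypothesis $-K_X^3<\frac{1}{12}$ translates to $\sigma(B_X)-\sigma'(B_X)<\frac{73}{12}$. A direct computation shows that a prime packing $(b_1,r_1),(b_2,r_2)\mapsto(b_1+b_2,r_1+r_2)$ preserves $\sigma$ while strictly decreasing $\sigma'$ by $\frac{1}{r_1r_2(r_1+r_2)}$; consequently $\sigma-\sigma'$ increases under packing, so
\[
\sigma(B_X^{(0)})-\sigma'(B_X^{(0)})\leq \sigma(B_X)-\sigma'(B_X)<\frac{73}{12}.
\]

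Writing $B_X^{(0)}=\{n_{1,r}^{0}\times(1,r)\}$ and using the elementary bound $\frac{r-1}{r}\geq\frac{4}{5}$ for $r\geq 5$, I will estimate
\[
\sigma(B_X^{(0)})-\sigma'(B_X^{(0)})=\sum_{r\geq 2} n_{1,r}^{0}\cdot\frac{r-1}{r}\geq\frac{n_{1,2}^0}{2}+\frac{2n_{1,3}^0}{3}+\frac{3n_{1,4}^0}{4}+\frac{4\sigma_5}{5}.
\]
Substituting the Chen--Chen formulas \eqref{n12}--\eqref{n14} (with $P_{-1}=0$) into the right-hand side and clearing denominators by $60$ turns the resulting inequality into a short linear constraint of the form $5(P_{-4}-P_{-2})+3\sigma_5<C$ with a small explicit constant $C$, while the non-negativity $n_{1,4}^{0}\geq 0$ gives $P_{-4}-P_{-2}\geq 2P_{-3}+\sigma_5-1$.

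Combining these two inequalities with the hypotheses $P_{-2}\geq 1$, $P_{-4}\geq 2$, and the non-negativity of $n_{1,3}^{0}$ restricts $(P_{-3},\sigma_5)$ to a short finite list of admissible pairs. In each case, non-negativity of $n_{1,3}^{0}$ together with $P_{-2}\geq 1$ forces $P_{-2}=1$ and determines $P_{-4}$, hence pins down the multiplicities $n_{1,2}^0, n_{1,3}^0, n_{1,4}^0$ and $\sigma_5$ completely. A further analysis using the \emph{exact} contribution $\sum_{r\geq 5}(r-1)n_{1,r}^{0}/r$ (instead of the loose lower bound $\frac{4\sigma_5}{5}$) then restricts the admissible indices $r\geq 5$ to $r\in\{5,6\}$, producing precisely the four initial baskets in the list.

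Finally, for the latter three baskets, I will enumerate prime packings explicitly. The distinct indices appearing are $\{2,4,5\}$, $\{2,4,6\}$, and $\{2,5\}$ respectively; since $|r_1-r_2|\geq 2$ for every pair except $(4,5)$, the only admissible prime packing is $(1,4)+(1,5)\to(2,9)$, so every basket dominated by such an initial basket has $r_{\max}\leq 9$. The main obstacle will be bookkeeping: the loose bound $\frac{4\sigma_5}{5}$ is sufficient for the coarse case split on $(P_{-3},\sigma_5)$, but pinning down the admissible indices $r\in\{5,6\}$ and handling the boundary case $\sigma_5=2$ requires the sharper exact inequality, and these cases must be tracked carefully so as neither to lose admissible baskets nor to produce spurious ones.
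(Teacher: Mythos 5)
Your overall strategy is sound and genuinely different from the paper's: the paper outsources the main reduction to the classification in \cite[Case I of Proof of Theorem~4.4]{CC08} and only computes the subcase $(P_{-3},P_{-4})=(0,2)$ directly, whereas you derive everything from \eqref{volume} and \eqref{n12}--\eqref{n14}. Your key inequality is correct: a prime packing preserves $\sigma$ and decreases $\sigma'$ by exactly $\frac{1}{r_1r_2(r_1+r_2)}$, so $\sigma(B_X^{(0)})-\sigma'(B_X^{(0)})\leq -K_X^3+6<\frac{73}{12}$, and substituting \eqref{n12}--\eqref{n14} into your lower bound does yield $5(P_{-4}-P_{-2})+3\sigma_5<10$.

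However, the case analysis as you describe it does not close. In the subcase $(P_{-3},\sigma_5)=(1,0)$ the constraint $n^0_{1,3}=6-3P_{-2}\geq 0$ does \emph{not} force $P_{-2}=1$: the value $P_{-2}=2$ survives and produces $B_X^{(0)}=\{12\times(1,2)\}$; and the subcase $(P_{-3},\sigma_5)=(0,0)$ produces $B_X^{(0)}=\{9\times(1,2),2\times(1,4)\}$. Neither basket is in the lemma's list, yet both satisfy every inequality you impose, so they must be excluded by a separate argument --- for instance, neither admits any prime packing (no pair satisfies $b_1r_2-b_2r_1=1$), hence $B_X=B_X^{(0)}$ and $-K_X^3=\sigma-\sigma'-6=0$, contradicting bigness of $-K_X$. (The paper rules out the second basket by citing \cite{CC08}, and the first never arises because the paper starts from the reduction $(P_{-3},P_{-4})\in\{(1,2),(0,2)\}$ of \cite{CC08}.) A smaller issue: your claim of recovering ``precisely the four baskets'' is not what the computation gives. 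With the strict bound $<\frac{73}{12}$, the exact contribution $\frac{25}{4}-\frac{1}{s}$ forces $s=5$, so $\{9\times(1,2),(1,4),(1,6)\}$ (which attains exactly $\frac{73}{12}$) is excluded, and $\{9\times(1,2),2\times(1,5)\}$ has $\sigma-\sigma'=\frac{61}{10}>\frac{73}{12}$ and is already killed by your coarse inequality since $5\cdot1+3\cdot2=11$. This would only prove a sharper statement than the lemma (the paper's longer list comes from using $\gamma(B_X)\geq 0$ rather than $-K_X^3<\frac{1}{12}$ at that step), so it is harmless, but it indicates the enumeration was not actually carried out; the two rigid baskets above are the genuine gap.
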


\begin{proof}
Following \cite[Case I of Proof of Theorem~4.4]{CC08}, we only need to consider the cases $(P_{-3}, P_{-4})=(1, 2)$ or $(0, 2)$ in \cite[Subcase I-3 of Proof of Theorem~4.4]{CC08}.

If $(P_{-3}, P_{-4})=(1, 2)$, then \cite[Subcase I-3 of Proof of Theorem~4.4]{CC08} shows that $B_X$ is dominated by $\{8\times(1, 2), 3\times(1, 3)\}$. (Actually it shows moreover that $B_X$ is dominated by $\{7\times(1, 2), (2, 5), 2\times(1, 3)\}$.)

If $(P_{-3}, P_{-4})=(0, 2)$, then $P_{-1}=0$ and $P_{-2}=1$. Then by \eqref{n12}--\eqref{n14}, $n^0_{1, 2}=9$, $n^0_{1, 3}=0$, $n^0_{1, 4}+\sigma_5=2$. So $B_X$ is dominated by $\{9\times(1, 2), (1, s_1), (1, s_2)\}$ for some $s_2\geq s_1\geq 4$. The case $(s_1, s_2)=(4, 4)$ is ruled out by 
\cite[Subcase I-3 of Proof of Theorem~4.4]{CC08}. Hence we get the conclusion by \eqref{gamma>0} and \cite[Lemma~3.1]{CC08}.
\end{proof}

\begin{thm}\label{case3}
Let $X$ be a terminal weak $\bQ$-Fano $3$-fold. If $P_{-1}=0, P_{-2}>0$, and $r_{\max}\leq 13$, then $\varphi_{-m}$ is birational for all $m\geq56$.
\end{thm}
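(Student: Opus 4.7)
The plan is to follow the template established in the proofs of Theorems~\ref{case1} and \ref{case2}: produce admissible values of $(m_0,m_1,\mu_0',\nu_0)$ together with bounds on $r_X$ and $r_{\max}$, and plug them into the birationality criteria of Section~\ref{sec 3}. Under the current hypotheses we have $P_{-6}\geq 2$ by Proposition~\ref{known}(2) and $-K_X^3\geq\frac{1}{70}$ by Proposition~\ref{known}(3), improved to $-K_X^3\geq\frac{1}{30}$ when $P_{-4}\geq 2$; the natural default is $m_0=4$ if $P_{-4}\geq 2$ and $m_0=6$ otherwise. The assumption $r_{\max}\leq 13$ combined with \eqref{gamma>0} will keep $r_X$ bounded.

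The natural split is by the size of $-K_X^3$. When $-K_X^3$ is not too small, say $-K_X^3\geq\frac{1}{12}$, the argument should be routine: Corollary~\ref{np2 cor} and Proposition~\ref{np2} (with suitably chosen parameter $t$) yield a moderate $m_1$. If $|-m_1K_X|$ and $|-m_0K_X|$ are not composed with the same pencil we invoke Theorem~\ref{thm bc}(1) or (2) with $\mu_0'=m_0$; otherwise we reset $\mu_0'=m_1/(P_{-m_1}-1)<1$ via Remark~\ref{rem mu0} and re-run the non-pencil algorithm to bound a fresh $m_1$. In either sub-case the thresholds appearing in Theorem~\ref{thm bc} are expected to fall comfortably under $56$.

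The main obstacle is the small-volume regime $-K_X^3<\frac{1}{12}$. Here Lemma~\ref{1/12} lists exactly four initial baskets dominating $B_X$: three of them force $r_{\max}\leq 9$ after any packing and so fit automatically under our hypothesis, yielding tight numerical bounds directly from \eqref{volume}. The delicate one is $\{8\times(1,2),\,3\times(1,3)\}$, which admits many prime packings with $r_{\max}$ as large as $13$; for every such packed $B_X$ one computes $-K_X^3$ exactly from \eqref{volume}, determines $r_X=\lcm(\mathcal{R}_X)$, and invokes Proposition~\ref{np2}. Example~\ref{43}, explicitly labelled \emph{Case~4} of the proof, lives in this regime and already demonstrates that the new non-pencil criterion gives $m_1\leq 23$, far better than the pre-existing estimates.

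The tightest subcases will be those in which $r_X$ saturates under $r_{\max}\leq 13$ while $\mu_0'$ cannot be pushed below $1$; there one switches from Theorem~\ref{thm bc} to Corollary~\ref{usage of thm bc 2}, using Lemma~\ref{N_0} to guarantee $N_0\geq\lceil r_X/(m_1\nu_0 r_{\max})\rceil$ and choosing $\beta$ adaptively as in the proof of Corollary~\ref{usage of thm bc 2}(2), in order to shave enough off the threshold to reach $m\geq 56$. The bookkeeping of which criterion to apply in each packed basket, rather than any single conceptual step, is expected to be the most laborious part of the argument.
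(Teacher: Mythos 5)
Your plan uses the same toolkit as the paper but organizes the cases differently: you split primarily by the size of $-K_X^3$ (with Lemma~\ref{1/12} driving the small-volume regime), whereas the paper's proof splits primarily by $r_{\max}$ and uses Lemma~\ref{1/12} only in the subcase $r_{\max}=13$, relying elsewhere on bounding $r_X$ via \eqref{gamma>0}, $\sigma(B_X)\geq 11$ and Lemma~\ref{3/2bi}, and then feeding $r_X$ into Corollary~\ref{usage of thm bc 2}(2). Your decomposition is viable for the bulk of the argument: when $P_{-4}\geq 2$ and $-K_X^3\geq\frac{1}{12}$, Proposition~\ref{np2}(2) (which avoids $r_X$ entirely) with $t=6.9$ gives $m_1=30$, and Theorem~\ref{thm bc}(2) with $\mu_0'<1$ yields the threshold $\lfloor\mu_0'\rfloor+m_1+2r_{\max}\leq 0+30+26=56$; and the packings of $\{8\times(1,2),3\times(1,3)\}$ with $-K_X^3<\frac{1}{12}$ and $r_{\max}\leq 13$ form a manageable list with controlled $r_X$.

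There is, however, a genuine gap: the subcase $P_{-4}=1$ (hence $P_{-2}=1$). You assign $m_0=6$ there but your two-regime analysis does not cover it. Lemma~\ref{1/12} is stated only under the hypothesis $P_{-4}\geq 2$, so your small-volume branch is inapplicable; and since Proposition~\ref{known}(3) only gives $-K_X^3\geq\frac{1}{70}$ when $P_{-4}=1$, the "routine" large-volume branch fails as well: Proposition~\ref{np2}(2) then involves $\sqrt{72\cdot 70+\cdots}>71$, and Proposition~\ref{np2}(1) is useless without a good bound on $r_X$ (which a priori can be as large as $546$ under $r_{\max}\leq 13$ and \eqref{gamma>0}). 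The paper closes this case by importing the bound $r_X\leq 130$ from \cite[Case IV of Proof of Theorem~1.8]{CJ16} (or, equivalently, one could enumerate the initial baskets $\{9\times(1,2),(1,3),(1,r)\}$, $4\leq r\leq 7$, forced by \eqref{n12}--\eqref{n14} when $P_{-2}=1$, $P_{-3}=0$, $P_{-4}=1$); your proposal supplies neither ingredient. Beyond this, note that your write-up defers all the actual parameter choices ("expected to fall comfortably under 56"), so even in the regimes where the strategy works it remains an outline rather than a proof.
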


\begin{proof}
Keep the setting in Notation~\ref{set up}.
By Proposition~\ref{known}, $P_{-6}\geq2$ and $-K_X^3\geq\frac{1}{70}$.
We always take $\nu_0=2$.

If $P_{-4}=1$, then $P_{-2}=1$. Following the second paragraph of \cite[Case IV of Proof of Theorem~1.8]{CJ16} (see Table~\ref{tabB} in Appendix), we have $r_X\leq130$. By Corollary~\ref{np2 cor} (with $l=1, t=5.5$, $-K_X^3\geq\frac{1}{70}$), $P_{-24}-1>24$. If $|-24K_X|$ and $|-6K_X|$ are not composed with the same pencil, then take $m_1=24$ and $\mu_0'=m_0=6$. Then by Theorem~\ref{thm bc}(2), $\varphi_{-m}$ is birational for all $m\geq56$. If $|-24K_X|$ and $|-6K_X|$ are composed with the same pencil, then take $m_0=6$ and $\mu_0'=\frac{24}{P_{-24}-1}<1$ by Remark~\ref{rem mu0}.
By Proposition~\ref{np2}(1) (with $t=6.9$), we can take $m_1=30$. Then by Theorem~\ref{thm bc}(2), $\varphi_{-m}$ is birational for all $m\geq56$.

From now on, we assume that $P_{-4}\geq2$.
Note that $-K_X^3\geq\frac{1}{30}$ by Proposition~\ref{known}(3). By Corollary~\ref{np2 cor} (with $l=1, t=3.6$), $P_{-16}-1>16$. If $|-16K_X|$ and $|-4K_X|$ are not composed with the same pencil, then take $m_1=16$ and $\mu_0'=m_0=4$. Then by Theorem~\ref{thm bc}(2), $\varphi_{-m}$ is birational for all $m\geq46$. 

In the following discussions, we assume that $|-16K_X|$ and $|-4K_X|$ are composed with the same pencil. We can always take $m_0=4$ and $\mu_0'=\frac{16}{P_{-16}-1}<1$ by Remark~\ref{rem mu0}.

\medskip

{\bf Case 1}. $r_{\max}\leq6$ or $r_{\max}\in \{10, 12\}$.

If $r_{\max}\leq6$, then $r_X\leq60$. If $r_{\max}=10$ (resp. $r_{\max}=12$), then $r_X\leq 210$ (resp. $r_X\leq 84$) by \cite[Page 107]{CJ16}.
Then by Corollary~\ref{usage of thm bc 2}(2), $\varphi_{-m}$ is birational for all $m\geq 52$.

\medskip

{\bf Case 2}. $r_{\max}=7$.

If $r_{\max}=7$, then $r_X$ divides $\lcm(2, 3, 4, 5, 6, 7)=420$.
Hence either $r_X=420$ or
$r_X\leq210$. 
If $r_X=420$, then $\{4, 5, 7\}\subset\mathcal{R}_X$ and one element of $\{3, 6\}$ is in $\mathcal{R}_X$.
Suppose that $$B_X=\{(b_1, r_1), \dots, (b_k, r_k), (1, r), (1, 4), (a_5, 5), (a_7, 7)\}$$ where $r\in \{3, 6\}$, $a_5\leq 2$, $a_7\leq 3$.
Then $\sigma(B_X)\geq 11$ implies that $\sum_{i=1}^kb_i\geq 4$. Lemma~\ref{3/2bi} implies that 
\begin{align}
\gamma(B_X)\leq 24-(7-\frac{1}{7}+5-\frac{1}{5}+4-\frac{1}{4}+3-\frac{1}{3}+4\times \frac{3}{2})<0, \label{use 3/2}
\end{align}
a contradiction.
Hence $r_X\leq 210$. Then by Corollary~\ref{usage of thm bc 2}(2), $\varphi_{-m}$ is birational for all $m\geq43$.

\medskip

{\bf Case 3}. $r_{\max}=8$.

If $r_{\max}=8$, then we claim that $r_X\leq168$. 
In fact, if $r_X>168$, then $\{5, 7\}\subset \mathcal{R}_X$. Suppose that $$B_X=\{(b_1, r_1), \dots, (b_k, r_k), (a_5, 5), (a_7, 7), (a_8, 8)\}$$ where $a_5\leq 2$, $a_7\leq 3$, $a_8\leq 3$.
Then $\sigma(B_X)\geq 11$ implies that $\sum_{i=1}^kb_i\geq 3$. Similar to \eqref{use 3/2}, Lemma~\ref{3/2bi} implies that $\gamma(B_X)<0$, a contradiction. Hence $r_X\leq168$.

Then by Corollary~\ref{usage of thm bc 2}(2), $\varphi_{-m}$ is birational for all $m\geq43$.

\medskip

{\bf Case 4}. $r_{\max}=9$.

If $r_{\max}=9$, then we claim that $r_X\leq252$ or $B_X=\{2\times(1, 2), (2, 5), (3, 7),$ $(4, 9)\}$. 

If $7$ and $8$ are not in $\mathcal{R}_X$, then $r_X\leq 180$. 

If $8\in \mathcal{R}_X$, then as $\{2, 8, 9\}\subset \mathcal{R}_X$, we know that $6$ and $7$ are not in $\mathcal{R}_X$ by $\gamma(B_X)\geq 0$. If $5\not \in \mathcal{R}_X$, then $r_X= 72$. If $5 \in \mathcal{R}_X$, then $ \mathcal{R}_X=\{2, 5, 8, 9\}$ as $\gamma(B_X)\geq 0$, but in this case $\sigma(B_X)\leq 10$, a contradiction.

If $7\in \mathcal{R}_X$, then as $\{2, 7, 9\}\subset \mathcal{R}_X$, we know that at most one element of $\{4, 5, 6\}$ is in $\mathcal{R}_X$ by $\gamma(B_X)\geq 0$. If $5\not \in \mathcal{R}_X$, then $r_X\leq 252$. If $5 \in \mathcal{R}_X$, then by $\sigma(B_X)\geq 11$ and $\gamma(B_X)\geq 0$, it is not hard to check that the only possible basket is $B_X=\{2\times(1, 2), (2, 5), (3, 7), (4, 9)\}$. This concludes the claim.

If $r_X\leq252$, then by Corollary~\ref{usage of thm bc 2}(2), $\varphi_{-m}$ is birational for all $m\geq50$.

If $B_X=\{2\times(1, 2), (2, 5), (3, 7), (4, 9)\}$, then $-K_X^3=\frac{43}{315}$. By Proposition~\ref{np2}(2) (with $t=7.6$), we can take $m_1=23$. Then by Theorem~\ref{thm bc}(2), $\varphi_{-m}$ is birational for all $m\geq41$.

\medskip

{\bf Case 5}. $r_{\max}=11$.

If $r_{\max}=11$, then we claim that $r_X\leq 264$ or $B_X=\{3\times(1, 2), (1, 3), (2, 5),$ $(5, 11)\}$ or $B_X=\{2\times(1, 2), (1, 3), (3, 7), (5, 11)\}$.

As $\{2, 11\}\subset \mathcal{R}_X$, we know that at most one element of $\{6, 7, 8, 9, 10\}$ is in $\mathcal{R}_X$ by $\gamma(B_X)\geq 0$.
If $10\in \mathcal{R}_X$, then $r_X= 110$ by \cite[Page 107]{CJ16}. If $9\in \mathcal{R}_X$ or $8\in \mathcal{R}_X$, then $r_X\leq 264$ by \cite[Page 107]{CJ16}. 
If $7\in \mathcal{R}_X$, then $5\not \in\mathcal{R}_X$ by $\gamma(B_X)\geq 0$. 
So either $r_X=154$ or at least one element of $\{3, 4\}$ is in $\mathcal{R}_X$. For the latter case, it is not hard to check that the only basket satisfying $\sigma(B_X)\geq 11$ and $\gamma(B_X)\geq 0$ is $B_X=\{2\times(1, 2), (1, 3), (3, 7), (5, 11)\}$.
If $6\in \mathcal{R}_X$, then we get a contradiction by Lemma~\ref{3/2bi} as \eqref{use 3/2}.

If none element of $\{6, 7, 8, 9, 10\}$ is in $\mathcal{R}_X$, then $r_X$ divides $660$ and $r_X<660$ by \cite[Page 107]{CJ16}. So either $r_X\leq 220$ or $r_X=330$. Moreover, if $r_X=330$, then $\{2, 3, 5, 11\}\subset \mathcal{R}_X$ and $4\not \in \mathcal{R}_X$, and it is not hard to check that the only basket satisfying $\sigma(B_X)\geq 11$ and $\gamma(B_X)\geq 0$ is $B_X=\{3\times(1, 2), (1, 3), (2, 5), (5, 11)\}$. This concludes the claim.

If $r_X\leq264$, then by Corollary~\ref{usage of thm bc 2}(2), $\varphi_{-m}$ is birational for all $m\geq56$.

If $B_X=\{3\times(1, 2), (1, 3), (2, 5), (5, 11)\}$, then $-K_X^3=\frac{31}{330}$. By Proposition~\ref{np2}(2) (with $t=7.6$), we can take $m_1=28$. Then by Theorem~\ref{thm bc}(2), $\varphi_{-m}$ is birational for all $m\geq50$.

If $B_X=\{2\times(1, 2), (1, 3), (3, 7), (5, 11)\}$, then $-K_X^3=\frac{50}{462}$. By Proposition~\ref{np2}(2) (with $t=7$), we can take $m_1=26$. Then by Theorem~\ref{thm bc}(2), $\varphi_{-m}$ is birational for all $m\geq48$.

\medskip

{\bf Case 6}. $r_{\max}=13$.

If $r_{\max}=13$, then $r_X\leq390$ or $r_X=546$ by \cite[Page~107]{CJ16}. 

 If $r_X=546$, then again by \cite[Page~107]{CJ16}, $B_X=\{(1, 2), (1, 3), (3, 7), (6, 13)\}$ and $-K_X^3=\frac{61}{546}$. By Proposition~\ref{np2}(2) (with $t=6$), we can take $m_1=26$. Then by Theorem~\ref{thm bc}(2), $\varphi_{-m}$ is birational for all $m\geq52$.

If $r_X\leq390$ and $-K_X^3\geq\frac{1}{12}$, then by Proposition~\ref{np2}(2) (with $t=6.9$), we can take $m_1=30$. Then by Theorem~\ref{thm bc}(2), $\varphi_{-m}$ is birational for all $m\geq56$. 

If $r_X\leq390$ and $-K_X^3<\frac{1}{12}$, then by Lemma~\ref{1/12}, 
$B_X$ is dominated by $\{8\times(1, 2), 3\times(1, 3)\}$. As $r_{\max}=13$, this implies that $B_X$ is dominated by either $\{3\times(1, 2), (6, 13), 2\times(1, 3)\}$ or $\{6\times(1, 2), (5, 13)\}$.
So we get the following possibilities of $B_X$ by $\gamma(B_X)\geq 0$:
\begin{align*}
 {}& \{3\times(1, 2), (6, 13), 2\times(1, 3)\} &{}-K^3=5/78;\\
 {}& \{2\times(1, 2), (6, 13), (2, 5), (1, 3)\}&{}-K^3=19/195;\\
 {}& \{2\times(1, 2), (6, 13), (3, 8)\}&{}-K^3=11/104;\\
 {}& \{(1, 2), (6, 13), (3, 7), (1, 3)\}&{}-K^3=61/546;\\
 {}& \{6\times(1, 2), (5, 13)\}&{}-K^3= 1/13.
\end{align*}
In the above list, only the first and the last have $-K_X^3<\frac{1}{12}$. In particular, in these cases, $r_X\leq 78$. Then by Corollary~\ref{usage of thm bc 2}(2), $\varphi_{-m}$ is birational for all $m\geq55$.

Combining all above cases, we have proved the theorem.
\end{proof}

\subsection{The case $P_{-1}>0$}

\begin{lem}\label{P-4}
Let $X$ be a terminal weak $\bQ$-Fano $3$-fold. If $P_{-1}>0$ and $r_{\max}\geq16$, then $P_{-4}\geq2$. 
\end{lem}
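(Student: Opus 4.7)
The plan is to prove the contrapositive: suppose $P_{-1}\geq 1$ and $P_{-4}\leq 1$; I will deduce $r_{\max}\leq 15$. Since $H^{0}(X,-K_X)\neq 0$, multiplication by a nonzero section of $-K_X$ yields injections $H^{0}(X,-mK_X)\hookrightarrow H^{0}(X,-(m+1)K_X)$, so $P_{-m}$ is nondecreasing in $m$. Combined with the hypotheses, this forces $P_{-1}=P_{-2}=P_{-3}=P_{-4}=1$. Plugging into \eqref{n12}--\eqref{n14} gives $n_{1,2}^{0}=2$, $n_{1,3}^{0}=2$, and $n_{1,4}^{0}=2-\sigma_5$, so $\sigma_5\in\{0,1,2\}$ and
\[
B_X^{(0)}=\{2\times(1,2),\,2\times(1,3),\,(2-\sigma_5)\times(1,4)\}\cup\{(1,s_1),\ldots,(1,s_{\sigma_5})\}
\]
with each $s_i\geq 5$. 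In addition, \eqref{sigma} and \eqref{volume} give $\sigma(B_X)=6$ and $\sigma'(B_X)=2-(-K_X^{3})<2$.

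Next I would exploit two structural facts about prime packings: $\sum r_i$ is preserved while $\sum 1/r_i$ strictly decreases, so by \eqref{gamma>0} we have $\gamma(B_X^{(0)})\geq \gamma(B_X)\geq 0$. A direct computation of $\gamma(B_X^{(0)})$ then bounds the large indices: for $\sigma_5=1$ one obtains $s_1\leq 12$, and for $\sigma_5=2$ one obtains $s_1+s_2\leq 15$ (hence $\max(s_1,s_2)\leq 10$). Consequently
\[
\sum_{r_i\in\mathcal{R}_X}r_i=\sum_{r_i\in\mathcal{R}_X^{(0)}}r_i\leq 26
\]
in every case, and any pair $(b,r)\in B_X$ satisfies both $b^{2}/r\leq\sigma'(B_X)<2$ and $b\leq\sigma(B_X)=6$.

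Finally, I would rule out any pair $(b,r)\in B_X$ with $r\geq 16$. Such a pair must arise from a subset $S\subseteq B_X^{(0)}$ with $|S|=b$ and $\sum_{(1,r^{(i)})\in S}r^{(i)}=r$, assembled by a sequence of prime packings, each requiring $b_1r_2-b_2r_1=\pm 1$. Given the small size of each admissible $B_X^{(0)}$, I would enumerate the candidate subsets $S$ with $\sum r^{(i)}\geq 16$, $\gcd(|S|,\sum r^{(i)})=1$, and $|S|^{2}<2\sum r^{(i)}$, and check in each case that the determinant condition blocks the packing sequence: a pair $(1,s)$ with $s\geq 5$ can prime-pack with $(b',r')$ only when $sb'-r'=\pm 1$, which typically fails for the small-index partners available in $B_X^{(0)}$. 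The main obstacle lies in this finite but case-by-case verification; however, the combination of the bounds $\sum r_i\leq 26$, $\sigma(B_X)=6$, $\sigma'(B_X)<2$ and the restrictive determinant rule prunes the candidate list sharply, so the enumeration reduces to checking a short explicit list and closes the argument.
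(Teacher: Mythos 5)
Your overall strategy is sound and in fact runs parallel to the paper's: reduce to $P_{-1}=P_{-2}=P_{-3}=P_{-4}=1$, read off the initial basket from \eqref{n12}--\eqref{n14}, and use $\gamma\geq 0$ together with the fact that prime packings preserve $\sum r_i$ and decrease $\sum 1/r_i$. Your numerical bounds are all correct ($\sigma_5\leq 2$, $s_1\leq 12$ when $\sigma_5=1$, $s_1+s_2\leq 15$ when $\sigma_5=2$, $\sum r_i\leq 26$, $\sigma(B_X)=6$, $\sigma'(B_X)<2$). The difference is that the paper does not derive the initial baskets itself but cites the classification in \cite[Subsubcase II-4f of Proof of Theorem~4.4]{CC08}, which already isolates the only configuration compatible with $r_{\max}\geq 16$, namely $\{2\times(1,2),2\times(1,3),(1,s_1),(1,s_2)\}$ with the large index equal to $s_1+s_2$; then $2\cdot\tfrac32+2\cdot\tfrac83+16-\tfrac1{16}>24$ contradicts $\gamma\geq 0$ in one line. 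Your version is more self-contained, which is a genuine plus.

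However, your final step is a genuine gap as written. After the $\gamma$-bounds you still must exclude a packed element $(b,r)$ with $16\leq r\leq 26$, and the filters you list (coprimality, $b\leq\sigma(B_X)=6$, the \emph{per-element} bound $b^2/r<2$, and $\sum r_i\leq 26$) do not eliminate all candidates: for $\sigma_5=0$ the subset $\{(1,2),2\times(1,3),2\times(1,4)\}$ of $B_X^{(0)}=\{2\times(1,2),2\times(1,3),2\times(1,4)\}$ gives the candidate $(5,16)$, which passes every one of your stated tests. You would need either to actually run the determinant/packing analysis (e.g.\ observe that $(1,s)$ with $s\geq 6$ can never prime-pack with anything available, that $(1,5)$ packs only with $(1,4)$, and trace the few resulting chains -- none of which reaches index $16$), or to apply $\sigma'<2$ to the \emph{whole} basket rather than elementwise (for the candidate above, $\sigma'\geq \tfrac{25}{16}+\tfrac12>2$ already kills it). Saying the determinant condition ``typically fails'' does not close the argument; the verification is finite and does succeed, but it must be exhibited, since this is exactly the content that the paper outsources to \cite{CC08}.
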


\begin{proof}
If $P_{-4}=1$, then $P_{-2}=P_{-3}=1$. Since $r_{\max}\geq16$, by the classification in \cite[Subsubcase II-4f of Proof of Theorem~4.4]{CC08}, $B_X$ is dominated by $\{2\times(1, 2), 2\times(1, 3), (1, s_1), (1, s_2)\}$ with $s_2\geq s_1\geq5$, which means that $s_1+s_2=r_{\max}\geq16$. But in this case
$2\times \frac{3}{2}+2\times \frac{8}{3}+16-\frac{1}{16}>24$, contradicting \eqref{gamma>0} and \cite[Lemma~3.1]{CC08}. So $P_{-4}\geq2$.
\end{proof}

\begin{thm}\label{case4}
Let $X$ be a terminal weak $\bQ$-Fano $3$-fold. If $P_{-1}>0$ and $r_{\max } \geq 14$, then $\varphi_{-m}$ is birational for all $m\geq52$.
\end{thm}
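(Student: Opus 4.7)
The plan is to follow the same pattern as Theorems~\ref{case1}--\ref{case3}: enumerate all Reid baskets $B_X$ compatible with the hypotheses and, for each one, invoke the birationality criteria (Theorem~\ref{thm bc} or Corollary~\ref{usage of thm bc 2}) after bounding $m_0$, $m_1$, $\mu_0'$, $\nu_0$, $N_0$ and $r_X$. The key gain over Theorem~\ref{case2} is that $P_{-1}\geq 1$ lets us take $\nu_0=1$, which halves the critical $2\nu_0 r_{\max}$ term in Theorem~\ref{thm bc}(3); we may also assume $r_X\leq 660$ by Theorem~\ref{case0}. The first reduction is on $P_{-4}$: Lemma~\ref{P-4} guarantees $P_{-4}\geq 2$ whenever $r_{\max}\geq 16$, so the main branch takes $m_0=4$ with $a(m_0)=6$; for $14\leq r_{\max}\leq 15$ with $P_{-4}=1$, the Chen--Chen classification (cf.\ the proof of Lemma~\ref{P-4}) severely restricts $B_X$, and one falls back to $m_0=8$ via Proposition~\ref{known}(2) and handles the short resulting list directly.

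In the main branch I would organize by $r_{\max}$. From \eqref{sigma} we have $\sigma(B_X)=10-5P_{-1}+P_{-2}\leq 5+P_{-2}$, so combined with $\gamma(B_X)\geq 0$ (using \eqref{gamma>0} and Lemma~\ref{3/2bi}) the admissible $\sum b_i$ and $\sum r_i$ are tightly constrained, and only a short list of baskets arises for each value of $r_{\max}$. For each basket I would compute $-K_X^3$ via \eqref{volume}, apply Corollary~\ref{np2 cor} with suitable $t,l$ to produce some $m$ with $P_{-m}-1>m$, and then split: either $|-mK_X|$ and $|-m_0K_X|$ are composed with the same pencil (so that $\mu_0'=m/(P_{-m}-1)<1$ by Remark~\ref{rem mu0} and Proposition~\ref{np2} supplies an $m_1$), or they are not (so $m_1=m$ and $\mu_0'=m_0$). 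In either branch, Theorem~\ref{thm bc}(2)/(3) or Corollary~\ref{usage of thm bc 2}(2)---using the lower bound $N_0\geq \lceil r_X/(m_1\nu_0 r_{\max})\rceil$ from Lemma~\ref{N_0} where needed---should yield the required birationality threshold, which I expect to stay below $52$ in every case.

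The main obstacle will be the borderline baskets, i.e.\ those with $r_{\max}$ near the top of the admissible range and $-K_X^3$ small, where the bound $\lfloor\mu_0'\rfloor+m_1+2\nu_0 r_{\max}$ in Theorem~\ref{thm bc}(3) is tightest. The new non-pencil criterion Theorem~\ref{npthm} is decisive here: it forces $\mu_0'<1$ as soon as one can locate some $m$ with $P_{-m}>12m+1$, a much stronger handle than \cite[Corollary~4.2]{CJ16} once $r_X(-K_X^3)$ is large. I expect a handful of baskets to land exactly at the bound $52$, and for these a finer ``same pencil / different pencil'' dichotomy---varying the test value of $m$ in the non-pencil step, in the spirit of Theorem~\ref{case3}---will be needed to squeeze out the clean statement $m\geq 52$.
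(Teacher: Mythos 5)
Your plan reproduces the paper's architecture for this theorem exactly --- the split on $r_{\max}$, Lemma~\ref{P-4} giving $m_0=4$ for $r_{\max}\geq 16$ versus the fallback $m_0=8$ for $r_{\max}\in\{14,15\}$, the choice $\nu_0=1$, and the same two criteria --- but your execution in the middle is substantially heavier than what the paper actually does. For every subcase except $r_{\max}\in\{23,24\}$ the paper never touches individual baskets: it only bounds $r_X$ (via \eqref{gamma>0} or \cite[Page~104]{CJ16}) and applies Corollary~\ref{usage of thm bc 2}(2) with the default values $N_0=1$ and $\mu_0'=m_0$, which already yields the threshold $52$ (e.g.\ $\lfloor 8+2\cdot 15+210/15\rfloor=52$ for $r_{\max}=15$ and $\lfloor 4+2\cdot 17+240/17\rfloor=52$ for $r_{\max}=17$); no non-pencil step and no same-pencil dichotomy is needed in that range, and indeed Theorem~\ref{thm bc}(2)(3) cannot beat the $2\nu_0 r_{\max}\geq 28$ term there anyway. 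Basket-specific volume bounds and Proposition~\ref{np2} enter only for $r_{\max}=23,24$, where \eqref{gamma>0} forces $B_X$ to be a single orbifold point, \eqref{volume} gives $-K_X^3\geq 10/23$ resp.\ $23/24$, and Theorem~\ref{thm bc}(1) closes the case. One caution about your enumeration step: the inequality $\sigma(B_X)\leq 5+P_{-2}$ constrains nothing, since $P_{-2}$ is a priori unbounded; when $P_{-1}>0$ the only effective constraint is \eqref{gamma>0}, and for $r_{\max}=14$ or $15$ this still leaves several dozen admissible baskets --- which is precisely why the paper works with $r_X$ alone in that range. Your route would still terminate (the finer data can only improve the estimates, provided you reach for Corollary~\ref{usage of thm bc 2}(2) where the pencil dichotomy fails), but at considerably greater cost.
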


\begin{proof}
Keep the setting in Notation~\ref{set up}. We always take $\nu_0=1$.

If $14\leq r_{\max} \leq 15$, then $r_X\leq 210$ by \cite[Page~104]{CJ16}. By Proposition~\ref{known}(2), we can take $\mu_{0}'=m_0=8$. Then 
by Corollary~\ref{usage of thm bc 2}(2), $\varphi_{-m}$ is birational for all $m\geq 52$.

If $r_{\max}=24$, then $B_X=\{(b, 24)\}$ with $b\in\{1, 5, 7, 11\}$.
If $P_{-1}=1$, then $b=\sigma(B_X)=5+P_{-2}\geq 6$ by \eqref{sigma}, hence $b\geq 7$ and $P_{-2}\geq 2$. By \eqref{volume}, we have $-K_X^3\geq\frac{23}{24}$. 
Similarly, if $P_{-1}=2$, then $P_{-2}\geq2P_{-1}-1=3$, and thus $b\geq5$. Hence by \eqref{volume}, $-K_X^3\geq\frac{47}{24}$. 
If $P_{-1}\geq3$, then by \eqref{volume}, $-K_X^3\geq b-\frac{b^2}{24}\geq\frac{23}{24}$. In summary, $-K_X^3\geq\frac{23}{24}$ and $P_{-2}\geq2$. We can take $\mu_{0}'=m_0=2$. By Proposition~\ref{np2}(2) (with $t=1$), we can take $m_1=9$. Then by Theorem~\ref{thm bc}(1), $\varphi_{-m}$ is birational for all $m\geq33$.

In the following, we consider $16\leq r_{\max}\leq 23$. 
By Lemma~\ref{P-4}, we always take $\mu_{0}'=m_0=4$ and $\nu_0=1$.

If $r_{\max}=23$, then $B_X=\{(b, 23)\}$ with $1\leq b\leq11$.
If $P_{-1}=1$, then $b=5+P_{-2}\geq6$ and thus by \eqref{volume} $-K_X^3\geq\frac{10}{23}$. 
If $P_{-1}=2$, then $b=P_{-2}\geq2P_{-1}-1=3$, hence by \eqref{volume} $-K_X^3\geq\frac{14}{23}$. 
If $P_{-1}\geq3$, then $-K_X^3\geq b-\frac{b^2}{23}\geq\frac{22}{23}$.
In summary, $-K_X^3\geq\frac{10}{23}$. 
By Proposition~\ref{np2}(1) (with $t=\frac{36}{23}$), we can take $m_1=12$. Then by Theorem~\ref{thm bc}(1), $\varphi_{-m}$ is birational for all $m\geq48$. 

If $20\leq r_{\max} \leq22$, then by \eqref{gamma>0} we have $r_X\leq 60$. Then 
by Corollary~\ref{usage of thm bc 2}(2), $\varphi_{-m}$ is birational for all $m\geq 50$.

If $18\leq r_{\max} \leq 19$, then $r_X\leq 190$ by \cite[Page~104]{CJ16}. Then 
by Corollary~\ref{usage of thm bc 2}(2), $\varphi_{-m}$ is birational for all $m\geq 52$.

If $16\leq r_{\max} \leq 17$, then $r_X\leq 240$ by \cite[Page~104]{CJ16}. Then 
by Corollary~\ref{usage of thm bc 2}(2), $\varphi_{-m}$ is birational for all $m\geq 52$.
\end{proof}

\begin{thm}\label{case5}
Let $X$ be a terminal weak $\bQ$-Fano $3$-fold. If $P_{-1}>0$ and $r_{\max }\leq 13$, then $\varphi_{-m}$ is birational for all $m\geq58$.
\end{thm}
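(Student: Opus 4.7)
The plan is to follow the pattern of Theorems~\ref{case3} and~\ref{case4}: since $P_{-1}\geq 1$ we take $\nu_0=1$, and since Proposition~\ref{known}(2) gives $P_{-8}\geq 2$ we take $m_0=8$, so $a(m_0)=6$ and the trivial choice $\mu_0'=m_0=8$ is available by Remark~\ref{rem mu0}. With these choices and the trivial lower bound $N_0\geq 1$, Corollary~\ref{usage of thm bc 2}(2) yields birationality of $\varphi_{-m}$ for
$m\geq \lfloor 8+2r_{\max}+r_X/r_{\max}\rfloor$
whenever $r_X\leq 2r_{\max}^2$. Reaching the target $m\geq 58$ then amounts to verifying $r_X\leq r_{\max}(51-2r_{\max})$ in each case $r_{\max}\in\{2,\dots,13\}$, which in the worst case $r_{\max}=13$ demands $r_X\leq 324$.

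The bulk of the work is then a case analysis on $r_{\max}$. For each value of $r_{\max}$, one reads off a bound on $r_X$ from \cite[Page~104]{CJ16} together with \eqref{gamma>0}, Lemma~\ref{3/2bi}, and the relation \eqref{sigma}, namely $\sigma(B_X)=10-5P_{-1}+P_{-2}$ (note that since $P_{-1}\geq 1$, by monotonicity of anti-plurigenera the basket is in fact more constrained than in the $P_{-1}=0$ setting of Theorem~\ref{case3}). In most of the cases the resulting bound on $r_X$ is well below the required threshold and Corollary~\ref{usage of thm bc 2}(2) applies directly. For the smallest values of $r_{\max}$ where the condition $r_X\leq 2r_{\max}^2$ cannot hold, one instead invokes Corollary~\ref{usage of thm bc 2}(1) or Theorem~\ref{thm bc}(2), mirroring the treatment of small $r_{\max}$ in Theorem~\ref{case3}.

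The main obstacle will be a small number of sporadic baskets with $r_{\max}\in\{11,12,13\}$ for which the direct $r_X$ bound is too weak. For such outliers, the plan is to compute $-K_X^3$ explicitly from \eqref{volume}, apply Proposition~\ref{np2} to produce a small integer $m_1$ so that $|-m_1 K_X|$ and $|-8K_X|$ are not composed with the same pencil (or to shrink $\mu_0'$ substantially below $8$ via Remark~\ref{rem mu0}), and, if necessary, use Lemma~\ref{N_0} to improve $N_0$ over its trivial value. This is exactly the device that handled the exceptional baskets $\{3\times(1,2),(1,3),(2,5),(5,11)\}$, $\{2\times(1,2),(1,3),(3,7),(5,11)\}$, and $\{(1,2),(1,3),(3,7),(6,13)\}$ in Theorem~\ref{case3}, and the same extra leverage should suffice here to secure $m\geq 58$.
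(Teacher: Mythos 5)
Your overall strategy is the paper's: the same invariants ($\nu_0=1$, $m_0=8$, $\mu_0'=8$ by default, trivial $N_0\geq 1$), the same case division by $r_{\max}$, and the same rescue devices (Proposition~\ref{np2}/Corollary~\ref{np2 cor} to produce $m_1$ or shrink $\mu_0'$, Lemma~\ref{N_0} to improve $N_0$). However, as a plan it has two concrete weak points. First, you mislocate the hard cases. The outliers where neither Corollary~\ref{usage of thm bc 2} nor Theorem~\ref{thm bc} applies with the default data are precisely the baskets with $r_X$ large relative to $r_{\max}$, and these occur at \emph{small} $r_{\max}$: e.g.\ $r_X=420$ with $r_{\max}\leq 8$ (since $\lcm(8,7,6,5)=840$ and $r_X\leq 660$), $r_X\in\{504,630\}$ with $r_{\max}=9$, and $r_X=420$ with $r_{\max}\in\{10,12\}$. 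For $r_X=420$, $r_{\max}=8$, Corollary~\ref{usage of thm bc 2}(1) with $N_0=1$ gives only $\lfloor 8+\sqrt{8\cdot 420}\rfloor=65$, so your claim that Corollary~\ref{usage of thm bc 2}(1) or Theorem~\ref{thm bc}(2) applies ``directly'' for small $r_{\max}$ is false; these cases need the full $m_1$/$\mu_0'$/$N_0$ machinery (the paper gets $m_1=52$ from Proposition~\ref{np2} and then $N_0\geq 2$ from Lemma~\ref{N_0}).

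Second, for $r_{\max}=13$ your default choices cannot reach $58$ and a genuinely new step is needed. With $\mu_0'=8$ the second term of Corollary~\ref{usage of thm bc 2}(1) is $\lceil\mu_0'\rceil+4\nu_0 r_{\max}-1=59$, so that criterion is useless, while Corollary~\ref{usage of thm bc 2}(2) with $N_0=1$ requires $r_X\leq 2\cdot 13^2=338$ and gives $m\geq 58$ only for $r_X\leq 324$; but the actual bound for $r_{\max}=13$ is $r_X\leq 364$ or $r_X\in\{390,546\}$. The paper resolves this by first disposing of $P_{-4}=1$ via the basket classification of \cite[Subsubcase~II-4f of Proof of Theorem~4.4]{CC08} and then, for $P_{-4}\geq 2$, switching to $m_0=\mu_0'=4$, which drops the offending term to $4+52-1=55$ and makes Corollary~\ref{usage of thm bc 2}(1) give $57$ for all $r_X\leq 364$. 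Your plan keeps $m_0=8$ throughout and hopes to patch things only by shrinking $\mu_0'$ or raising $N_0$; with $-K_X^3$ possibly as small as $\frac{1}{330}$ the anti-plurigenera grow too slowly for Corollary~\ref{np2 cor} to force $\mu_0'\leq 4$ or to produce an $m_1\leq 24$ (which Theorem~\ref{thm bc}(3) would require, since $8+m_1+2r_{\max}\leq 58$ forces $m_1\leq 24$), so this route stalls. You need to add the $P_{-4}\geq 2$ reduction (or an equivalent device) for $r_{\max}=13$.
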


\begin{proof}
Keep the setting in Notation~\ref{set up}.
By Theorem~\ref{case0} and Proposition~\ref{known}(1), we may assume that $r_X\leq660$.
By Proposition~\ref{known}(2), we can take $m_0=8$ and $\nu_0=1$. We take $\mu_0'=8$ unless stated otherwise.

\medskip

{\bf Case 1.} $r_{\max}\leq8$.

If $r_{\max}\leq8$, then $r_{X}$ divides $\lcm(8, 7, 6, 5)=840$. As $r_{X}\leq660$, $r_X=420$ or $r_X\leq280$. 

If $r_X=420$, then by Proposition~\ref{np2}(1) (with $t=19.5$, $-K_X^3\geq\frac{1}{330}$), we can take $m_1=52$. By Lemma~\ref{N_0}, $N_0\geq\lceil\frac{420}{8m_{1}}\rceil=2$. Then by Corollary~\ref{usage of thm bc 2}(1), $\varphi_{-m}$ is birational for all $m\geq48$. 

If $r_X\leq280$, then by Corollary~\ref{usage of thm bc 2}(1), $\varphi_{-m}$ is birational for all $m\geq55$.

\medskip

{\bf Case 2.} $r_{\max}=9$.

If $r_{\max}=9$, then $r_X$ divides $2520$. As $r_X\leq 660$ and $9$ divides $r_X$, we have
$r_X\leq360$ or $r_X\in\{504, 630\}$. 

If $r_X\leq360$, then by Corollary~\ref{np2 cor} (with $l=4, t=10$, $-K_X^3\geq\frac{1}{330}$), we have $P_{-30}-1>\frac{30}{4}$. If $|-30K_X|$ and $|-8K_X|$ are composed with the same pencil, then take $\mu_0'=\frac{30}{P_{-30}-1}<4$ by Remark~\ref{rem mu0}. Then by Corollary~\ref{usage of thm bc 2}(1), $\varphi_{-m}$ is birational for all $m\geq57$. If $|-30K_X|$ and $|-8K_X|$ are not composed with the same pencil, then take $m_1=30$ and $\mu_0'=m_0=8$. Then by Theorem~\ref{thm bc}(3), $\varphi_{-m}$ is birational for all $m\geq56$.

If $r_X=630$, then $-K_X^3\geq\frac{1}{315}$ (note that $-K_X^3\geq\frac{1}{330}$ and $r_X(-K_X^3)$ is an integer). By Corollary~\ref{np2 cor} (with $l=2.8, t=11$), we have $P_{-33}-1>\frac{33}{2.8}$. 
If $|-33K_X|$ and $|-8K_X|$ are composed with the same pencil, then take $\mu_0'=\frac{33}{P_{-33}-1}<2.8$ by Remark~\ref{rem mu0}. By Proposition~\ref{np2}(1) (with $t=20$, $-K_X^3\geq\frac{1}{315}$), we can take $m_1=63$. By Lemma~\ref{N_0}, $N_0\geq\lceil\frac{630}{9m_{1}}\rceil=2$.
Then by Corollary~\ref{usage of thm bc 2}(1),
 $\varphi_{-m}$ is birational for all $m\geq52$. 
If $|-33K_X|$ and $|-8K_X|$ are not composed with the same pencil, then take $m_1=33$ and $\mu_0'=m_0=8$. By Lemma~\ref{N_0}, $N_0\geq\lceil\frac{630}{9m_{1}}\rceil=3$. Then by Corollary~\ref{usage of thm bc 2}(1),
 $\varphi_{-m}$ is birational for all $m\geq48$. 

If $r_X=504$, then $\mathcal{R}_X=\{9, 8, 7\}$ by \eqref{gamma>0}. Write $B_X=\{(a, 7), (b, 8), (c, 9)\}$, where $a\leq3, b\in\{1, 3\}$ and $c\in\{1, 2, 4\}$. If $P_{-1}\geq 2$, then by \eqref{volume}, 
\begin{align}
 -K_X^{3}{}&=2P_{-1}+\frac{a(7-a)}{7}+\frac{b(8-b)}{8}+\frac{c(9-c)}{9}-6\notag\\
 {}&\geq \frac{6}{7}+\frac{7}{8}+\frac{8}{9}-2>0.6\label{similar1}.
\end{align}
If $P_{-1}=1$, then by \eqref{volume} and \eqref{sigma},
\begin{align}
 -K_X^{3}{}&=\frac{a(7-a)}{7}+\frac{b(8-b)}{8}+\frac{c(9-c)}{9}-4>0,\notag\\
 \sigma(B_X){}&=a+b+c=5+P_{-2}\geq6.\label{similar1+}
\end{align}
So it is easy to check that $-K_X^3\geq \frac{73}{504}$ by 
considering all possible values of $(a, b, c)$.
By Proposition~\ref{np2}(2) (with $t=7.3$, $-K_X^3\geq\frac{73}{504}$), we can take $m_1=22$. By Lemma~\ref{N_0}, $N_0\geq \lceil\frac{504}{9m_{1}}\rceil=3$. Then by Corollary~\ref{usage of thm bc 2}(1), $\varphi_{-m}$ is birational for all $m\geq44$.

\medskip

{\bf Case 3.} $r_{\max}=10$.

If $r_{\max}=10$, then we claim that $r_X\leq210$ or $r_X=420$. 

By \eqref{gamma>0}, at most one element of $\{7, 8, 9\}$ is in $\mathcal{R}_X$. If $7\not \in \mathcal{R}_X$, then $r_X$ divides either $120=\lcm(10, 8, 60)$ or $180=\lcm(10, 9, 60)$. 
If $7 \in \mathcal{R}_X$, then $r_X$ divides $420=\lcm(10, 7, 60)$, so either $r_X\leq 210$ or $r_X=420$. This concludes the claim.

If $r_X\leq210$, then by Corollary~\ref{usage of thm bc 2}(1), $\varphi_{-m}$ is birational for all $m\geq48$.

If $r_X=420$, then by \eqref{gamma>0}, $\mathcal{R}_X=\{10, 7, 4, 3\}$. 
Write $B_X=\{(1, 3), (1, 4),$ $(a, 7), (b, 10)\}$, where $a\leq3$ and $b\in\{1, 3\}$. If $P_{-1}\geq 2$, then by \eqref{volume}, 
\begin{align}
 -K_X^{3}{}&=2P_{-1}+\frac{2}{3}+\frac{3}{4}+\frac{a(7-a)}{7}+\frac{b(10-b)}{10}-6\notag\\
 {}&\geq \frac{2}{3}+\frac{3}{4}+\frac{6}{7}+\frac{9}{10}-2>1. \label{similar2}
\end{align}
If $P_{-1}=1$, then by \eqref{volume} and \eqref{sigma},
\begin{align}
 -K_X^{3}{}&=\frac{2}{3}+\frac{3}{4}+\frac{a(7-a)}{7}+\frac{b(10-b)}{10}-4>0,\notag\\
 \sigma(B_X){}&=2+a+b=5+P_{-2}\geq6.\label{similar2+}
\end{align}
So it is easy to check that $-K_X^3\geq \frac{13}{420}$ by 
considering all possible values of $(a, b)$.
By Corollary~\ref{np2 cor} (with $l=1$, $t=4.8$), we have $P_{-16}-1>16$.
If $|-16K_X|$ and $|-8K_X|$ are composed with the same pencil, then take $\mu_0'=\frac{16}{P_{-16}-1}<1$ by Remark~\ref{rem mu0}. Then by Corollary~\ref{usage of thm bc 2}(1), $\varphi_{-m}$ is birational for all $m\geq58$.
If $|-16K_X|$ and $|-8K_X|$ are not composed with the same pencil, then take $m_1=16$ and $\mu_0'=m_0=8$. Then by Theorem~\ref{thm bc}(3), $\varphi_{-m}$ is birational for all $m\geq44$.

\medskip

{\bf Case 4.} $r_{\max}=11$.

If $r_{\max}=11$, then we claim that $r_X\leq330$ or $r_X\in\{385, 396, 440, 462, 660\}$.

By \eqref{gamma>0}, at most one element of $\{7, 8, 9, 10\}$ is in $\mathcal{R}_X$.
So $r_X$ divides one element of $\{1980, 1320, 4620\}$. As $r_X\leq 660$ and $11$ divides $r_X$, it is clear that $r_X\leq330$ or $r_X\in\{385, 396, 440, 462, 495, 660\}$.
Moreover, if $r_X=495$, 
then $\{11, 9, 5\}\subset \mathcal{R}_X$, which contradicts \eqref{gamma>0}. This concludes the claim.

If $r_X\leq330$, then by Corollary~\ref{np2 cor} (with $l=7.6$, $t=7.6$, $-K_X^3\geq\frac{1}{330}$), $P_{-28}-1>\frac{28}{7.6}$. If $|-28K_X|$ and $|-8K_X|$ are composed with the same pencil, then take $\mu_0'=\frac{28}{P_{-28}-1}<7.6$ by Remark~\ref{rem mu0}. Then by Corollary~\ref{usage of thm bc 2}(1), $\varphi_{-m}$ is birational for all $m\geq58$. If $|-28K_X|$ and $|-8K_X|$ are not composed with the same pencil, then take $m_1=28$ and $\mu_0'=m_0=8$. Then by Theorem~\ref{thm bc}(3), $\varphi_{-m}$ is birational for all $m\geq58$.

If $r_X=385$ (resp. $396$), then $-K_X^3\geq\frac{2}{385}$ (resp. $\geq\frac{2}{396}$). By Corollary~\ref{np2 cor} (with $l=1.5, t=9$), $P_{-33}-1>\frac{33}{1.5}$. If $|-33K_X|$ and $|-8K_X|$ are composed with the same pencil, then take $\mu_0'=\frac{33}{P_{-33}-1}<1.5$ by Remark~\ref{rem mu0}.
Then by Corollary~\ref{usage of thm bc 2}(1), $\varphi_{-m}$ is birational for all $m\geq56$ (resp. $\geq57$). 
If $|-33K_X|$ and $|-8K_X|$ are not composed with the same pencil, then take $m_1=33$ and $\mu_0'=m_0=8$. By Lemma~\ref{N_0}, $N_0\geq \lceil\frac{385}{11m_{1}}\rceil= 2$. Then by Corollary~\ref{usage of thm bc 2}(2), 
$\varphi_{-m}$ is birational for all $m\geq47$ (resp. $\geq48$).

We claim that if $r_X\in\{440, 462, 660\}$, then $-K_X^3\geq\frac{74}{462}$ or $B_X=\{(1, 2), (2, 5), (1,$
$ 3), (1, 4), (1, 11)\}$
with $-K_X^3=\frac{17}{660}$.

If $r_X=440$, then by \eqref{gamma>0}, $\mathcal{R}_X=\{11, 8, 5\}$. 
Arguing similarly as \eqref{similar1}, we get $-K_X^3>0.5$ when $P_{-1}\geq2$. Arguing similarly as \eqref{similar1+}, we get constrains for $B_X$ when $P_{-1}=1$. By \eqref{volume} and considering all the possible baskets when $P_{-1}=1$, we can see that $-K_X^3\geq \frac{97}{440}$.

If $r_X=462$, then by \eqref{gamma>0}, $\mathcal{R}_X=\{11, 7, 6\}$ or $\{11, 7, 3, 2\}$ or $\{11, 7, 3, 2, 2\}$. 
Arguing similarly as \eqref{similar2}, we get $-K_X^3>0.5$ or $-K_X^3>0.9$ or $-K_X^3>1$ when $P_{-1}\geq2$. Arguing similarly as \eqref{similar2+}, we get constrains for $B_X$ when $P_{-1}=1$. By \eqref{volume} and considering all the possible baskets when $P_{-1}=1$, we can see that
$-K_X^3\geq \frac{85}{462}$ or 
$-K_X^3\geq \frac{95}{462}$ or 
$-K_X^3\geq \frac{74}{462}$ unless
$B_X=\{2\times(1, 2), (1, 3), (2, 7), (1, 11)\}$
with $-K_X^3=\frac{1}{231}$. But the last basket has $P_{-5}=0$, which is absurd.

If $r_X=660$, then by \eqref{gamma>0}, $\mathcal{R}_X=\{11, 5, 4, 3\}$ or $\{11, 5, 4, 3, 2\}$. 
Arguing similarly as \eqref{similar2}, we get $-K_X^3>1$ or $-K_X^3>1.5$ when $P_{-1}\geq2$. Arguing similarly as \eqref{similar2+}, we get constrains for $B_X$ when $P_{-1}=1$. By \eqref{volume} and considering all the possible baskets when $P_{-1}=1$, we can see that
$-K_X^3\geq \frac{167}{660}$ or 
$-K_X^3\geq \frac{233}{660}$ unless
$B_X=\{(1, 2), (2, 5), (1, 3), (1, 4), (1, 11)\}$
with $-K_X^3=\frac{17}{660}$. 

To summarize, we conclude the claim that $-K_X^3\geq\frac{74}{462}$ or $B_X=\{(1, 2), (2,$ $5), (1, 3), (1,$ $4), (1, 11)\}$
with $-K_X^3=\frac{17}{660}$.

If $-K_X^3\geq\frac{74}{462}$, then 
by Proposition~\ref{np2}(2) (with $t=5.7$), 
we can take $m_1=21$. Then by Theorem~\ref{thm bc}(3), $\varphi_{-m}$ is birational for all $m\geq51$.

Now we consider the case $B_X=\{(1, 2), (2, 5), (1, 3), (1, 4), (1, 11)\}$ with $-K_X^3=\frac{17}{660}$. By Corollary~\ref{np2 cor} (with $l=1, t=4.8$), $P_{-18}-1>18$. If $|-18K_X|$ and $|-8K_X|$ are composed with the same pencil, then take $\mu_0'=\frac{18}{P_{-18}-1}<1$ by Remark~\ref{rem mu0}.
By Proposition~\ref{np2}(2) (with $t=14.4$), we can take $m_1=53$. 
By Lemma~\ref{N_0}, $N_0\geq \lceil\frac{660}{11m_{1}}\rceil=2$. Then by Corollary~\ref{usage of thm bc 2}(1), $\varphi_{-m}$ is birational for all $m\geq52$.
If $|-18K_X|$ and $|-8K_X|$ are not composed with the same pencil, then take $m_1=18$ and $\mu_0'=m_0=8$. By Lemma~\ref{N_0}, $N_0\geq\lceil\frac{660}{11m_{1}}\rceil=4$. Then by Corollary~\ref{usage of thm bc 2}(2), $\varphi_{-m}$ is birational for all $m\geq45$.

\medskip

{\bf Case 5.} $r_{\max}=12$.

If $r_{\max}=12$, then we claim that $r_X\leq132$ or $r_X=420$.

By \eqref{gamma>0}, at most one element of $\{6, 7, 8, 9, 10, 11\}$ is in $\mathcal{R}_X$.
So $r_X$ divides one element of $\{120, 180, 420, 660\}$. Recall that $12$ divides $r_X$, it is clear that $r_X\leq132$ or $r_X\in\{180, 420, 660\}$.
Moreover, if $r_X\in \{180, 660\}$, then $\{12, 9, 5\}\subset \mathcal{R}_X$ or $\{12, 11, 5\}\subset \mathcal{R}_X$, which contradicts \eqref{gamma>0}. This concludes the claim.

If $r_X\leq132$, then by Corollary~\ref{usage of thm bc 2}(2), $\varphi_{-m}$ is birational for all $m\geq43$.

If $r_X=420$, then by \eqref{gamma>0}, $\mathcal{R}_X=\{12, 7, 5\}$. Arguing similarly as \eqref{similar1}, we get $-K_X^3\geq\frac{241}{420}$ when $P_{-1}\geq2$. Arguing similarly as \eqref{similar1+}, we get constrains for $B_X$ when $P_{-1}=1$. By \eqref{volume} and considering all the possible baskets when $P_{-1}=1$, we can see that
$-K_X^3\geq \frac{241}{420}$. 
By Proposition~\ref{np2}(2) (with $t=2.75$), we can take $m_1=11$. Hence by Lemma~\ref{N_0}, $N_0\geq\lceil\frac{420}{12m_{1}}\rceil=4$. Then by Corollary~\ref{usage of thm bc 2}(2), $\varphi_{-m}$ is birational for all $m\geq 40$.

\medskip

{\bf Case 6.} $r_{\max}=13$.

If $r_{\max}=13$, then we claim that $r_X\leq364$ or $r_X=390$ or $r_X=546$.

By \eqref{gamma>0}, at most one element of $\{6, 7, 8, 9, 10, 11, 12\}$ is in $\mathcal{R}_X$.
So $r_X$ divides one element of $\{5460, 1560, 2340, 8580\}$. Recall that $13$ divides $r_X$ and $r_X\leq 660$, it is clear that $r_X\leq 364$ or $r_X\in\{390, 429, 455, 468,$ $520, 546, 572,$ $585\}$.
Moreover, if $r_X\in \{429, 455, 468,$ $520, 572, 585\}$, then we can see that $\mathcal{R}_X$ violates \eqref{gamma>0} by discussing the factors. For example, if $r_X=455$, then $\{13, 7, 5\}\subset \mathcal{R}_X$ which violates \eqref{gamma>0}. This concludes the claim.

If $P_{-4}=1$, then $P_{-k}=1$ for $1\leq k\leq 4$. By \cite[Subsubcase~II-4f of Proof of Theorem~4.4]{CC08}, $B_X$ is dominated by 
$$
\{2\times (1, 2), 2\times (1, 3), (1, s_1), (1, s_2)\}
$$
for some $s_2\geq s_1\geq 4$.
As $r_{\max}=13$, $(s_1, s_2)=(6, 7)$. Considering all possible packings, we get the following possibilities of $B_X$:
\begin{align*}
 {}& \{2\times (1, 2), 2\times (1, 3), (2, 13)\};\\
 {}& \{(1, 2), (2, 5), (1, 3), (2, 13)\};\\
 {}& \{(3, 7), (1, 3), (2, 13)\};\\
 {}& \{(1, 2), (3, 8), (2, 13)\};\\
 {}& \{2\times(2, 5), (2, 13)\}.
\end{align*}
Then $r_X\leq273$ or $B_X=\{(1, 2), (2, 5), (1, 3), (2, 13)\}$. 

If $r_X\leq273$, then by Corollary~\ref{usage of thm bc 2}(2), $\varphi_{-m}$ is birational for all $m\geq55$. 

If $B_X=\{(1, 2), (2, 5), (1, 3), (2, 13)\}$, then $-K_X^3=\frac{23}{390}$. By Corollary~\ref{np2 cor} (with $l=1, t=3$), $P_{-13}-1>13$. If $|-13K_X|$ and $|-8K_X|$ are composed with the same pencil, then take $\mu_0'=\frac{13}{P_{-13}-1}<1$ by Remark~\ref{rem mu0}. Then by Corollary~\ref{usage of thm bc 2}(1), $\varphi_{-m}$ is birational for all $m\geq56$. If $|-13K_X|$ and $|-8K_X|$ are not composed with the same pencil, then take $m_1=13$ and $\mu_0'=m_0=8$. Then by Lemma~\ref{N_0}, $N_0\geq\lceil\frac{390}{13m_{1}}\rceil=3$. Then by Corollary~\ref{usage of thm bc 2}(2), $\varphi_{-m}$ is birational for all $m\geq44$.

So from now on, we assume that $P_{-4}\geq 2$ and take $m_0=4$. We take $\mu_0'=4$ unless stated otherwise.

If $r_X\leq364$, then by Corollary~\ref{usage of thm bc 2}(1), $\varphi_{-m}$ is birational for all $m\geq57$.

If $r_X=546$, then by \eqref{gamma>0}, $\mathcal{R}_X=\{13, 7, 3, 2\}$. 
Arguing similarly as \eqref{similar2}, we get $-K_X^3>0.9$ when $P_{-1}\geq2$. Arguing similarly as \eqref{similar2+}, we get constrains for $B_X$ when $P_{-1}=1$. By \eqref{volume} and considering all the possible baskets when $P_{-1}=1$, we can see that
$-K_X^3\geq \frac{157}{546}$.
 By Proposition~\ref{np2}(2) (with $t=3.6$), we can take $m_1=16$. Hence by Lemma~\ref{N_0}, $N_0\geq\lceil\frac{546}{13m_{1}}\rceil=3$. Then by Corollary~\ref{usage of thm bc 2}(2), $\varphi_{-m}$ is birational for all $m\geq 44$.

If $r_X=390$, then by \eqref{gamma>0}, $\mathcal{R}_X=\{13, 6, 5\}$ or $\{13, 5, 3, 2\}$ or $\{13, 5, 3, 2, 2\}$. 
Arguing similarly as \eqref{similar2}, we get $-K_X^3>0.5$ or $-K_X^3>0.8$ or $-K_X^3>1$ when $P_{-1}\geq2$. Arguing similarly as \eqref{similar2+}, we get constrains for $B_X$ when $P_{-1}=1$. By \eqref{volume} and considering all the possible baskets when $P_{-1}=1$, we can see that
$-K_X^3\geq \frac{133}{390}$ or 
$-K_X^3\geq \frac{23}{390}$ or 
$-K_X^3\geq \frac{62}{390}$. 
By Corollary~\ref{np2 cor} (with $l=1, t=3$, $-K_X^3\geq\frac{23}{390}$), $P_{-13}-1>13$. If $|-13K_X|$ and $|-4K_X|$ are composed with the same pencil, then take $\mu_0'=\frac{13}{P_{-13}-1}<1$ by Remark~\ref{rem mu0}. Then by Corollary~\ref{usage of thm bc 2}(1), $\varphi_{-m}$ is birational for all $m\geq56$. If $|-13K_X|$ and $|-4K_X|$ are not composed with the same pencil, then take $m_1=13$ and $\mu_0'=m_0=4$. Then by Lemma~\ref{N_0}, $N_0\geq\lceil\frac{390}{13m_{1}}\rceil=3$. Then by Corollary~\ref{usage of thm bc 2}(2), $\varphi_{-m}$ is birational for all $m\geq40$.

Combining all above cases, we have proved the theorem.
\end{proof}

\subsection{Proof of main results}

\begin{proof}[Proof of Theorem~\ref{thm1}]
It follows from 
Theorems~\ref{case1}, \ref{case2}, \ref{case3}, \ref{case4}, \ref{case5}.
\end{proof}

\begin{proof}[Proof of Theorem~\ref{thm2}]
From the proof of Theorem~\ref{thm1}, $\varphi_{-58}$ may not be birational only if $B_X=\{(1, 2), 2\times (1, 3), (8, 17)\}$, $P_{-1}=0$, and $|-24K_X|$ is composed with a pencil. In this case, $r_X(-K_X^3)=7$ and $P_{-24}=169=7\times 24+1$ by \eqref{RR for P}. But this contradicts Proposition~\ref{np1 equality}.
\end{proof}

\section{Appendix}

The possible baskets with $P_{-2}=0$ are the following (cf. \cite[Theorem~3.5]{CC08}).
{\footnotesize
\begin{longtable}{CLCCCCCCC}
\caption{Baskets with $P_{-2}=0$}\label{tabA}\\
\hline
\text{No.} & B_X & -K^3 & P_{-3} & P_{-4} & P_{-5} & P_{-6} & P_{-7} & P_{-8}\\
\hline
\endfirsthead
\multicolumn{3}{l}{{ {\bf \tablename\ \thetable{}} \textrm{-- continued from previous page}}}
\\
\hline 
\text{No.} & B_X & -K^3 & P_{-3} & P_{-4} & P_{-5} & P_{-6} & P_{-7} & P_{-8} \\
\endhead
\hline
\hline \multicolumn{3}{r}{{\textrm{Continued on next page}}} \\ \hline
\endfoot

\hline \hline
\endlastfoot

1& \{2\times(1, 2), 3\times(2, 5), (1, 3), (1, 4)\} & 1/60 & 0 & 0 & 1 & 1 & 1 & 2\\
2& \{5\times(1, 2), 2\times(1, 3), (2, 7), (1, 4)\} & 1/84 & 0 & 1 & 0 & 1 & 1 & 2\\
3& \{5\times(1, 2), 2\times(1, 3), (3, 11)\} & 1/66 & 0 & 1 & 0 & 1 & 1 & 2\\
4& \{5\times(1, 2), (1, 3), (3, 10), (1, 4)\} & 1/60 & 0 & 1 & 0 & 1 & 1 & 2\\
5& \{5\times(1, 2), (1, 3), 2\times(2, 7)\} & 1/42 & 0 & 1 & 0 & 1 & 2 & 3\\
6& \{4\times(1, 2), (2, 5), 2\times(1, 3), 2\times(1, 4)\} & 1/30 & 0 & 1 & 1 & 2 & 2 & 4\\
7& \{3\times(1, 2), (2, 5), 5 \times (1, 3)\} & 1/30 & 1 & 1 & 1 & 3 & 3 & 4\\
8& \{2 \times (1, 2), (3, 7), 5 \times (1, 3)\} & 1/21 & 1 & 1 & 1 & 3 & 4 & 5\\
9& \{(1, 2), (4, 9), 5 \times (1, 3)\} & 1/18 & 1 & 1 & 1 & 3 & 4 & 5\\
10& \{3 \times (1, 2), (3, 8), 4 \times (1, 3)\} & 1/24 & 1 & 1 & 1 & 3 & 3 & 5\\
11& \{3 \times (1, 2), (4, 11), 3 \times (1, 3)\} & 1/22 & 1 & 1 & 1 & 3 & 3 & 5\\
12& \{3 \times (1, 2), (5, 14), 2 \times (1, 3)\} & 1/21 & 1 & 1 & 1 & 3 & 3 & 5\\
13& \{2 \times (1, 2), 2 \times (2, 5), 4 \times (1, 3)\} & 1/15 & 1 & 1 & 2 & 4 & 5 & 7\\
14& \{(1, 2), (3, 7), (2, 5), 4 \times (1, 3)\} & 17/210 & 1 & 1 & 2 & 4 & 6 & 8\\
15& \{2 \times (1, 2), (2, 5), (3, 8), 3 \times (1, 3)\} & 3/40 & 1 & 1 & 2 & 4 & 5 & 8\\
16& \{2 \times (1, 2), (5, 13), 3 \times (1, 3)\} & 1/13 & 1 & 1 & 2 & 4 & 5 & 8\\
17& \{(1, 2), 3 \times (2, 5), 3 \times (1, 3)\} & 1/10 & 1 & 1 & 3 & 5 & 7 & 10\\
18& \{4 \times (1, 2), 5 \times (1, 3), (1, 4)\} & 1/12 & 1 & 2 & 2 & 5 & 6 & 9\\
19& \{4 \times (1, 2), 4 \times (1, 3), (2, 7)\} & 2/21 & 1 & 2 & 2 & 5 & 7 & 10\\
20& \{4 \times (1, 2), 3 \times (1, 3), (3, 10)\} & 1/10 & 1 & 2 & 2 & 5 & 7 & 10\\
21& \{3 \times (1, 2), (2, 5), 4 \times (1, 3), (1, 4)\} & 7/60 & 1 & 2 & 3 & 6 & 8 & 12\\
22& \{3 \times (1, 2), 7 \times (1, 3)\} & 1/6 & 2 & 3 & 4 & 9 & 12 & 17\\
23& \{2 \times (1, 2), (2, 5), 6 \times (1, 3)\} & 1/5 & 2 & 3 & 5 & 10 & 14 & 20\\
\hline
\end{longtable}
}

The possible baskets with $P_{-1}=0$ and $P_{-2}=P_{-4}=1$ are the following (cf. the second paragraph of \cite[Case~IV of Proof of Theorem~1.8]{CJ16}).
{\footnotesize
\begin{longtable}{CLC}
\caption{Baskets with $P_{-1}=0$ and $P_{-2}=P_{-4}=1$}\label{tabB}\\
\hline
\text{No.} & B_X & r_X \\
\hline
\endfirsthead
\multicolumn{3}{l}{{ {\bf \tablename\ \thetable{}} \textrm{-- continued from previous page}}}
\\
\hline 
\text{No.} & B_X & r_X \\
\endhead
\hline
\hline \multicolumn{3}{r}{{\textrm{Continued on next page}}} \\ \hline
\endfoot

\hline \hline
\endlastfoot

1& \{9\times(1, 2),(1, 3),(1, 7)\} & 42\\
2& \{8\times(1, 2),(2, 5),(1, 7)\} & 70\\
3& \{8\times(1, 2),(2, 5),(1, 6)\} & 30\\
4& \{7\times(1, 2),(3, 7),(1, 6)\} & 42\\
5& \{6\times(1, 2),(4, 9),(1, 6)\} & 18\\
6& \{7\times(1, 2),(3, 7),(1, 5)\} & 70\\
7& \{6\times(1, 2),(4, 9),(1, 5)\} & 90\\
8& \{5\times(1, 2),(5, 11),(1, 5)\} & 110\\
9& \{4\times(1, 2),(6, 13),(1, 5)\} & 130\\
\hline
\end{longtable}
}

\section*{Acknowledgments} 
The second named author would like to thank her advisor, Professor Meng Chen, for his support and encouragement. The authors are grateful to Meng Chen for discussions and suggestions. This work was supported by NSFC for Innovative Research Groups (Grant No.~12121001) and National Key Research and Development Program of China (Grant No.~2020YFA0713200). The first author is a member of LMNS, Fudan University. We would like to thank the referee for useful suggestions.


\begin{thebibliography}{99}


\bibitem{bir19} C.~Birkar, {\it Anti-pluricanonical systems on Fano varieties},
Ann. of Math. (2) 190 (2019), no. 2, 345--463.
 
\bibitem{CCCJ} J-J.~Chen, J.~A.~Chen, M.~Chen, Z.~Jiang, \emph{On quint-canonical birationality of irregular threefolds}, Proc. Lond. Math. Soc. (3) 122 (2021), no. 2, 234--258.



\bibitem{CC08} J.~A.~Chen, M.~Chen, \emph{An optimal boundedness on weak $\bQ$-Fano $3$-folds}, Adv. Math. 219 (2008), no. 6, 2086--2104.
 
\bibitem{Chen11} M.~Chen, {\em On anti-pluricanonical systems of $\mathbb{Q}$-Fano $3$-folds}, Sci. China Math. 54 (2011), no. 8, 1547--1560.

\bibitem{CJ16} M.~Chen, C.~Jiang, {\em 
On the anti-canonical geometry of $\bQ$-Fano threefolds}, J. Differential Geom. {104} (2016), no. 1, 59--109.

\bibitem{CJ20} M.~Chen, C.~Jiang, \emph{On the anti-canonical geometry of weak $\bQ$-Fano threefolds II}, Ann. Inst. Fourier (Grenoble) 70 (2020), no. 6, 2473--2542.

\bibitem{Dol12} I.~V.~Dolgachev, \emph{Classical algebraic geometry: A modern view}, Cambridge University Press, Cambridge, 2012.

\bibitem{IF00} A.~R.~Iano-Fletcher, {\it 
Working with weighted complete intersections}, Explicit birational geometry of $3$-folds, pp. 101--173, 
London Math. Soc. Lecture Note Ser., 281, Cambridge Univ. Press, Cambridge, 2000.


\bibitem{Jiang16} C.~Jiang, \emph{On birational geometry of minimal threefolds with numerically trivial canonical divisors}, Math. Ann. 365 (2016), no. 1-2, 49--76. 


\bibitem{Jiang21} C.~Jiang, \emph{Characterizing terminal Fano threefolds with the smallest anti-canonical volume}, arXiv:2109.03453v1, to appear in Birational geometry, K\"ahler--Einstein metrics and Degenerations, Moscow, Shanghai and Pohang, June–November 2019, Springer Proceedings in Mathematics \& Statistics (PROMS, volume 409).


\bibitem{Jiang22} C.~Jiang, \emph{Characterizing terminal Fano threefolds with the smallest anti-canonical volume, II}, arXiv:2207.03832v1, contributing to the special volume in honor of Professor Shokurov's seventieth birthday.


\bibitem{JZ21} C.~Jiang, Y.~Zou, \emph{An effective upper bound for anti-canonical volumes of canonical $\bQ$-Fano threefolds}, Int. Math. Res. Not., rnac100, https://doi.org/10.1093/imrn/rnac100.
 
\bibitem{KMMT} J.~Koll\'{a}r, Y.~Miyaoka, S.~Mori, H.~Takagi, \emph{Boundedness of canonical $\mathbb{Q}$-Fano $3$-folds}, Proc. Japan Acad. Ser. A Math. Sci. 76 (2000), no. 5, 73--77.
 
 
\bibitem{KM} J.~Koll\'{a}r, S.~Mori, \emph{Birational geometry of algebraic varieties}, 
Cambridge tracts in mathematics, 134, Cambridge University Press, Cambridge, 1998.

 
 
\bibitem{MZ88} M.~Miyanishi, D.-Q.~Zhang, \emph{Gorenstein log del Pezzo surfaces of rank one}, 
J. Algebra 118 (1988), no. 1, 63--84.


\bibitem{Prok05} Y.~G.~Prokhorov, 
{\em The degree of Fano threefolds with canonical Gorenstein singularities}. Mat. Sb. 196 (2005), no. 1, 81--122; translation in
Sb. Math. 196 (2005), no. 1-2, 77--114.

\bibitem{Prok07} Y.~G.~Prokhorov, 
{\em The degree of $\bQ$-Fano threefolds}, Mat. Sb. 198 (2007), no. 11, 153--174; translation in
Sb. Math. 198 (2007), no. 11-12, 1683--1702.

 
\bibitem{Prok10} Y.~G.~Prokhorov, \emph{$\bQ$-Fano threefolds of large Fano index, I}, Doc. Math. 15 (2010), 843--872.


\bibitem{Prok13} Y.~G.~Prokhorov, {\em On Fano threefolds of large Fano index and large degree}, Mat. Sb. {204} (2013), no. 3, 43--78; translation in Sb. Math. {204} (2013), no. 3-4, 347--382. 
 
\bibitem{YPG} M.~Reid, \emph{Young person's guide to canonical singularities}, Algebraic geometry, Bowdoin, 1985 (Brunswick, Maine, 1985 ), pp. 345 --414, Proc. Sympos. Pure Math., 46, Part 1, Amer. Math. Soc., Providence, RI, 1987.
 
\end{thebibliography}
\end{document}